\documentclass[12pt]{amsart}
\usepackage{}

\usepackage{amsmath}
\usepackage{amsfonts}
\usepackage{amssymb}
\usepackage[all]{xy}           
\usepackage{bbm}
\usepackage{bbding}
\usepackage{txfonts}
\usepackage{amscd}
\usepackage{tikz}
\usetikzlibrary{matrix}
\usepackage[shortlabels]{enumitem}


%
\usepackage{tikz}
\usepackage[active]{srcltx}

\newcommand{\rmnum}[1]{\romannumeral #1}
\newcommand{\Rmnum}[1]{\expandafter\@slowromancap\romannumeral #1@}

\topmargin 0cm \textheight 22cm \oddsidemargin 0cm \evensidemargin -0cm \textwidth 16.3cm

\makeatletter

\newtheorem{thm}{Theorem}[section]
\newtheorem{lem}[thm]{Lemma}

\newtheorem{pro}[thm]{Proposition}
\newtheorem{ex}[thm]{Example}
\newtheorem{rmk}[thm]{Remark}
\newtheorem{defi}[thm]{Definition}

\setlength{\baselineskip}{1.8\baselineskip}

\newcommand {\emptycomment}[1]{}

\newcommand{\be }{\begin{equation}}
\newcommand{\ee }{\end{equation}}








\newcommand{\br}[1]{   [ \cdot,    \cdot  ]   }


\newcommand\blfootnote[1]{%
  \begingroup
  \renewcommand\thefootnote{}\footnote{#1}%
  \addtocounter{footnote}{-1}%
  \endgroup
}

\begin{document}

\setlength{\baselineskip}{1.2\baselineskip}
\title[Universal enveloping $H$-pseudoalgebras of DGP pseudoalgebras]
{Universal enveloping $H$-pseudoalgebras of DGP pseudoalgebras}

\author{Ying Chen}
\address{
School of Mathematical Sciences  \\
Zhejiang Normal University\\
Jinhua 321004\\
China}
\email{yingchen@zjnu.edu.cn}


\author{Jiafeng L\"u}
\address{
School of Mathematical Sciences    \\
Zhejiang Normal University\\
Jinhua 321004              \\
China}
\email{jiafenglv@zjnu.edu.cn}

\author{Jiaqun Wei}
\address{
School of Mathematical Sciences    \\
Zhejiang Normal University\\
Jinhua 321004              \\
China}
\email{weijiaqun5479@zjnu.edu.cn}
\blfootnote{*Corresponding Author: Jiaqun Wei. Email: weijiaqun5479@zjnu.edu.cn.}

\begin{abstract}
The notions of Poisson $H$-pseudoalgebras are generalizations of Poisson algebras in a pseudotensor category $\mathcal{M}^{\ast}(H)$. This paper introduces an analogue of Poisson-Ore extension in Poisson $H$-pseudoalgebras. Poisson $H$-pseudoalgebras with the differential graded setting induces the notions of differential graded Poisson $H$-pseudoalgebras (DGP pseudoalgebras, for short). The DGP pseudoalgebra with some compatibility conditions is proved to be closed under tensor product. Furthermore, the universal enveloping $H$-pseudoalgebras of DGP pseudoalgebras are constructed by a $\mathcal{P}$-triple. A unique differential graded pseudoalgebra homomorphism between a universal enveloping $H$-pseudoalgebra of a DGP pseudoalgebra and a $\mathcal{P}$-triple of a DGP pseudoalgebra is obtained.
\end{abstract}

\subjclass[2010]{ 
16E45, 
17B35, 
17B63, 
18D35. 
 }

\keywords{Poisson-Ore extension, Poisson $H$-pseudoalgebras, differential graded, universal enveloping $H$-pseudoalgebras.}

\maketitle
\tableofcontents
\allowdisplaybreaks

\section{Introduction and Main Statements}\label{sec:intr}

This paper aims to generalize Poisson-Ore extension in \cite{Oh-2006-3}, differential graded Poisson algebras and their universal enveloping algebras in \cite{Lv-2016-4} to the setting of Poisson $H$-pseudoalgebras \cite{Wang-2023-2}. Unless otherwise specified, $\mathbf{k}$ represents a field with characteristic $0$, $\mathbb{Z}$ denotes the set of integers.

\subsection{Poisson-Ore extension}

In 1933, Ore \cite{Ore-1933-0} introduced non-commutative polynomials rings, which are known as Ore extensions nowadays. Ore extensions have remarkable applications in various algebraic structures, including Hopf algebras \cite{Ore-Hopf-2003-17}, quantized Weyl algebras \cite{Ore-Weyl-2009-18}, quantum matrix algebras \cite{Ore-quantum-2018-19}.
Oh \cite{Oh-2006-3} first proposed the Poisson polynomial extensions as a Poisson version of Ore extensions. 

\begin{defi}
$($\cite[Section 1]{Oh-2006-3}$)$ Let $(A,\cdot,\{\cdot,\cdot\}_A)$ be a Poisson algebra and $\alpha:~A\rightarrow A$ is a linear map, for all $a,b\in A$, if $\alpha$ satisfies
\begin{itemize}
\item[(i)] $\alpha(a\cdot b)=\alpha(a)\cdot b+a\cdot\alpha(b);$
\item[(ii)] $\alpha(\{a,b\}_A)=\{\alpha(a),b\}_A+\{a,\alpha(b)\}_A,$
\end{itemize} 
then $\alpha$ is called a {\bf Poisson derivation} of $(A,\cdot,\{\cdot,\cdot\}_A)$. 
If $\alpha$ only satisfies (i), then $\alpha$ is called a {\bf derivation} of $(A,\cdot,\{\cdot,\cdot\}_A)$. 
\end{defi}

\begin{defi}$($\cite[Section 1]{Oh-2006-3}$)$
Let $(A,\cdot,\{\cdot,\cdot\}_A)$ be a Poisson algebra and $\alpha$ is a Poisson derivation of $(A,\cdot,\{\cdot,\cdot\}_A)$,
for all $a,b\in A$, if a linear map $\delta:~A\rightarrow A$ satisfies
\begin{itemize}
\item[(i)] $\delta(a\cdot b)=\delta(a)\cdot b+a\cdot\delta(b);$
\item[(ii)] $\delta(\{a,b\}_A)=\{\delta(a),b\}_A+\{a,\delta(b)\}_A+\delta(a)\cdot\alpha(b)-\alpha(a)\cdot\delta(b),$
\end{itemize} 
then $\delta$ is called a {\bf Poisson $\alpha$-derivation} of $(A,\cdot,\{\cdot,\cdot\}_A)$. 
\end{defi}

\begin{thm}\label{thm:Poisson-poly-ring}
$($\cite[Section 1]{Oh-2006-3}$)$ Let $(A,\cdot,\{\cdot,\cdot\}_A)$ be a Poisson algebra and $\alpha, \delta:~A\rightarrow A$ are two linear maps. Then the polynomial ring $A[x]$ is a Poisson algebra with Poisson bracket
\begin{equation}\label{eq:Poisson-poly-ring}
  \{a,b\}=\{a,b\}_A,\quad\quad \{a,x\}=\alpha(a)x+\delta(a),
\end{equation}
for all $a,b\in A$ if and only if $\alpha$ is a Poisson derivation of $(A,\cdot,\{\cdot,\cdot\}_A)$ and $\delta$ is a Poisson $\alpha$-derivation of $(A,\cdot,\{\cdot,\cdot\}_A)$. 
\end{thm}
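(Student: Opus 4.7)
The plan is to verify that the bilinear extension of \eqref{eq:Poisson-poly-ring} to $A[x]$ satisfies the three Poisson axioms (skew-symmetry, Leibniz, Jacobi), and to track which axiom forces each condition on $\alpha$ and $\delta$. First I would observe that \eqref{eq:Poisson-poly-ring} specifies $\{a,b\}$ for $a,b\in A$ and $\{a,x\}$ for $a\in A$; imposing skew-symmetry and the Leibniz rule then determines the bracket on every monomial $a x^m$ uniquely. In particular one is forced to take $\{x,a\}=-\alpha(a)x-\delta(a)$ and $\{x,x\}=0$. The theorem therefore reduces to checking when this uniquely determined extension actually satisfies the Leibniz rule on pairs of generators and the Jacobi identity on triples of generators.

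For the Leibniz rule I would compute $\{ab,x\}$ in two ways. Directly from \eqref{eq:Poisson-poly-ring} it equals $\alpha(ab)x+\delta(ab)$, while the Leibniz rule forces it to equal $\bigl(\alpha(a)b+a\alpha(b)\bigr)x+\bigl(\delta(a)b+a\delta(b)\bigr)$. Matching coefficients of $x^1$ and $x^0$ recovers exactly condition (i) in the definitions of Poisson derivation and Poisson $\alpha$-derivation. Conversely, once condition (i) holds for both $\alpha$ and $\delta$, a routine induction on polynomial degree propagates the Leibniz rule from generators to all of $A[x]$.

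For the Jacobi identity, trilinearity and skew-symmetry reduce it to checking on ordered generator triples. Triples drawn entirely from $A$ hold because $(A,\cdot,\{\cdot,\cdot\}_A)$ is already Poisson, and triples of the form $(a,x,x)$ or $(x,x,x)$ collapse using $\{x,x\}=0$ together with the Leibniz rule just established. The decisive case is $(a,b,x)$ with $a,b\in A$. Expanding
\[
\{a,\{b,x\}\}+\{b,\{x,a\}\}+\{x,\{a,b\}_A\}
\]
via \eqref{eq:Poisson-poly-ring} and the derivation property of the bracket on $A$, I expect the coefficient of $x$ to reduce to $\{a,\alpha(b)\}_A+\{\alpha(a),b\}_A-\alpha(\{a,b\}_A)$, whose vanishing is condition (ii) for $\alpha$; and the constant term to reduce to $\{a,\delta(b)\}_A+\{\delta(a),b\}_A+\delta(a)\cdot\alpha(b)-\alpha(a)\cdot\delta(b)-\delta(\{a,b\}_A)$, whose vanishing is condition (ii) for $\delta$. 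Both directions of the equivalence then follow.

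The main obstacle is the bookkeeping in this last computation: keeping the signs and the $\alpha$--$\delta$ cross terms straight in the Jacobiator of $(a,b,x)$, and recognising that the constant-term identity is precisely the Poisson $\alpha$-derivation axiom. The asymmetric term $\delta(a)\cdot\alpha(b)-\alpha(a)\cdot\delta(b)$ appears because the two contributions $\alpha(b)\{a,x\}$ and $\alpha(a)\{b,x\}$ enter with opposite signs from the two cyclic permutations, so care is needed to match coefficients in the right order. Once this accounting is carried out carefully, the proof is essentially a line-by-line comparison with Definitions~1.1 and~1.2.
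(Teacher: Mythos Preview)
Your proposal is correct, but note that the paper does not actually prove Theorem~\ref{thm:Poisson-poly-ring}: it is quoted from \cite{Oh-2006-3} as background, with no proof given. There is therefore no ``paper's own proof'' of this statement to compare against.

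That said, your strategy is exactly the one the paper employs when proving the pseudoalgebra generalisation, Theorem~\ref{thm:Poisson-pseudo-poly-ring}. There the forward direction also extracts condition~(i) from the right Leibniz rule applied to $\{(a\cdot b)\ast x\}$, and conditions~(ii) for $\alpha$ and $\delta$ from the Jacobi identity on the triple $(a,b,x)$, matching coefficients of $x^1$ and $x^0$ just as you describe. For the converse, the paper writes down an explicit closed formula for $\{ax^i\ast bx^j\}$ (your ``uniquely determined extension'') and then verifies Jacobi by induction on the exponent $k$ in the third slot, reducing $\{\{ax^i\ast bx^j\}\ast cx^{k+1}\}$ to the cases $k$ and the base case $(a,b,x)$. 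Your sketch compresses this induction into ``routine,'' which is fair in the classical setting; the only thing the paper adds is making the inductive step fully explicit.
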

The Poisson algebra $A[x]$ endowed with the Poisson bracket from Theorem \ref{thm:Poisson-poly-ring} is denoted by $A[x;\alpha,\delta]_P$ and called {\bf Poisson-Ore extension} of $(A,\cdot,\{\cdot,\cdot\}_A)$.
 
In \cite{Lv-2015-11, Lv-2018-12}, the authors introduced the notions of universal enveloping algebras of (double) Poisson-Ore extensions and proved that the universal enveloping algebra of a (double) Poisson-Ore extension is an iterated (double) Ore extension. 
In \cite{Goodearl-1982-9}, the authors computed the Krull dimension of Ore extension. Then Jordan \cite{Jordan-2014-10} generalized the Krull dimension to the Poisson-Ore extension.

\subsection{differential graded Poisson algebras}

The Poisson algebras originated from the study of Hamiltonian mechanics by Poisson. In both commutative and non-commutative algebraic contexts, the Poisson algebras have many important generalizations, such as noncommutative Poisson algebras \cite{Noncommutative-P-1994-14}, Novikov-Poisson algebras \cite{Novikov-P-1997-15} and quiver Poisson algebras \cite{Quiver-P-2007-16}.
In 2016, L\"{u}, Wang and Zhuang \cite{Lv-2016-4} defined differential graded Poisson algebras as a generalization of Poisson algebras in the differential graded setting.

\begin{defi}
Let $A=\bigoplus_{i\in \mathbb{Z}} A_i$ be a graded vector space. A {\bf graded algebra} is a pair $(A, \cdot)$, where $\cdot:~A\otimes A\rightarrow A$ is a $\mathbf{k}$-linear map satisfying $\mathbf{k}\subseteq A_0$ and $A_i\cdot A_j\subseteq A_{i+j}$, for all $i, j\in \mathbb{Z}$. Furthermore, if there is a $\mathbf{k}$-linear homogeneous map $d:~A\rightarrow A$ of degree 1 such
that $d^2=0$ and for all homogeneous elements $a, b\in A$,
\begin{equation}\label{gra-differ-product}
  d(a\cdot b)=d(a)\cdot b+(-1)^{|a|}a\cdot d(b),
\end{equation}
then $(A, \cdot, d)$ is called a {\bf differential graded algebra}.
\end{defi}
Moreover, for all homogeneous elements $a, b\in A$, if $a\cdot b=(-1)^{|a||b|}b\cdot a$, where $|a|$ and $|b|$ denote the degrees of $a$ and $b$ respectively, then graded algebra $(A, \cdot)$ and differential graded algebra $(A, \cdot, d)$ are called {\bf graded commutative}.

\begin{defi}
$($\cite[Definition 3.1]{Lv-2016-4}$)$ Let $(A, \cdot)$ be a graded commutative graded algebra. A {\bf graded Poisson algebra} is a triple $(A, \cdot, \{\cdot,\cdot\})$, where $\{\cdot, \cdot\}:~A\otimes A\rightarrow A$ is a $\mathbf{k}$-linear map of degree $p$, for all homogeneous elements $a, b, c \in A$,  such that 
\begin{enumerate}
\item[(\rmnum{1})] $(A, \{\cdot, \cdot\})$ is a graded Lie algebra, i.e., the bracket $\{\cdot, \cdot\}$ satisfies 
\begin{enumerate}
\item[] $\{a, b\}=-(-1)^{(|a|+p)(|b|+p)}\{b, a\}$,

\item[] $\{a, \{b, c\}\}=\{\{a, b\}, c\}+(-1)^{(|a|+p)(|b|+p)}\{b, \{a, c\}\}$;
\end{enumerate}


\item[(\rmnum{2})] (biderivation property): $\{a, b\cdot c\}=\{a, b\}\cdot c+(-1)^{(|a|+p)|b|}b\cdot \{a, c\}$.
\end{enumerate}
In addition, if there is a $\mathbf{k}$-linear homogeneous map $d:~A\rightarrow A$ of degree 1 such
that $(A, \cdot, d)$ is a differential graded algebra, $d^2=0$ and
\begin{enumerate}
\item[(\rmnum{3})] $d(\{a, b\})=\{d(a), b\}+(-1)^{(|a|+p)}\{a, d(b)\}$,

\end{enumerate}
then $(A, \cdot, \{\cdot,\cdot\}, d)$ is called a {\bf differential graded Poisson algebra}.
\end{defi}

In \cite{Lv-2016-4}, the authors constructed some examples of differential graded Poisson algebras from Lie theory, differential geometry, homological algebra and deformation theory. They also introduced the universal enveloping algebras of differential graded Poisson algebras.
In \cite{Hu-2019-13}, the authors showed that a differential graded symplectic ideal of a differential graded Poisson algebra is the annihilator of a simple differential graded Poisson module.

\subsection{Hopf algebras}\label{sec:Hopf}

In this paper, $H$ will always be a cocommutative Hopf algebra with a coproduct $\Delta$, a counit $\epsilon$ and a bijective antipode $S$. The inverse of $S$ is denoted by $S^{-1}$. 
We use Sweedler's notations: $\Delta(h)=\sum\limits_{h}h_{(1)}\otimes h_{(2)}$ and $(S\otimes1)\Delta(h)=\sum\limits_{h}h_{(-1)}\otimes h_{(2)}$ etc. The dual algebra of $H$ will always be denoted by $X$. 
For more details, we refer to \cite{Sweedler-1969-6}.




\subsection{Motivations}

Let $\mathcal{M}^{l}(H)$ be the category of all left $H$-modules, where $H$ is a cocommutative Hopf algebra over $\mathbf{k}$. 
Let $I$ be a finite nonempty set and denote the tensor product functor $\mathcal{M}^{l}(H)^{\otimes I}\rightarrow \mathcal{M}^{l}(H^{\otimes I})$ by $\boxtimes_{i\in I}$,
for any left $H$-modules $M$ and $L_i~(i\in I)$, then
\begin{equation}\label{eq:pseudo-catego}
  Lin(\{L_i\}_{i\in I}, M):=Hom_{H^{\otimes I}}(\boxtimes_{i\in I} L_i, H^{\otimes I}\otimes_H M)
\end{equation}
makes $\mathcal{M}^{l}(H)$ into a {\bf pseudotensor category} $\mathcal{M}^{\ast}(H)$ \cite{Bakalov-2001-1}. The pseudotensor category plays a crucial role in broad areas. For example, a pseudotensor category with an object is an operad, a Lie $H$-pseudoalgebra is a Lie algebra in the pseudotensor category, the primitive linearly compact Lie algebras in the pseudotensor category are called primitive pseudoalgebras of vector fields. For further details on pseudotensor category, see \cite{Bakalov-2001-1}. In \cite{Wang-2023-2}, the authors introduced the notions of Poisson $H$-pseudoalgebras, which can be viewed as Poisson algebras in the pseudotensor category, this Poisson $H$-pseudoalgebra is a Lie $H$-pseudoalgebra together with a commutative associative algebra, satisfying $H$-differential and Leibniz rule (Definition \ref{defi:Poi-pseudo}). 

A Poisson algebra can be regarded as a Lie algebra together with an associative algebra satisfying Leibniz rule. 
Thus, we consider to generalize Poisson derivations and Poisson-Ore extension of Poisson algebras to Poisson $H$-pseudoalgebras, introducing the notions of Poisson pseudoderivations and Poisson pseudo-Ore extension of Poisson $H$-pseudoalgebras respectively. 
Moreover, we generalize Poisson $H$-pseudoalgebras to the differential graded setting, obtaining differential graded Poisson $H$-pseudoalgebras and their universal enveloping $H$-pseudoalgebras.

\subsection{Outline of this paper}
The paper is organized as follows. Theorem \ref{thm:Poisson-pseudo-poly-ring} is the main result of Section \ref{sec:P-pseudo-Ore-exten}. 
In section \ref{sec:examples}, we first introduce some examples of Poisson $H$-pseudoalgebras on current $H$-pseudoalgebras. Then we construct a Poisson algebraic structure on an annihilation Lie algebra of a Lie $H$-pseudoalgebra in Proposition \ref{pro:annihi-Poi-alg}. 
In section \ref{sec:DGPpa-Uni}, we introduce the notions of differential graded Poisson $H$-pseudoalgebras (or DGP pseudoalgebras). We show that the tensor product of any two graded commutative DGP pseudoalgebras is closed with certain compatibility conditions (Proposition \ref{pro:tensor-psalg}).
Then we introduce the notions of universal enveloping $H$-pseudoalgebras of DGP pseudoalgebras.  It is proved that the universal enveloping $H$-pseudoalgebra of any DGP pseudoalgebra is a differential graded $H$-pseudoalgebra (Proposition \ref{pro:uni-enve-pseudoalg}). 
Furthermore, we introduce a $\mathcal{P}$-triple of a DGP pseudoalgebra, and prove that there is a unique differential graded pseudoalgebra homomorphism between a universal enveloping $H$-pseudoalgebra of a DGP pseudoalgebra and a $\mathcal{P}$-triple of a DGP pseudoalgebra (Theorem \ref{thm:bi-commute}).

\section{Poisson pseudo-Ore extension}\label{sec:P-pseudo-Ore-exten}

Theorem \ref{thm:Poisson-pseudo-poly-ring} is the main result of Section \ref{sec:P-pseudo-Ore-exten}. We generalize Theorem \ref{thm:Poisson-poly-ring} in \cite{Oh-2006-3} to the setting of Poisson $H$-pseudoalgebras, and define a Poisson pseudo-Ore extension of a Poisson $H$-pseudoalgebra.

\begin{defi}
$($\cite[Section 1]{Bakalov-2001-1}$)$ An {\bf $H$-pseudoalgebra} is a left $H$-module $A$ endowed with a left $H$-bilinear map $\mu:~A\otimes A\rightarrow (H\otimes H)\otimes_H A$ satisfying:
\begin{equation}\label{eq:psalg-bili}
  fa\ast gb=\big((f\otimes g)\otimes_H \mathrm{id}_A\big)(a\ast b),\quad\forall a, b\in A,~f, g\in H,
\end{equation}
where the identity map of $A$ is denoted by $\mathrm{id}_A$, this left $H$-bilinear map $\mu$ is called a {\bf pseudoproduct} and denoted $\mu(a\otimes b)$ by $a\ast b$.
\end{defi}
Moreover, if $(a\ast b)\ast c=a\ast(b\ast c)$, for all $a, b, c\in A$, then this $H$-pseudoalgebra is {\bf associative}, we denote it by $(A,\ast)$.

\begin{lem}\label{lem:basis}
$($\cite[Section 2]{Bakalov-2001-1}$)$ Every element of $H\otimes H$ can be uniquely represented as $\sum\limits_i(h_i\otimes 1)\Delta(l_i)$, where $\{h_i\}$ is a fixed $\mathbf{k}$-basis of $H$ and $l_i\in H$.
\end{lem}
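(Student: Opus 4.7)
The plan is to reduce the lemma to showing that the $\mathbf{k}$-linear map
\[
\phi: H\otimes H \lon H\otimes H,\qquad \phi(h\otimes l)=(h\otimes 1)\Delta(l)=\sum_{l}hl_{(1)}\otimes l_{(2)},
\]
is a bijection. Once $\phi$ is invertible, fix the $\mathbf{k}$-basis $\{h_i\}$ of $H$; then $\{h_i\otimes h_j\}$ is a $\mathbf{k}$-basis of $H\otimes H$, so every element of $H\otimes H$ can be written uniquely as $\sum_i h_i\otimes l_i$ with $l_i\in H$. Applying $\phi$ to both sides gives the existence of the claimed presentation, and the injectivity of $\phi$ upgrades this existence to uniqueness of the coefficients $l_i$.

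First I would construct an explicit inverse for $\phi$ using the antipode. The natural candidate is
\[
\psi:H\otimes H\lon H\otimes H,\qquad \psi(a\otimes b)=\sum_{b}aS(b_{(1)})\otimes b_{(2)},
\]
which makes sense because $S$ is a well-defined $\mathbf{k}$-linear map on $H$. The verification that $\phi\circ\psi=\mathrm{id}$ and $\psi\circ\phi=\mathrm{id}$ is a direct computation using coassociativity and the antipode axiom $\sum_{h}h_{(1)}S(h_{(2)})=\epsilon(h)1=\sum_{h}S(h_{(1)})h_{(2)}$; for instance,
\[
\phi\psi(a\otimes b)=\sum_{b}(aS(b_{(1)})\otimes 1)\Delta(b_{(2)})=\sum_{b}aS(b_{(1)})b_{(2)}\otimes b_{(3)}=\sum_{b}a\epsilon(b_{(1)})\otimes b_{(2)}=a\otimes b,
\]
and symmetrically for $\psi\phi$.

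Having established that $\phi$ is bijective, I would conclude as follows. Any $X\in H\otimes H$ has a unique preimage $Y=\phi^{-1}(X)$, and $Y$ admits a unique decomposition $Y=\sum_i h_i\otimes l_i$ relative to the $\mathbf{k}$-basis $\{h_i\}$ (treating the second tensor factor as a free $\mathbf{k}$-module over the basis $\{h_i\}$ as well). Therefore
\[
X=\phi(Y)=\sum_i (h_i\otimes 1)\Delta(l_i),
\]
with the $l_i$ uniquely determined by $X$. I expect no serious obstacle here: the only delicate point is choosing the correct antipode-twisted inverse $\psi$, since a naive inverse would fail without the bijectivity of $S$; however, the Hopf algebra axioms supply exactly what is needed, and coassociativity makes the Sweedler computations routine.
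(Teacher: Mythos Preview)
Your argument is correct: defining $\phi(h\otimes l)=(h\otimes 1)\Delta(l)$ and exhibiting the antipode-twisted inverse $\psi(a\otimes b)=\sum_b aS(b_{(1)})\otimes b_{(2)}$ is exactly the standard way to prove this, and your Sweedler computations go through as written. Note that the paper does not actually supply a proof of this lemma; it is quoted from \cite[Section 2]{Bakalov-2001-1}, and your argument is the same one given there. One minor remark: you mention that bijectivity of $S$ might be needed, but in fact the computation of $\phi\psi$ and $\psi\phi$ only uses the defining antipode identity $\sum_h S(h_{(1)})h_{(2)}=\epsilon(h)1=\sum_h h_{(1)}S(h_{(2)})$, so the hypothesis that $S$ is bijective is not required here.
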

Hence the pseudoproduct on $(A,\ast)$ can be written in the form:
\begin{equation*}
  a\ast b=\sum\limits_i(h_i\otimes 1)\otimes_H c_i.
\end{equation*}

\begin{defi}
$($\cite[Section 3]{Bakalov-2001-1}$)$ An {\bf Lie $H$-pseudoalgebra} is a left $H$-module $A$ endowed with a left $H$-bilinear map $\mu:~A\otimes A\rightarrow (H\otimes H)\otimes_H A$, satisfying the following axioms, for all $a, b, c \in A,~f, g \in H,~\sigma=(12) \in S_2$, 
\begin{align}
  \label{eq:H-bilinear}\bf \textit{H}-bilinearity&: \{fa\ast gb\}=\big((f\otimes g)\otimes_H\mathrm{id}_A\big)\{a\ast b\}, \\
  \label{eq:Skewsymmetry}\bf Skew-symmetry&: \{b\ast a\}=-(\sigma\otimes_H \mathrm{id}_A)\{a\ast b\},\\
  \label{eq:Jacobi-id}\bf Jacobi~~identity&: \{a\ast\{b\ast c\}\}-\big((\sigma\otimes \mathrm{id})\otimes_H \mathrm{id}_A\big)\{b\ast\{a\ast c\}\}=\{\{a\ast b\}\ast c\},
\end{align}
where the identity map of $H$ is denoted by $\mathrm{id}$, the pseudoproduct $\mu$ is called a {\bf pseudobracket} and denoted by $\mu(a\otimes b)=\{a\ast b\}$. We denote this Lie $H$-pseudoalgebra by $(A,\{\ast \})$.
\end{defi}

\begin{defi}\label{defi:Poi-pseudo}
$($\cite[Section 4]{Wang-2023-2}$)$ A {\bf Poisson $H$-pseudoalgebra} $(A,\cdot,\{\ast\})$ is a Lie $H$-pseudoalgebra $(A,\{\ast\})$ equipped with an associative product $\cdot:~A\otimes A\rightarrow A$, satisfying the following axioms, for all $a, b, c \in A,~h \in H$,
\begin{align}
  \label{eq:H-differential}\bf \textit{H}-differential&: h(a\cdot b)=(h_{(1)}a)\cdot(h_{(2)}b), \\
  \label{eq:left-Leibniz-rule}\bf left~~Leibniz~~rule&: \{a\ast (b\cdot c)\}=\{a\ast b\}\cdot c+\{a\ast c\}\cdot b,
\end{align}
for $\{a\ast b\}=\sum\limits_{i}(f_i\otimes g_i)\otimes_H e_i$,~$\{a\ast b\}\cdot c$ is defined by
\begin{equation}\label{eq:left-Leibniz}
  \{a\ast b\}\cdot c:= \sum\limits_{i}(f_i\otimes g_{i(1)})\otimes_H e_i\cdot(g_{i(-2)}c).
\end{equation}
\end{defi}
By \eqref{eq:Skewsymmetry} and \eqref{eq:left-Leibniz}, we have
\begin{equation*}
  \{b\ast a\}\cdot c=-\sum\limits_{i}(g_i\otimes f_{i(1)})\otimes_H e_i\cdot(f_{i(-2)}c).
\end{equation*}
Furthermore, if $(A,\cdot)$ is a commutative associative algebra, then Poisson $H$-pseudoalgebra $(A,\cdot,\{\ast\})$ is {\bf commutative}.

\begin{lem}\label{lem:right-Leibniz-rule}
$($\cite[Section 4]{Wang-2023-2}$)$ Let $(A,\cdot,\{\ast\})$ be a Poisson $H$-pseudoalgebra, then we have the {\bf right Leibniz rule}
\begin{equation}\label{eq:right-Leibniz-rule}
  \{(a\cdot b)\ast c\}=a\cdot\{b\ast c\}+b\cdot\{a\ast c\},\quad\forall a,b,c \in A.
\end{equation}

\end{lem}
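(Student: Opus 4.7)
The plan is to derive the right Leibniz rule from the left Leibniz rule \eqref{eq:left-Leibniz-rule} together with the skew-symmetry axiom \eqref{eq:Skewsymmetry} and commutativity of the associative product $(A,\cdot)$. First, I would apply skew-symmetry to bring the product inside the bracket:
\[
\{(a\cdot b)\ast c\} = -(\sigma\otimes_H\mathrm{id}_A)\{c\ast(a\cdot b)\}.
\]
Next, expanding $\{c\ast(a\cdot b)\}$ by the left Leibniz rule yields $\{c\ast a\}\cdot b + \{c\ast b\}\cdot a$, so that
\[
\{(a\cdot b)\ast c\} = -(\sigma\otimes_H\mathrm{id}_A)(\{c\ast a\}\cdot b) - (\sigma\otimes_H\mathrm{id}_A)(\{c\ast b\}\cdot a).
\]

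The key intermediate claim is the identity
\[
-(\sigma\otimes_H\mathrm{id}_A)(\{c\ast a\}\cdot b) = b\cdot\{a\ast c\}
\]
(and its symmetric analogue with $a$ and $b$ swapped). To establish it, I would expand $\{a\ast c\} = \sum_i(f_i\otimes g_i)\otimes_H e_i$ using the canonical form of Lemma \ref{lem:basis}, so that skew-symmetry gives $\{c\ast a\} = -\sum_i(g_i\otimes f_i)\otimes_H e_i$, and the displayed formula following \eqref{eq:left-Leibniz} produces
\[
\{c\ast a\}\cdot b = -\sum_i(g_i\otimes f_{i(1)})\otimes_H e_i\cdot(f_{i(-2)}b).
\]
Applying $-(\sigma\otimes_H\mathrm{id}_A)$ swaps the two $H$-factors and cancels the sign, yielding $\sum_i(f_{i(1)}\otimes g_i)\otimes_H e_i\cdot(f_{i(-2)}b)$. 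Under the natural convention that $b\cdot X$ acts on the first $H$-factor via comultiplication, and using the commutativity of $(A,\cdot)$ to move $(f_{i(-2)}b)$ past $e_i$, this is precisely the defining expression for $b\cdot\{a\ast c\}$.

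The main obstacle is the Hopf-algebraic bookkeeping in this intermediate matching step: the operation ``$\cdot b$'' of \eqref{eq:left-Leibniz} splits the second $H$-factor via $\Delta$ and then applies $S$ to act on $b$, whereas after conjugation by $\sigma\otimes_H\mathrm{id}_A$ the roles of the two $H$-factors are interchanged. Reconciling the two sides requires careful tracking of the skew-symmetric form of $\{c\ast a\}\cdot b$ together with an essential use of the commutativity of $(A,\cdot)$, which has no counterpart in the non-commutative setting. Once the intermediate identity and its $a\leftrightarrow b$ analogue are in hand, the two pieces combine to give $a\cdot\{b\ast c\} + b\cdot\{a\ast c\}$, which is the desired right Leibniz rule \eqref{eq:right-Leibniz-rule}.
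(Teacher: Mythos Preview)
The paper does not supply its own proof of this lemma; it simply cites \cite{Wang-2023-2} and states the result. Your approach---reduce to the left Leibniz rule via skew-symmetry, then match the explicit Hopf-algebraic formula \eqref{eq:left-Leibniz} against the definition \eqref{eq:right-Leibniz} after applying $\sigma\otimes_H\mathrm{id}_A$---is the natural and standard one, and your computation is correct: the swap produces $\sum_i(f_{i(1)}\otimes g_i)\otimes_H e_i\cdot(f_{i(-2)}b)$, which becomes $b\cdot\{a\ast c\}$ once $e_i$ and $f_{i(-2)}b$ are commuted.

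You are also right to flag commutativity of $(A,\cdot)$ as essential to this last step, and this is a genuine subtlety worth noting. Definition~\ref{defi:Poi-pseudo} as written in the paper does not impose commutativity, yet the symmetric form of the right Leibniz rule \eqref{eq:right-Leibniz-rule} (and already the symmetric form of the left rule \eqref{eq:left-Leibniz-rule}, which forces $\{a\ast(b\cdot c)\}=\{a\ast(c\cdot b)\}$) only behaves well when $\cdot$ is commutative. So your proof is correct under the commutativity hypothesis you invoke, and that hypothesis is in practice the intended one; but strictly speaking the lemma as stated for arbitrary Poisson $H$-pseudoalgebras needs it.
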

Similar to \eqref{eq:left-Leibniz}, for $\{b\ast c\}=\sum\limits_{i}(h_i\otimes l_i)\otimes_H d_i$,~$a\cdot\{b\ast c\}$ is defined by
\begin{equation}\label{eq:right-Leibniz}
  a\cdot\{b\ast c\}:=\sum\limits_{i}(h_{i(1)}\otimes l_i)\otimes_H (h_{i(-2)}a)\cdot d_i.
\end{equation}
By \eqref{eq:Skewsymmetry} and \eqref{eq:right-Leibniz}, we have
\begin{equation*}
  a\cdot\{c\ast b\}:=-\sum\limits_{i}(l_{i(1)}\otimes h_i)\otimes_H (l_{i(-2)}a)\cdot d_i.
\end{equation*}

Unless otherwise specified, for any left $H$-module $A$, we omit the summation and simplify $\sum\limits_{i}(f_i\otimes g_i)\otimes_H a_i\in H^{\otimes2}\otimes_H A$ as $(f_i\otimes g_i)\otimes_H a_i$ in the following page.
 
In the definition of Poisson $H$-pseudoalgebra $(A,\cdot,\{\ast\})$, for all $a, b\in A$ and $f_1, f_2,...,f_{n-1}, f_n\in H$, we define
\begin{eqnarray}
  \label{eqs:define-products1}(f_1\otimes f_2\otimes\cdots\otimes f_n\otimes_H a)\cdot b &:=& (f_1\otimes f_2f_{n(1)}\otimes\cdots\otimes f_{n-1}f_{n({n-2})}\otimes f_{n(n-1)})\otimes_H a\cdot (f_{n(-n)}b), \\
  \label{eqs:define-products2}a\cdot(f_1\otimes f_2\otimes\cdots\otimes f_n\otimes_H b) &:=& (f_{1(1)}\otimes f_2f_{1(2)}\otimes\cdots\otimes f_{n-1}f_{1(n-1)}\otimes f_n)\otimes_H (f_{1(-n)}a)\cdot b,
\end{eqnarray}
where $\Delta^{n-1}(f_1)=f_{1(1)}\otimes f_{1(2)}\cdots f_{1(n-1)}\otimes f_{1(n)}$, and we use the following rule:
\begin{equation}\label{eq:Delta-rule}
  \{(f\otimes_H a)\ast(g\otimes_H b)\}:=(f\Delta^{n-1}\otimes g\Delta^{m-1}\otimes_H \mathrm{id}_A)\{a\ast b\},
\end{equation}
for $f\otimes_H a \in H^{\otimes n}\otimes_H A$ and $g\otimes_H b \in H^{\otimes m}\otimes_H A$. Throughout this paper, we use rule \eqref{eq:Delta-rule} to deal with pseudobracket $\{\ast\}$.

\begin{defi}\label{defi:Poi-pseudo-deri}
Let $(A,\cdot,\{\ast\}_A)$ be a Poisson $H$-pseudoalgebra and $\alpha:~A\rightarrow A$ is a left $H$-linear map, for all $a,b\in A$, if $\alpha$ satisfies
\begin{itemize}
\item[(i)] $\alpha(a\cdot b)=\alpha(a)\cdot b+a\cdot\alpha(b);$
\item[(ii)] $(\mathrm{id}\otimes \mathrm{id}\otimes_H \alpha)\{a\ast b\}_A=\{\alpha(a)\ast b\}_A+\{a\ast\alpha(b)\}_A,$
\end{itemize} 
then $\alpha$ is called a {\bf Poisson pseudoderivation} of $(A,\cdot,\{\ast\}_A)$. 
If $\alpha$ only satisfies (i), then $\alpha$ is called a {\bf pseudoderivation} of $(A,\cdot,\{\ast\}_A)$. 
\end{defi}

\begin{defi}\label{defi:Poi-alpha-pseudo-deri}
Let $(A,\cdot,\{\ast\}_A)$ be a Poisson $H$-pseudoalgebra and $\alpha$ is a Poisson pseudoderivation of $(A,\cdot,\{\ast\}_A)$. 
Let $\delta:~A\rightarrow A$ be a left $H$-linear map, for all $a,b\in A$, if $\delta$ satisfies
\begin{itemize}
\item[(i)] $\delta(a\cdot b)=\delta(a)\cdot b+a\cdot\delta(b);$
\item[(ii)] $(\mathrm{id}\otimes \mathrm{id}\otimes_H \delta)\{a\ast b\}_A=\{\delta(a)\ast b\}_A+\{a\ast\delta(b)\}_{A}
+(1\otimes 1)\otimes_H\delta(a)\cdot\alpha(b)-(1\otimes 1)\otimes_H\alpha(a)\cdot\delta(b),$
\end{itemize} 
where the unit of $H$ is denoted by $1$, then $\delta$ is called a {\bf Poisson $\alpha$-pseudoderivation} of $(A,\cdot,\{\ast\}_A)$. 
\end{defi}

\begin{thm}\label{thm:Poisson-pseudo-poly-ring}
Let $(A,\cdot,\{\ast\}_A)$ be a commutative Poisson $H$-pseudoalgebra, then the polynomial ring $A[x]$ is a Poisson $H$-pseudoalgebra with Poisson pseudobracket
\begin{equation}\label{eq:Poisson-pseudo-poly-ring}
  \{a\ast b\}=\{a\ast b\}_A,\quad\quad \{a\ast x\}=(1\otimes 1)\otimes_H\big(\alpha(a)x+\delta(a)\big),
\end{equation}
for all $a,b\in A$ if and only if $\alpha$ is a Poisson pseudoderivation of $(A,\cdot,\{\ast\}_A)$ and $\delta$ is a Poisson $\alpha$-pseudoderivation of $(A,\cdot,\{\ast\}_A)$.
\end{thm}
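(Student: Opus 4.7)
The plan is to prove both directions by reducing everything to checks on the generating set $A\cup\{x\}$ of $A[x]$, using the fact that $H$-bilinearity together with the left Leibniz rule \eqref{eq:left-Leibniz-rule} (and its right counterpart \eqref{eq:right-Leibniz-rule}) determines the pseudobracket on $A[x]\otimes A[x]$ from its values on generators. For the sufficiency direction, I first extend the formulas \eqref{eq:Poisson-pseudo-poly-ring} to a candidate pseudobracket on $A[x]$ by setting $\{x\ast x\}=0$ and propagating via the Leibniz rules. The $H$-module structure on $A[x]$ is the natural one with $hx=\epsilon(h)x$, so that the $H$-differential property \eqref{eq:H-differential} reduces to that of $(A,\cdot,\{\ast\}_A)$. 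Skew-symmetry \eqref{eq:Skewsymmetry} is automatic on pure $A$-generators from $\{\ast\}_A$, and on the mixed pair $(a,x)$ it defines $\{x\ast a\}$ consistently because $(1\otimes 1)\in H\otimes H$ is $\sigma$-invariant.

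The substantive content then concentrates in the Jacobi identity \eqref{eq:Jacobi-id}, which I would verify on all triples of generators. The case $(a,b,c)\in A^{3}$ is the Jacobi identity of $(A,\{\ast\}_A)$, and the cases in which two or three arguments equal $x$ collapse either because $\{x\ast x\}=0$ or by direct Leibniz expansion. The essential case is $(a,b,x)$ with $a,b\in A$. Unwinding the three terms of \eqref{eq:Jacobi-id} via \eqref{eq:Poisson-pseudo-poly-ring}, \eqref{eq:left-Leibniz-rule}, and the substitution rule \eqref{eq:Delta-rule}, the resulting element of $H^{\otimes 3}\otimes_H A[x]$ decomposes by degree in $x$. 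Collecting the coefficient of $x^{1}$ yields precisely condition (ii) of Definition \ref{defi:Poi-pseudo-deri} for $\alpha$, while the coefficient of $x^{0}$ yields condition (ii) of Definition \ref{defi:Poi-alpha-pseudo-deri} for $\delta$; the two terms $(1\otimes 1)\otimes_H\delta(a)\cdot\alpha(b)-(1\otimes 1)\otimes_H\alpha(a)\cdot\delta(b)$ appearing in the latter arise naturally from the $x^{0}$ pieces of $\{a\ast x\}\cdot\alpha(b)$ and its skew-symmetric partner through \eqref{eqs:define-products1}--\eqref{eqs:define-products2}, using the commutativity of $(A,\cdot)$. Condition (i) of each definition is the input required for the Leibniz rule to propagate consistently from the generators to arbitrary monomials $ax^{n}$.

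The necessity direction reverses this analysis: specializing the Leibniz rule to $\{a\ast(b\cdot x)\}$ (expanded two different ways) and to $\{(a\cdot b)\ast x\}$ forces Definitions \ref{defi:Poi-pseudo-deri}(i) and \ref{defi:Poi-alpha-pseudo-deri}(i), and the degree-separation of the Jacobi identity on $(a,b,x)$ forces the remaining conditions~(ii).

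The main obstacle is the bookkeeping in the nested pseudobrackets on triples of the form $(a,b,x)$: each pseudobracket lies in $H^{\otimes 2}\otimes_H A[x]$, so a nested Jacobi term lives in $H^{\otimes 3}\otimes_H A[x]$, and extracting the coefficients of $x^{1}$ and $x^{0}$ requires careful tracking of the Sweedler coproducts produced by \eqref{eqs:define-products1}, \eqref{eqs:define-products2} and \eqref{eq:Delta-rule}. Cocommutativity of $H$ together with the unique-representation statement of Lemma \ref{lem:basis} is what allows these Sweedler expansions to be compared term by term; once this is organized, the required identities collapse to the literal defining equations of the Poisson pseudoderivations $\alpha$ and $\delta$.
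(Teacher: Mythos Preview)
Your necessity argument matches the paper's exactly: both extract condition (i) of Definitions \ref{defi:Poi-pseudo-deri} and \ref{defi:Poi-alpha-pseudo-deri} from the right Leibniz rule applied to $\{(a\cdot b)\ast x\}$, and condition (ii) by writing out the Jacobi identity on the triple $(a,b,x)$ and separating by degree in $x$.

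For sufficiency the routes diverge. The paper writes down an explicit closed formula \eqref{eq:pseu-duct-poly-ring} for $\{ax^{i}\ast bx^{j}\}$ on arbitrary monomials and then proves the Jacobi identity by induction on the exponent $k$ in $cx^{k}$, carrying out a lengthy direct computation that reduces the $(i,j,k+1)$ case to the $(i,j,k)$ case together with the base case $(i,j,1)$. Your proposal---check Jacobi only on the generating set $A\cup\{x\}$ and then propagate---is more conceptual and potentially shorter, but it has a gap precisely at the propagation step. You assert that verifying Jacobi on generators suffices without saying why. The standard mechanism is that, once the left and right Leibniz rules hold, the Jacobiator
\[
J(u,v,w)=\{u\ast\{v\ast w\}\}-\{\{u\ast v\}\ast w\}-\big((\sigma\otimes\mathrm{id})\otimes_H\mathrm{id}\big)\{v\ast\{u\ast w\}\}
\]
is a derivation of the commutative product in each slot, so vanishing on generators forces vanishing on all of $A[x]$. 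You need either to make this derivation property explicit (and check that it survives the $H^{\otimes 3}\otimes_H(-)$ bookkeeping imposed by \eqref{eqs:define-products1}--\eqref{eq:Delta-rule}), or to run an induction on the exponents as the paper does; as written, the reader has no justification that Jacobi holds beyond triples of generators. A secondary point: your claim that the cases with two or three copies of $x$ ``collapse'' is a bit quick---they are simpler, but $(a,x,x)$ still requires a short computation rather than vanishing trivially.
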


\begin{proof}
If $A[x]$ is a Poisson $H$-pseudoalgebra with the Poisson pseudobracket \eqref{eq:Poisson-pseudo-poly-ring}, for all $a,b\in A$, then we obtain 
\begin{eqnarray*}
  \{(a\cdot b)\ast x\} &=& (1\otimes 1)\otimes_H\big(\alpha(a\cdot b)x+\delta(a\cdot b)\big), \\
  a\{b\ast x\}+b\{a\ast x\} &=& (1\otimes 1)\otimes_H\big(a(\alpha(b)x)+a\cdot\delta(b)\big)+(1\otimes 1)\otimes_H\big(b(\alpha(a)x)+b\cdot\delta(a)\big)\\
  &=&(1\otimes 1)\otimes_H\big(a\cdot\alpha(b)+b\cdot\alpha(a)\big)x+(1\otimes 1)\otimes_H\big(a\cdot\delta(b)+b\cdot\delta(a)\big),
\end{eqnarray*}
by \eqref{eq:right-Leibniz-rule} and the commutativity of associative algebra $(A,\cdot)$, thus both $\alpha$ and $\delta$ are pseudoderivations of $(A,\cdot,\{\ast\}_A)$. 

By Lemma \ref{lem:basis}, suppose that $\{a\ast b\}_A=(h_i\otimes1)\otimes_H c_i$, $\{a\ast\alpha(b)\}_A=(h_j\otimes1)\otimes_H c_j$, $\{a\ast\delta(b)\}_A=(h_k\otimes1)\otimes_H c_k$, $\{b\ast\alpha(a)\}_A=(l_p\otimes1)\otimes_H e_p$, $\{b\ast\delta(a)\}_A=(l_q\otimes1)\otimes_H e_q$. By \eqref{eq:left-Leibniz-rule}, \eqref{eqs:define-products1}, \eqref{eq:Delta-rule} and \eqref{eq:Poisson-pseudo-poly-ring}, then we have
\begin{eqnarray*}
  &&\{\{a\ast b\}_A\ast x\} = \{\big((h_i\otimes1)\otimes_H c_i\big) \ast x\}\\
  &=&(h_i\otimes1\otimes1)(\Delta\otimes1)(1\otimes1)\otimes_H \alpha(c_i)x+(h_i\otimes1\otimes1)(\Delta\otimes1)(1\otimes1)\otimes_H \delta(c_i)\\
  &=&\big((h_i\otimes1\otimes1)\otimes_H\alpha(c_i)\big)x+(h_i\otimes1\otimes1)\otimes_H\delta(c_i),
\end{eqnarray*}
\begin{eqnarray*}
  &&\{a\ast\{b\ast x\}\} = \{a\ast \big((1\otimes1)\otimes_H\alpha(b)x\big)\}+\{a\ast \big((1\otimes 1)\otimes_H\delta(b)\big)\}  \\
  &=&(1\otimes1\otimes1)(1\otimes\Delta)(h_j\otimes1)\otimes_H c_jx
  +(1\otimes1\otimes1)(1\otimes\Delta)(1\otimes1)\otimes_H(\alpha(a)x)\alpha(b)\\
  &&+(1\otimes1\otimes1)(1\otimes\Delta)(1\otimes1)\otimes_H\delta(a)\cdot\alpha(b)
  +(1\otimes1\otimes1)(1\otimes\Delta)(h_k\otimes1)\otimes_H c_k\\
  &=&\big((h_j\otimes1\otimes1)\otimes_Hc_j\big)x+(1\otimes1\otimes1)\otimes_H(\alpha(a)x)\alpha(b)\\
  &&+(1\otimes1\otimes1)\otimes_H\delta(a)\cdot\alpha(b)+(h_k\otimes1\otimes1)\otimes_Hc_k,
\end{eqnarray*}
\noindent and
\begin{eqnarray*}
  &&\{b\ast\{a\ast x\}\} = \{b\ast ((1\otimes1)\otimes_H\alpha(a)x)\}+\{b\ast ((1\otimes 1)\otimes_H\delta(a))\}  \\
  &=&(1\otimes1\otimes1)(1\otimes\Delta)(l_p\otimes1)\otimes_H e_px
  +(1\otimes1\otimes1)(1\otimes\Delta)(1\otimes1)\otimes_H(\alpha(b)x)\alpha(a)\\
  &&+(1\otimes1\otimes1)(1\otimes\Delta)(1\otimes1)\otimes_H\delta(b)\cdot\alpha(a)
  +(1\otimes1\otimes1)(1\otimes\Delta)(l_q\otimes1)\otimes_H e_q\\
  &=&\big((l_p\otimes1\otimes1)\otimes_He_p\big)x+(1\otimes1\otimes1)\otimes_H(\alpha(b)x)\alpha(a)\\
  &&+(1\otimes1\otimes1)\otimes_H\delta(b)\cdot\alpha(a)+(l_q\otimes1\otimes1)\otimes_He_q.
\end{eqnarray*}
Since the Poisson pseudobracket $\{\ast\}$ satisfies the Jacobi identity, thus we obtain 
\begin{eqnarray*}
  0 &=& \{\{a\ast b\}_A\ast x\}-\{a\ast\{b\ast x\}\}+\big((\sigma\otimes\mathrm{id})\otimes_H \mathrm{id}\big)\{b\ast\{a\ast x\}\} \\
  &=& \big((h_i\otimes1\otimes1)\otimes_H\alpha(c_i)\big)x+(h_i\otimes1\otimes1)\otimes_H\delta(c_i)\\
  &&-\big((h_j\otimes1\otimes1)\otimes_Hc_j\big)x-(1\otimes1\otimes1)\otimes_H(\alpha(a)x)\alpha(b)\\
  &&-(1\otimes1\otimes1)\otimes_H\delta(a)\cdot\alpha(b)-(h_k\otimes1\otimes1)\otimes_Hc_k\\
  &&+\big((1\otimes l_p\otimes1)\otimes_He_p\big)x+(1\otimes1\otimes1)\otimes_H(\alpha(b)x)\alpha(a)\\
  &&+(1\otimes1\otimes1)\otimes_H\delta(b)\cdot\alpha(a)+(1\otimes l_q\otimes1)\otimes_He_q\\
  &=&\big((h_i\otimes1\otimes1)\otimes_H\alpha(c_i)-(h_j\otimes1\otimes1)\otimes_Hc_j+(1\otimes l_p\otimes1)\otimes_He_p\big)x\\
  &&+(h_i\otimes1\otimes1)\otimes_H\delta(c_i)-(h_k\otimes1\otimes1)\otimes_Hc_k+(1\otimes l_q\otimes1)\otimes_He_q\\
  &&+(1\otimes1\otimes1)\otimes_H\delta(b)\cdot\alpha(a)-(1\otimes1\otimes1)\otimes_H\delta(a)\cdot\alpha(b)\\
  &&+(1\otimes1\otimes1)\otimes_H(\alpha(b)x)\alpha(a)-(1\otimes1\otimes1)\otimes_H(\alpha(a)x)\alpha(b).
\end{eqnarray*}
By the commutativity of associative algebra $(A,\cdot)$, we have 
\begin{equation*}
  (\alpha(b)x)\alpha(a)=(\alpha(b)\cdot\alpha(a))x=(\alpha(a)\cdot\alpha(b))x=(\alpha(a)x)\alpha(b),
\end{equation*}
and
\begin{eqnarray*}
  0&=&(h_i\otimes1)\otimes_H\alpha(c_i)-(h_j\otimes1)\otimes_Hc_j+(1\otimes l_p)\otimes_He_p\\
   &=&(\mathrm{id}\otimes \mathrm{id}\otimes_H \alpha)\{a\ast b\}_A-\{a\ast\alpha(b)\}_A-\{\alpha(a)\ast b\}_A,\\
  0&=&(h_i\otimes1)\otimes_H\delta(c_i)-(h_k\otimes1)\otimes_Hc_k+(1\otimes l_q)\otimes_He_q\\
  &&+(1\otimes1)\otimes_H\delta(b)\cdot\alpha(a)-(1\otimes1)\otimes_H\delta(a)\cdot\alpha(b)\\
   &=&(\mathrm{id}\otimes \mathrm{id}\otimes_H\delta)\{a\ast b\}_A-\{a\ast\delta(b)\}_{A}-\{\delta(a)\ast b\}_A\\
  &&-(1\otimes 1)\otimes_H\delta(a)\cdot\alpha(b)+(1\otimes 1)\otimes_H\alpha(a)\cdot\delta(b),
\end{eqnarray*}
thus $\alpha$ is a Poisson pseudoderivation of $(A,\cdot,\{\ast\}_A)$ and $\delta$ is a Poisson $\alpha$-pseudoderivation of $(A,\cdot,\{\ast\}_A)$.
  
Conversely, suppose that $\alpha$ is a Poisson pseudoderivation of $(A,\cdot,\{\ast\}_A)$ and $\delta$ is a Poisson $\alpha$-pseudoderivation of $(A,\cdot,\{\ast\}_A)$. 
Define a left $H$-bilinear map $\{\ast\}:~A[x]\otimes A[x]\rightarrow (H\otimes H)\otimes_HA[x]$ by
\begin{eqnarray}\begin{split}\label{eq:pseu-duct-poly-ring}
\{ax^i \ast bx^j\} =& \big(\{a\ast b\}_A+(1\otimes1)\otimes_Hjb\cdot\alpha(a)-(1\otimes1)\otimes_Hia\cdot\alpha(b)\big)x^{i+j} \\
                    &+ \big((1\otimes1)\otimes_Hjb\cdot\delta(a)-(1\otimes1)\otimes_Hia\cdot\delta(b)\big)x^{i+j-1},
                \end{split}
\end{eqnarray}
for all monomials $ax^i,~bx^j\in A[x]$.
Note that for $i=0,~j=1$ and $b=1$ in \eqref{eq:pseu-duct-poly-ring}, the case is \eqref{eq:Poisson-pseudo-poly-ring}. 
By \eqref{eq:pseu-duct-poly-ring}, we have $\{ax^i \ast bx^j\}=-(\sigma\otimes_H\mathrm{id}_A)\{bx^j \ast ax^i\}$. 

For simplicity, we will consistently denote an identity map by $\mathrm{id}$ without confusion, for all $ax^i,~bx^j,~cx^k\in A[x]$, it suffices to check the Jacobi identity:
\begin{equation}\label{eq:Jacobi-iden-A[x]}
  \{\{ax^i\ast bx^j\}\ast cx^{k}\}-\{ax^i\ast\{bx^j \ast cx^{k}\}\}+\big((\sigma\otimes\mathrm{id})\otimes_H \mathrm{id}\big)\{bx^j\ast\{ax^i \ast cx^{k}\}\}=0.
\end{equation}
We prove the above Jacobi identity by the induction on $i, j, k$. The trivial case $i=j=k=0$ in \eqref{eq:Jacobi-iden-A[x]} equals to \eqref{eq:Jacobi-id}.
The case $k=c=1$ and $j=i=0$ in \eqref{eq:Jacobi-iden-A[x]} equals to
\begin{equation*}
  \{\{a\ast b\}_A\ast x\}-\{a\ast\{b \ast x\}\}+\big((\sigma\otimes\mathrm{id})\otimes_H \mathrm{id}\big)\{b\ast\{a\ast x\}\}=0.
\end{equation*}
It is shown immediately by the commutativity of associative algebra $(A,\cdot)$, $\alpha$ satisfies (ii) in Definition \ref{defi:Poi-pseudo-deri} and $\delta$ satisfies (ii) in Definition \ref{defi:Poi-alpha-pseudo-deri}.
Suppose that \eqref{eq:Jacobi-iden-A[x]} holds for any $i, j, k$, then we show that \eqref{eq:Jacobi-iden-A[x]} also holds for $i, j, k+1$ by induction:
\begin{eqnarray*}
  \{\{ax^i\ast bx^j\}\ast cx^{k+1}\} &=& \{\{a\ast b\}_Ax^{i+j} \ast cx^{k+1}\}\\
  &&+\{((1\otimes1)\otimes_Hjb\cdot\alpha(a))x^{i+j} \ast cx^{k+1}\}-\{((1\otimes1)\otimes_Hia\cdot\alpha(b))x^{i+j}\ast cx^{k+1}\}\\
  &&+\{((1\otimes1)\otimes_Hjb\cdot\delta(a))x^{i+j-1}\ast cx^{k+1}\}-\{((1\otimes1)\otimes_Hia\cdot\delta(b))x^{i+j-1}\ast cx^{k+1}\}\\
  &=&\{\{a\ast b\}_Ax^{i+j} \ast cx^{k}\}x+\{\{a\ast b\}_Ax^{i+j} \ast x\}cx^{k}\\
  &&+\{(1\otimes1)\otimes_Hjb\cdot\alpha(a)x^{i+j} \ast cx^{k}\}x
  +1^{\otimes3}\otimes_{H}c\cdot\alpha\big(jb\cdot\alpha(a)\big)x^{i+j+k+1}\\
  &&+1^{\otimes3}\otimes_{H}c\cdot\delta\big(jb\cdot\alpha(a)\big)x^{i+j+k}
  -\{(1\otimes1)\otimes_Hia\cdot\alpha(b)x^{i+j}\ast cx^{k}\}x\\
  &&-1^{\otimes3}\otimes_{H}c\cdot\alpha\big(ia\cdot\alpha(b)\big)x^{i+j+k+1}
  -1^{\otimes3}\otimes_{H}c\cdot\delta\big(ia\cdot\alpha(b)\big)x^{i+j+k}\\
  &&+\{(1\otimes1)\otimes_Hjb\cdot\delta(a)x^{i+j-1} \ast cx^{k}\}x
  +1^{\otimes3}\otimes_{H}c\cdot\alpha\big(jb\cdot\delta(a)\big)x^{i+j+k}\\
  &&+1^{\otimes3}\otimes_{H}c\cdot\delta\big(jb\cdot\delta(a)\big)x^{i+j+k-1}
  -\{(1\otimes1)\otimes_Hia\cdot\delta(b)x^{i+j-1}\ast cx^{k}\}x\\
  &&-1^{\otimes3}\otimes_{H}c\cdot\alpha\big(ia\cdot\delta(b)\big)x^{i+j+k}
  -1^{\otimes3}\otimes_{H}c\cdot\delta\big(ia\cdot\delta(b)\big)x^{i+j+k-1}\\
  &=&\{\{ax^i\ast bx^j\}\ast cx^{k}\}x+\{\{a\ast b\}_Ax^{i+j} \ast x\}cx^{k}\\
  &&+1^{\otimes3}\otimes_{H}c\cdot\alpha\big(jb\cdot\alpha(a)\big)x^{i+j+k+1}
  +1^{\otimes3}\otimes_{H}c\cdot\delta\big(jb\cdot\alpha(a)\big)x^{i+j+k}\\
  &&-1^{\otimes3}\otimes_{H}c\cdot\alpha\big(ia\cdot\alpha(b)\big)x^{i+j+k+1}
  -1^{\otimes3}\otimes_{H}c\cdot\delta\big(ia\cdot\alpha(b)\big)x^{i+j+k}\\
  &&+1^{\otimes3}\otimes_{H}c\cdot\alpha\big(jb\cdot\delta(a)\big)x^{i+j+k}
  +1^{\otimes3}\otimes_{H}c\cdot\delta\big(jb\cdot\delta(a)\big)x^{i+j+k-1}\\
  &&-1^{\otimes3}\otimes_{H}c\cdot\alpha\big(ia\cdot\delta(b)\big)x^{i+j+k}
  -1^{\otimes3}\otimes_{H}c\cdot\delta\big(ia\cdot\delta(b)\big)x^{i+j+k-1}\\
  &=&\{\{ax^i\ast bx^j\}\ast cx^{k}\}x+\{\{a\ast b\}_Ax^{i+j} \ast x\}cx^{k}\\
  &&+1^{\otimes3}\otimes_{H}\alpha\big(jb\cdot\alpha(a)\big)x^{i+j+1}cx^{k}
  +1^{\otimes3}\otimes_{H}\delta\big(jb\cdot\alpha(a)\big)x^{i+j}cx^{k}\\
  &&-1^{\otimes3}\otimes_{H}\alpha\big(ia\cdot\alpha(b)\big)x^{i+j+1}cx^{k}
  -1^{\otimes3}\otimes_{H}\delta\big(ia\cdot\alpha(b)\big)x^{i+j}cx^{k}\\
  &&+1^{\otimes3}\otimes_{H}\alpha\big(jb\cdot\delta(a)\big)x^{i+j}cx^{k}
  +1^{\otimes3}\otimes_{H}\delta\big(jb\cdot\delta(a)\big)x^{i+j-1}cx^{k}\\
  &&-1^{\otimes3}\otimes_{H}\alpha\big(ia\cdot\delta(b)\big)x^{i+j}cx^{k}
  -1^{\otimes3}\otimes_{H}\delta\big(ia\cdot\delta(b)\big)x^{i+j-1}cx^{k}\\
  &=&\{\{ax^i\ast bx^j\}\ast cx^{k}\}x+\{\{ax^{i}\ast bx^{j}\} \ast x\}cx^{k}.
\end{eqnarray*}
Since
\begin{eqnarray*}
  \{bx^j \ast cx^{k+1}\} &=& \{bx^j \ast cx^{k}\}x+(1\otimes1)\otimes_H c\cdot\alpha(b)x^{j+k+1}
  +(1\otimes1)\otimes_H c\cdot\delta(b)x^{j+k} \\
  &=& \{bx^j \ast cx^{k}\}x+\big((1\otimes1)\otimes_H \alpha(b)x^{j+1}\big)cx^{k}
  +\big((1\otimes1)\otimes_H\delta(b)x^{j}\big)cx^{k} \\
  &=& \{bx^j \ast cx^{k}\}x+\{bx^j \ast x\}cx^{k},
\end{eqnarray*}
suppose $\{b\ast c\}_A=(f_t\otimes1)\otimes_H m_t$, $\{c\ast a\}_A=(g_s\otimes1)\otimes_H n_s$, then we have
\begin{eqnarray*}
  \{\{bx^j \ast cx^{k+1}\}\ast ax^i\} &=& \{(\{bx^j \ast cx^{k}\}x)\ast ax^i\}+\{(\{bx^j \ast x\}cx^{k})\ast ax^i\}  \\
  &=& \{\{bx^j \ast cx^{k}\}\ast ax^i\}x\\
  &&-(f_t\otimes1\otimes1)\otimes_H m_t\cdot\alpha(a)x^{j+k+i+1}-(f_t\otimes1\otimes1)\otimes_H m_t\cdot\delta(a)x^{j+k+i}\\
  &&-1^{\otimes3}\otimes_{H}kc\cdot\alpha(b)\cdot\alpha(a)x^{j+k+i+1}-1^{\otimes3}\otimes_{H}kc\cdot\alpha(b)\cdot\delta(a)x^{j+k+i}\\
  &&+1^{\otimes3}\otimes_{H}jb\cdot\alpha(c)\cdot\alpha(a)x^{j+k+i+1}+1^{\otimes3}\otimes_{H}jb\cdot\alpha(c)\cdot\delta(a)x^{j+k+i}\\
  &&-1^{\otimes3}\otimes_{H}kc\cdot\delta(b)\cdot\alpha(a)x^{j+k+i}-1^{\otimes3}\otimes_{H}kc\cdot\delta(b)\cdot\delta(a)x^{j+k+i-1}\\
  &&+1^{\otimes3}\otimes_{H}jb\cdot\delta(c)\cdot\alpha(a)x^{j+k+i}+1^{\otimes3}\otimes_{H}jb\cdot\delta(c)\cdot\delta(a)x^{j+k+i-1}\\
  &&+\{\{bx^j \ast x\}\ast ax^i\}cx^{k}\\
  &&+\big(g_{s(1)(1)}\otimes g_{s(1)(2)}\otimes1\big)\otimes_H(g_{s(-2)}\alpha(b))\cdot n_sx^{j+k+i+1}\\
  &&+1^{\otimes3}\otimes_{H}ia\cdot\alpha(b)\cdot\alpha(c)x^{j+k+i+1}+1^{\otimes3}\otimes_{H}ia\cdot\alpha(b)\cdot\delta(c)x^{j+k+i}\\
  &&-1^{\otimes3}\otimes_{H}k(\alpha(b)\cdot c)\cdot\alpha(a)x^{j+k+i+1}
  -1^{\otimes3}\otimes_{H}k(\alpha(b)\cdot c)\cdot\delta(a)x^{j+k+i}\\
  &&+\big(g_{s(1)(1)}\otimes g_{s(1)(2)}\otimes1\big)\otimes_H(g_{s(-2)}\delta(b))\cdot n_sx^{j+k+i}\\
  &&+1^{\otimes3}\otimes_{H}ia\cdot\delta(b)\cdot\alpha(c)x^{j+k+i}+1^{\otimes3}\otimes_{H}ia\cdot\delta(b)\cdot\delta(c)x^{j+k+i-1}\\
  &&-1^{\otimes3}\otimes_{H}k(\delta(b)\cdot c)\cdot\alpha(a)x^{j+k+i}
  -1^{\otimes3}\otimes_{H}k(\delta(b)\cdot c)\cdot\delta(a)x^{j+k+i-1},
\end{eqnarray*}
and
\begin{eqnarray*}
  \{\{ax^i \ast cx^{k+1}\}\ast bx^j\} &=& \{(\{ax^i \ast cx^{k}\}x)\ast bx^j\}+\{(\{ax^i \ast x\}cx^{k})\ast bx^j\}  \\
  &=& \{\{ax^i \ast cx^{k}\}\ast bx^j\}x\\
  &&+(1\otimes g_s\otimes1)\otimes_H n_s\cdot\alpha(b)x^{j+k+i+1}+(1\otimes g_s\otimes1)\otimes_H n_s\cdot\delta(b)x^{j+k+i}\\
  &&-1^{\otimes3}\otimes_{H}kc\cdot\alpha(a)\cdot\alpha(b)x^{j+k+i+1}-1^{\otimes3}\otimes_{H}kc\cdot\alpha(a)\cdot\delta(b)x^{j+k+i}\\
  &&+1^{\otimes3}\otimes_{H}ia\cdot\alpha(c)\cdot\alpha(b)x^{j+k+i+1}+1^{\otimes3}\otimes_{H}ia\cdot\alpha(c)\cdot\delta(b)x^{j+k+i}\\
  &&-1^{\otimes3}\otimes_{H}kc\cdot\delta(a)\cdot\alpha(b)x^{j+k+i}-1^{\otimes3}\otimes_{H}kc\cdot\delta(a)\cdot\delta(b)x^{j+k+i-1}\\
  &&+1^{\otimes3}\otimes_{H}ia\cdot\delta(c)\cdot\alpha(b)x^{j+k+i}+1^{\otimes3}\otimes_{H}ia\cdot\delta(c)\cdot\delta(b)x^{j+k+i-1}\\
  &&+\{\{ax^i \ast x\}\ast bx^j\}cx^{k}\\
  &&-(1\otimes 1\otimes f_t)\otimes_H\alpha(a)\cdot m_tx^{j+k+i+1}\\
  &&+1^{\otimes3}\otimes_{H}jb\cdot\alpha(a)\cdot\alpha(c)x^{j+k+i+1}+1^{\otimes3}\otimes_{H}jb\cdot\alpha(a)\cdot\delta(c)x^{j+k+i}\\
  &&-1^{\otimes3}\otimes_{H}k(\alpha(a)\cdot c)\cdot\alpha(b)x^{j+k+i+1}
  -1^{\otimes3}\otimes_{H}k(\alpha(a)\cdot c)\cdot\delta(b)x^{j+k+i}\\
  &&-(1\otimes 1\otimes f_t)\otimes_H\delta(a)\cdot m_tx^{j+k+i}\\
  &&+1^{\otimes3}\otimes_{H}jb\cdot\delta(a)\cdot\alpha(c)x^{j+k+i}+1^{\otimes3}\otimes_{H}jb\cdot\delta(a)\cdot\delta(c)x^{j+k+i-1}\\
  &&-1^{\otimes3}\otimes_{H}k(\delta(a)\cdot c)\cdot\alpha(b)x^{j+k+i}
  -1^{\otimes3}\otimes_{H}k(\delta(a)\cdot c)\cdot\delta(b)x^{j+k+i-1}.
\end{eqnarray*}
Therefore, we have
\begin{eqnarray*}
  && \{\{ax^i\ast bx^j\}\ast cx^{k+1}\}-\{ax^i\ast\{bx^j \ast cx^{k+1}\}\}+\big((\sigma\otimes\mathrm{id})\otimes_H \mathrm{id}\big)\{bx^j\ast\{ax^i \ast cx^{k+1}\}\} \\
  &=& \{\{ax^i\ast bx^j\}\ast cx^{k+1}\}+\big((\sigma\otimes\mathrm{id})\otimes_H \mathrm{id}\big)\big((\mathrm{id}\otimes\sigma)\otimes_H \mathrm{id}\big)\{\{bx^j \ast cx^{k+1}\}\ast ax^i\}\\
  &&-\big((\mathrm{id}\otimes\sigma)\otimes_H\mathrm{id}\big)\{\{ax^i \ast cx^{k+1}\}\ast bx^j\}\\
  &=&\{\{ax^i\ast bx^j\}\ast cx^{k}\}x+\{\{ax^{i}\ast bx^{j}\} \ast x\}cx^{k}\\
  &&+ \big((\sigma\otimes\mathrm{id})\otimes_H \mathrm{id}\big)\big((\mathrm{id}\otimes\sigma)\otimes_H \mathrm{id}\big)\{\{bx^j \ast cx^{k}\}\ast ax^i\}x\\
  &&-(1\otimes f_t\otimes1)\otimes_H m_t\cdot\alpha(a)x^{j+k+i+1}-(1\otimes f_t\otimes1)\otimes_H m_t\cdot\delta(a)x^{j+k+i}\\
  &&-1^{\otimes3}\otimes_{H}kc\cdot\alpha(b)\cdot\alpha(a)x^{j+k+i+1}-1^{\otimes3}\otimes_{H}kc\cdot\alpha(b)\cdot\delta(a)x^{j+k+i}\\
  &&+1^{\otimes3}\otimes_{H}jb\cdot\alpha(c)\cdot\alpha(a)x^{j+k+i+1}+1^{\otimes3}\otimes_{H}jb\cdot\alpha(c)\cdot\delta(a)x^{j+k+i}\\
  &&-1^{\otimes3}\otimes_{H}kc\cdot\delta(b)\cdot\alpha(a)x^{j+k+i}-1^{\otimes3}\otimes_{H}kc\cdot\delta(b)\cdot\delta(a)x^{j+k+i-1}\\
  &&+1^{\otimes3}\otimes_{H}jb\cdot\delta(c)\cdot\alpha(a)x^{j+k+i}+1^{\otimes3}\otimes_{H}jb\cdot\delta(c)\cdot\delta(a)x^{j+k+i-1}\\
  &&+\big((\sigma\otimes\mathrm{id})\otimes_H \mathrm{id}\big)\big((\mathrm{id}\otimes\sigma)\otimes_H \mathrm{id}\big)\{\{bx^j \ast x\}\ast ax^i\}cx^{k}\\
  &&+\big(1\otimes g_{s(1)(1)}\otimes g_{s(1)(2)}\big)\otimes_H(g_{s(-2)}\alpha(b))\cdot n_sx^{j+k+i+1}\\
  &&+1^{\otimes3}\otimes_{H}ia\cdot\alpha(b)\cdot\alpha(c)x^{j+k+i+1}+1^{\otimes3}\otimes_{H}ia\cdot\alpha(b)\cdot\delta(c)x^{j+k+i}\\
  &&-1^{\otimes3}\otimes_{H}k(\alpha(b)\cdot c)\cdot\alpha(a)x^{j+k+i+1}
  -1^{\otimes3}\otimes_{H}k(\alpha(b)\cdot c)\cdot\delta(a)x^{j+k+i}\\
  &&+\big(1\otimes g_{s(1)(1)}\otimes g_{s(1)(2)}\big)\otimes_H(g_{s(-2)}\delta(b))\cdot n_sx^{j+k+i}\\
  &&+1^{\otimes3}\otimes_{H}ia\cdot\delta(b)\cdot\alpha(c)x^{j+k+i}+1^{\otimes3}\otimes_{H}ia\cdot\delta(b)\cdot\delta(c)x^{j+k+i-1}\\
  &&-1^{\otimes3}\otimes_{H}k(\delta(b)\cdot c)\cdot\alpha(a)x^{j+k+i}
  -1^{\otimes3}\otimes_{H}k(\delta(b)\cdot c)\cdot\delta(a)x^{j+k+i-1}\\
  &&-\big((\mathrm{id}\otimes\sigma)\otimes_H\mathrm{id}\big)\{\{ax^i \ast cx^{k}\}\ast bx^j\}x\\
  &&-\big((1\otimes 1\otimes g_s)\otimes_H n_s\cdot\alpha(b)\big)x^{j+k+i+1}-\big((1\otimes 1\otimes g_s)\otimes_H n_s\cdot\delta(b)\big)x^{j+k+i}\\
  &&+1^{\otimes3}\otimes_{H}kc\cdot\alpha(a)\cdot\alpha(b)x^{j+k+i+1}+1^{\otimes3}\otimes_{H}kc\cdot\alpha(a)\cdot\delta(b)x^{j+k+i}\\
  &&-1^{\otimes3}\otimes_{H}ia\cdot\alpha(c)\cdot\alpha(b)x^{j+k+i+1}-1^{\otimes3}\otimes_{H}ia\cdot\alpha(c)\cdot\delta(b)x^{j+k+i}\\
  &&+1^{\otimes3}\otimes_{H}kc\cdot\delta(a)\cdot\alpha(b)x^{j+k+i}+1^{\otimes3}\otimes_{H}kc\cdot\delta(a)\cdot\delta(b)x^{j+k+i-1}\\
  &&-1^{\otimes3}\otimes_{H}ia\cdot\delta(c)\cdot\alpha(b)x^{j+k+i}-1^{\otimes3}\otimes_{H}ia\cdot\delta(c)\cdot\delta(b)x^{j+k+i-1}\\
  &&-\big((\mathrm{id}\otimes\sigma)\otimes_H\mathrm{id}\big)\{\{ax^i \ast x\}\ast bx^j\}cx^{k}\\
  &&+(1\otimes f_t\otimes 1)\otimes_H\alpha(a)\cdot m_tx^{j+k+i+1}+(1\otimes f_t\otimes 1)\otimes_H\delta(a)\cdot m_tx^{j+k+i}\\
  &&-1^{\otimes3}\otimes_{H}jb\cdot\alpha(a)\cdot\alpha(c)x^{j+k+i+1}-1^{\otimes3}\otimes_{H}jb\cdot\alpha(a)\cdot\delta(c)x^{j+k+i}\\
  &&+1^{\otimes3}\otimes_{H}k(\alpha(a)\cdot c)\cdot\alpha(b)x^{j+k+i+1}
  +1^{\otimes3}\otimes_{H}k(\alpha(a)\cdot c)\cdot\delta(b)x^{j+k+i}\\
  &&-1^{\otimes3}\otimes_{H}jb\cdot\delta(a)\cdot\alpha(c)x^{j+k+i}-1^{\otimes3}\otimes_{H}jb\cdot\delta(a)\cdot\delta(c)x^{j+k+i-1}\\
  &&+1^{\otimes3}\otimes_{H}k(\delta(a)\cdot c)\cdot\alpha(b)x^{j+k+i}
  +1^{\otimes3}\otimes_{H}k(\delta(a)\cdot c)\cdot\delta(b)x^{j+k+i-1}\\
  &=&\big(\{\{ax^i\ast bx^j\}\ast cx^{k}\}+\big((\sigma\otimes\mathrm{id})\otimes_H \mathrm{id}\big)\big((\mathrm{id}\otimes\sigma)\otimes_H \mathrm{id}\big)\{\{bx^j \ast cx^{k}\}\ast ax^i\}\\
  &&-\big((\mathrm{id}\otimes\sigma)\otimes_H\mathrm{id}\big)\{\{ax^i \ast cx^{k}\}\ast bx^j\}\big)x\\
  &&+\big(\{\{ax^i\ast bx^j\}\ast x\}+\big((\sigma\otimes\mathrm{id})\otimes_H \mathrm{id}\big)\big((\mathrm{id}\otimes\sigma)\otimes_H \mathrm{id}\big)\{\{bx^j \ast x\}\ast ax^i\}\\
  &&-\big((\mathrm{id}\otimes\sigma)\otimes_H\mathrm{id}\big)\{\{ax^i \ast x\}\ast bx^j\}\big)cx^{k}\\
  &&+\big(1\otimes g_{s(1)(1)}\otimes g_{s(1)(2)}\big)\otimes_H(g_{s(-2)}\alpha(b))\cdot n_sx^{j+k+i+1}\\
  &&+\big(1\otimes g_{s(1)(1)}\otimes g_{s(1)(2)}\big)\otimes_H(g_{s(-2)}\delta(b))\cdot n_sx^{j+k+i}\\
  &&-\big((1\otimes 1\otimes g_s)\otimes_H n_s\cdot\alpha(b)\big)x^{j+k+i+1}-\big((1\otimes 1\otimes g_s)\otimes_H n_s\cdot\delta(b)\big)x^{j+k+i}\\
  &=&\big(\{\{ax^i\ast bx^j\}\ast cx^{k}\}+\big((\sigma\otimes\mathrm{id})\otimes_H \mathrm{id}\big)\big((\mathrm{id}\otimes\sigma)\otimes_H \mathrm{id}\big)\{\{bx^j \ast cx^{k}\}\ast ax^i\}\\
  &&-\big((\mathrm{id}\otimes\sigma)\otimes_H\mathrm{id}\big)\{\{ax^i \ast cx^{k}\}\ast bx^j\}\big)x\\
  &&+\big(\{\{ax^i\ast bx^j\}\ast x\}+\big((\sigma\otimes\mathrm{id})\otimes_H \mathrm{id}\big)\big((\mathrm{id}\otimes\sigma)\otimes_H \mathrm{id}\big)\{\{bx^j \ast x\}\ast ax^i\}\\
  &&-\big((\mathrm{id}\otimes\sigma)\otimes_H\mathrm{id}\big)\{\{ax^i \ast x\}\ast bx^j\}\big)cx^{k}\\
  &&+\big(1\otimes g_{s(1)}\otimes g_{s(2)}\big)\otimes_H(g_{s(-3)}\alpha(b))\cdot n_sx^{j+k+i+1}
  +\big(1\otimes g_{s(1)}\otimes g_{s(2)}\big)\otimes_H(g_{s(-3)}\delta(b))\cdot n_sx^{j+k+i}\\
  &&-\big((1\otimes g_{s(1)}\otimes g_{s(2)}\big)\otimes_H n_s\cdot\alpha(b)(g_{s(-3)}x^{j+k+i+1})-\big(1\otimes g_{s(1)}\otimes g_{s(2)}\big)\otimes_H n_s\cdot\delta(b)(g_{s(-3)}x^{j+k+i})\\
  &=&0.
\end{eqnarray*}

Hence the conclusion is completed.
\end{proof}
The Poisson $H$-pseudoalgebra $A[x]$ endowed with the Poisson pseudobracket from Theorem \ref{thm:Poisson-pseudo-poly-ring} is denoted by $A[x;\alpha,\delta]_{PP}$ and called {\bf Poisson pseudo-Ore extension} of $(A,\cdot,\{\ast\}_A)$.

\section{Some examples of Poisson $H$-pseudoalgebras}\label{sec:examples}

The notions of current $H$-pseudoalgebras and annihilation algebras of $H$-pseudoalgebras have been proposed in \cite{Bakalov-2001-1}. This section builds on that work and aims to gives some examples of Poisson $H$-pseudoalgebras.  

\subsection{current $H$-pseudoalgebras}\label{subsec:current}
\begin{ex}\label{ex:current}
$($\cite[Section 4]{Bakalov-2001-1}$)$
Let $H'$ be a Hopf subalgebra of $H$ and $(A,\ast_A)$ be an $H'$-pseudoalgebra, for all $a,b\in A$ and $a\ast_A b=\sum\limits_{i}(f_i\otimes g_i)\otimes_{H'} e_i$, if we define
\begin{equation*}
  (f\otimes_{H'} a)\ast (g\otimes_{H'} b)=\sum\limits_{i}(ff_i\otimes gg_i)\otimes_H(1\otimes_{H'} e_i),
\end{equation*}
then $(Cur A:~=H\otimes_{H'} A,\ast)$ is an $H$-pseudoalgebra which is Lie or associative when $(A,\ast_A)$ is so.
$(Cur A,\ast)$ is called a {\bf current pseudoalgebra} of $(A,\ast_A)$.
\end{ex}
Specially, when $H'=\mathbf{k}$, let $(A,\{\cdot,\cdot\})$ be a Lie algebra. For all $a, b\in A$ and $f, g\in H$, if we define
\begin{equation}\label{eq:current-Lie}
  \{(f\otimes a)\ast(g\otimes b)\}=(f\otimes g)\otimes_H(1\otimes\{a,b\}),
\end{equation}
then $(Cur A:~=H\otimes A, \{\ast\})$ is a Lie $H$-pseudoalgebra. 


\begin{ex}
Let $(A,\cdot,\{\cdot,\cdot\})$ be a commutative Poisson algebra and $(Cur A=H\otimes_H A, \{\ast\})$ be a Lie $H$-pseudoalgebra defined by \eqref{eq:current-Lie}. For all $f, g, h \in H$ and $a, b\in A$, let algebraic structure in $H$ be commutative, i. e., $gh=hg$, if we define
\begin{eqnarray*}
   h(fg)&:=& (h_{(1)}f)(h_{(2)}g),\\
   (f\otimes gh)\otimes_H(1\otimes a)&:=&(f\otimes g_{(1)})\otimes_H g_{(-2)}h(1\otimes a) \\
  (f\otimes_H a)\bullet(g\otimes_H b) &:=& fg\otimes_H (a\cdot b), \\
  \big((f\otimes g)\otimes_H (1\otimes a)\big)\bullet(h\otimes_H b)&:=& 
  (f\otimes g_{(1)})\otimes_H (g_{(-2)}h\otimes_H a\cdot b),
\end{eqnarray*}
then $(Cur A, \bullet, \{\ast\})$ is a Poisson $H$-pseudoalgebra.
\end{ex}

\begin{proof}
It suffices to prove that $(Cur A, \bullet, \{\ast\})$ satisfies $H$-differential \eqref{eq:H-differential} and left Leibniz rule \eqref{eq:left-Leibniz-rule}.

For all $f, g, h \in H$ and $a, b, c\in A$, $H$-differential is true since
\begin{eqnarray*}
  h\big((f\otimes_H a)\bullet(g\otimes_H b)\big) &=& h\big(fg\otimes_H (a\cdot b)\big)=h(fg)\otimes_H(a\cdot b) 
  =(h_{(1)}f)(h_{(2)}g)\otimes_H (a\cdot b)\\
  &=&(h_{(1)}f\otimes_Ha)\bullet(h_{(2)}g\otimes_Hb)=\big(h_{(1)}(f\otimes_H a)\big)\bullet\big(h_{(2)}(g\otimes_H b)\big),
\end{eqnarray*}
left Leibniz rule follows from
\begin{eqnarray*}
  &&\{(f\otimes_H a)\ast\big((g\otimes_H b)\bullet (h\otimes_H c)\big)\} = \{(f\otimes_H a)\ast \big(gh\otimes_H (b\cdot c)\big)\}
  =(f\otimes gh)\otimes_H\big(1\otimes\{a,b\cdot c\}\big) \\
  &=& (f\otimes gh)\otimes_H\big(1\otimes\{a,b\}\cdot c\big)+(f\otimes gh)\otimes_H\big(1\otimes b\cdot\{a,c\}\big)\\
  &=& (f\otimes gh)\otimes_H\big(1\otimes\{a,b\}\cdot c\big)+(f\otimes hg)\otimes_H\big(1\otimes\{a,c\}\cdot b\big)\\
  &=&(f\otimes g_{(1)})\otimes_H \big(g_{(-2)}h\otimes\{a,b\}\cdot c\big)
  +(f\otimes h_{(1)})\otimes_H \big(h_{(-2)}g\otimes_H\{a,c\}\cdot b\big)\\
  &=&\big((f\otimes g)\otimes_H(1\otimes\{a,b\})\big)\bullet(h\otimes_H c)+\big((f\otimes h)\otimes_H(1\otimes\{a,c\})\big)\bullet(g\otimes_H b)\\
  &=&\{(f\otimes_H a)\ast(g\otimes_H b)\}\bullet(h\otimes_H c)+\{(f\otimes_H a)\ast(h\otimes_H c)\}\bullet(g\otimes_H b).
\end{eqnarray*}
\end{proof}

\subsection{$H$-differential algebras and annihilation algebras}\label{subsec:differ-annihi}

\begin{defi}
$($\cite[Section 2]{Bakalov-2001-1}$)$
Let $(A,\cdot)$ be an associative algebra, for all $h\in H$ and $a, b\in A$, if $A$ is a left $H$-module satisfies
\begin{equation}\label{eq:left-H-differ-alg}
  h(a\cdot b)=(h_{(1)}a)\cdot(h_{(2)}b),
\end{equation}
then $(A,\cdot)$ is called an {\bf $H$-differential algebra}.
\end{defi}
Similarly, the right action of $H$ satisfies
\begin{equation}\label{eq:right-H-differ-alg}
  (a\cdot b)h=(ah_{(1)})\cdot(bh_{(2)}),\quad\forall h\in H,~a, b\in A,
\end{equation}

Let $Y$ be an $H$-bimodule and $(Y,\cdot)$ be a commutative associative $H$-differential algebra satisfying \eqref{eq:left-H-differ-alg} and \eqref{eq:right-H-differ-alg}. Let $L$ be a left $H$-module and $A_YL:~=Y\otimes_H L$. A left action of $H$ on $A_YL$ is defined by
\begin{equation*}
  h(x\otimes_H a)=hx\otimes_H a,\quad\forall h\in H,~x\in Y,~a\in L.
\end{equation*}
Let $(L,\ast)$ be an $H$-pseudoalgebra, for all $x, y\in Y,~a, b\in L$ and $a\ast b=\sum\limits_i(f_i\otimes g_i)\otimes_H e_i$, a product on $A_YL$ is defined by
\begin{equation*}
  (x\otimes_H a)(y\otimes_H b)=\sum\limits_i(xf_i)\cdot(yg_i)\otimes_H e_i.
\end{equation*}

Specially, when $Y$ is the dual algebra of $H$, it induces the following notions of the annihilation algebras.
\begin{defi}
$($\cite[Section 7]{Bakalov-2001-1}$)$
Let $(L,\ast)$ be an $H$-pseudoalgebra and $X:~=H^{\ast}$ is a dual algebra of $H$, the $H$-differential algebra $A_XL:~=X\otimes_H L$ is called an {\bf annihilation algebra} of $(L,\ast)$. 
\end{defi}

\begin{defi}
Let $(L,\{\ast\})$ be a Lie $H$-pseudoalgebra and $X:~=H^{\ast}$ is a dual algebra of $H$, the $H$-differential algebra $A_XL:~=X\otimes_H L$ is called an {\bf annihilation Lie algebra} of $(L,\{\ast\})$, where Lie bracket $\{\cdot,\cdot\}$ on $A_XL$ is defined as follows:
\begin{equation}\label{eq:annihi-Lie-alg}
  \{x\otimes_H a, y\otimes_H b\}=\sum\limits_i(xf_i)(yg_i)\otimes_H e_i,
\end{equation}
for all $a, b\in L,~x, y\in X$, and $\{a\ast b\}=\sum\limits_i(f_i\otimes g_i)\otimes_H e_i$.

\end{defi}

\begin{pro}\label{pro:annihi-Poi-alg}
Let $(L,\cdot,\{\ast\})$ be a commutative Poisson $H$-pseudoalgebra and $A_XL:~=X\otimes_H L$ be an annihilation Lie algebra of $(L,\{\ast\})$, where Lie bracket $\{\cdot,\cdot\}$ on $A_XL$ is defined by \eqref{eq:annihi-Lie-alg}. For all $f,g \in H,~x,y,z\in X$ and $a, b, c \in L$, if we define
\begin{eqnarray*}
  (x\otimes_{H} a)\bullet (y\otimes_{H} b) &:=& xy\otimes_{H} (a\cdot b), \\
 \big((xf)(yg)\otimes_H a\big)\bullet(z\otimes_H c)&:=& (xf)((zy)g_{(1)})\otimes_H a\cdot(g_{(-2)}c),
\end{eqnarray*}
then $(A_XL,\bullet,\{\cdot,\cdot\})$ is a commutative Poisson algebra.
\end{pro}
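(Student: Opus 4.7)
The plan is to verify the three structural requirements of a commutative Poisson algebra: (a) $(A_XL, \bullet)$ is a well-defined commutative associative algebra, (b) $(A_XL, \{\cdot,\cdot\})$ is a Lie algebra, and (c) the Leibniz identity $\{u, v \bullet w\} = \{u, v\} \bullet w + v \bullet \{u, w\}$ holds for all $u, v, w \in A_XL$. Since (b) is already provided by the annihilation Lie algebra construction via \eqref{eq:annihi-Lie-alg}, only (a) and (c) need to be established.

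For (a), I would first check that the product $\bullet$ descends through the $H$-balanced tensor product $X \otimes_H L$. Since $X = H^{\ast}$ is an $H$-bimodule whose commutative algebra structure is compatible with the $H$-action through $h(xy) = (h_{(1)}x)(h_{(2)}y)$, and since $L$ satisfies the $H$-differential rule \eqref{eq:H-differential}, the balancing relation $xh \otimes_H a = x \otimes_H ha$ is respected by the formula $(x \otimes_H a) \bullet (y \otimes_H b) = xy \otimes_H (a \cdot b)$ via a direct Sweedler expansion of $\Delta(h)$. Commutativity of $\bullet$ then follows from the commutativity of $X$ (forced by the cocommutativity of $H$) together with the commutativity of $(L, \cdot)$ assumed in the hypothesis, and associativity follows from that of $X$ and of $L$. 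The second displayed formula for $\bullet$ is a specialization to elements of the form $(xf)(yg) \otimes_H a$, which is precisely the shape of outputs of the pseudobracket \eqref{eq:annihi-Lie-alg}; its consistency with the first formula follows by moving the $H$-action across $\otimes_H$ using cocommutativity and \eqref{eq:H-differential}.

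The main work is (c). Writing $u = z \otimes_H c$, $v = x \otimes_H a$, $w = y \otimes_H b$, the left-hand side $\{u, v \bullet w\} = \{z \otimes_H c,\, xy \otimes_H (a \cdot b)\}$ expands via \eqref{eq:annihi-Lie-alg} into an annihilation bracket controlled by the pseudobracket $\{c \ast (a \cdot b)\}$ in $L$. Applying the left Leibniz rule \eqref{eq:left-Leibniz-rule} gives $\{c \ast a\} \cdot b + \{c \ast b\} \cdot a$, unfolded through \eqref{eq:left-Leibniz}. On the other side, both $\{u, v\}$ and $\{u, w\}$ are already in the form $(zf)(xg) \otimes_H e$ to which the second definition of $\bullet$ directly applies, and the task is to match the two resulting expressions term by term in $X \otimes_H L$.

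The principal obstacle is the Sweedler-index bookkeeping inside (c): the twist $a \cdot (g_{(-2)} c)$ that appears in the second rule for $\bullet$ must be identified, after passing through \eqref{eq:annihi-Lie-alg}, with the twist $e_i \cdot (g_{i(-2)} b)$ produced by \eqref{eq:left-Leibniz} in the expansion of $\{c \ast a\} \cdot b$, and similarly for the other half $\{c \ast b\} \cdot a$. Once this matching is carried out, the identity collapses to an equality of tensors using associativity and commutativity of the product in $X$ together with commutativity of $(L, \cdot)$. In other words, the Leibniz rule for $\bullet$ in $A_XL$ is nothing more than the left Leibniz rule of the ambient Poisson $H$-pseudoalgebra $L$ pushed forward along the annihilation functor $X \otimes_H -$, and the technical content is simply making the transported indices agree.
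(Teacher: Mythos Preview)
Your proposal is correct and follows essentially the same approach as the paper: the paper declares commutativity and associativity of $(A_XL,\bullet)$ obvious and then verifies the Leibniz rule by expanding $\{x\otimes_H a,\,(y\otimes_H b)\bullet(z\otimes_H c)\}$ via \eqref{eq:annihi-Lie-alg} and the left Leibniz rule \eqref{eq:left-Leibniz-rule}--\eqref{eq:left-Leibniz}, matching the result against the second displayed formula for $\bullet$ exactly as you describe. You are in fact slightly more thorough than the paper in discussing well-definedness of $\bullet$ across $\otimes_H$ and the consistency of the two displayed formulas, both of which the paper omits.
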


\begin{proof}
The commutativity of associative algebra $(A_XL,\bullet)$ is obvious.
For $\{a\ast b\}=\sum\limits_i(f_i\otimes g_i)\otimes_H e_i$, $\{a\ast c\}=\sum\limits_j(h_j\otimes l_j)\otimes_H d_j$, by \eqref{eq:left-Leibniz-rule} and \eqref{eq:left-Leibniz}, we have
\begin{equation*}
  \{a\ast (b\cdot c)\}=\sum\limits_i(f_i\otimes g_{i(1)})\otimes_H e_i\cdot(g_{i(-2)}c)+\sum\limits_j(h_j\otimes l_{j(1)})\otimes_H d_j\cdot(l_{j(-2)}b),
\end{equation*}
by \eqref{eq:annihi-Lie-alg}, we have
\begin{eqnarray*}
  &&\{x\otimes_H a, (y\otimes_{H} b)\bullet (z\otimes_{H} c)\} = \{x\otimes_H a, yz\otimes_{H} (b\cdot c)\} \\
  &&=  \sum\limits_i(xf_i)((yz)g_{i(1)})\otimes_H e_i\cdot(g_{i(-2)}c)+\sum\limits_j(xh_j)((yz)l_{j(1)})\otimes_H d_j\cdot(l_{j(-2)}b)\\
  &&=\sum\limits_i(xf_i)((zy)g_{i(1)})\otimes_H e_i\cdot(g_{i(-2)}c)+\sum\limits_j(xh_j)((yz)l_{j(1)})\otimes_H d_j\cdot(l_{j(-2)}b)\\
  &&=\sum\limits_i\big((xf_i)(yg_i)\otimes_H e_i\big) \bullet(z\otimes_H c)
  +\sum\limits_j\big((xh_j)(zl_j)\otimes_H d_j\big) \bullet(y\otimes_H b) \\
  &&=\{x\otimes_H a, y\otimes_H b\}\bullet(z\otimes_H c)+\{x\otimes_H a, z\otimes_H c\}\bullet(y\otimes_H b) \\
  &&=\{x\otimes_H a, y\otimes_H b\}\bullet(z\otimes_H c)+(y\otimes_H b)\bullet \{x\otimes_H a, z\otimes_H c\}. 
\end{eqnarray*}
Therefore, the proof is completed.

\end{proof}

\section{DGP pseudoalgebras and their universal enveloping $H$-pseudoalgebras}\label{sec:DGPpa-Uni}

We consider differential graded Poisson algebras and their universal enveloping algebras defined in \cite[Section 3$\sim$4]{Lv-2016-4} in a pseudotensor category $\mathcal{M}^{\ast}(H)$, inducing the notions of differential graded Poisson $H$-pseudoalgebras and their universal enveloping $H$-pseudoalgebras.
We consider the Poisson pseudobracket of degree $p$.

\subsection{DGP pseudoalgebras}\label{sec:DGPpa}

\begin{defi}
Let $A=\bigoplus_{i\in \mathbb{Z}} A_i$ be a graded left $H$-module, $\ast:~A\otimes A\to (H\otimes H)\otimes_H A$ be an $H$-bilinear map and $d:~A\rightarrow A$ be an $H$-linear map of degree $1$ satisfying
\begin{enumerate} 
\item (differential) $d^2=0$; and 
\item $\big((\mathrm{id}\otimes\mathrm{id})\otimes_H d\big)(a\ast b)=d(a)\ast b+(-1)^{|a|}a\ast d(b)$, for all homogeneous elements $a, b\in A$, 
\end{enumerate} 
then $(A, \ast, d)$ is called a {\bf differential graded $H$-pseudoalgebra} (or DG pseudoalgebra).
Moreover, if $a\ast b=(-1)^{|a||b|}(\sigma\otimes_H\mathrm{id})(b\ast a)$, for all homogeneous elements $a, b\in A$, then DG pseudoalgebra $(A, \ast, d)$ is called {\bf graded commutative}.
\end{defi}

\begin{defi}
Let $A=\bigoplus_{i\in \mathbb{Z}} A_i$ be a graded left $H$-module, $\{\ast\}:~A\otimes A\to (H\otimes H)\otimes_H A$ be an $H$-bilinear map of degree $p$ and $d:~A\to A$ be an $H$-linear map of degree $1$ ($d^2=0$) such that 
\begin{enumerate} 
\item (skew-symmetry) $\{a\ast b\}=-(-1)^{(|a|+p)(|b|+p)}(\sigma\otimes_H\mathrm{id})\{b\ast a\}$; 
\item (Jacobi identity) $\{a\ast\{b\ast c\}\} -(-1)^{(|a|+p)(|b|+p)}\big((\sigma\otimes\mathrm{id})\otimes_H\mathrm{id}\big)\{b\ast\{a\ast c\}\}=\{\{a\ast b\}\ast c\}$; and 
\item $\big((\mathrm{id}\otimes\mathrm{id})\otimes_H d\big)\{a\ast b\}=\{d(a)\ast b\}+(-1)^{(|a|+p)}\{a\ast d(b)\}$, for all homogeneous elements $a, b, c\in A$, 
\end{enumerate} 
then $(A, \{\ast\}, d)$ is called a {\bf differential graded Lie $H$-pseudoalgebra} (or DGL pseudoalgebra).
\end{defi}

\begin{defi}
Let $(A,\cdot,d)$ be a differential graded algebra and $(A,\{\ast\},d)$ be a DGL pseudoalgebra satisfying $H$-differential \eqref{eq:H-differential} and {\bf graded left Leibniz rule}
\begin{equation}\label{eq:graded-left-Leibniz-rule}
  \{a\ast (b\cdot c)\}=\{a\ast b\}\cdot c+(-1)^{(|c|+p)|b|}\{a\ast c\}\cdot b,
\end{equation}
for all homogeneous elements $a, b, c\in A$, then $(A, \cdot, \{\ast\}, d)$ is called a {\bf differential graded Poisson $H$-pseudoalgebra} (or DGP pseudoalgebra).
Moreover, if $(A,\cdot,d)$ is graded commutative, then $(A,\cdot,\{\ast\}, d)$ is a {\bf graded commutative DGP pseudoalgebra}.
\end{defi}

\begin{lem}
Let $(A,\cdot,\{\ast\}, d)$ be a DGP pseudoalgebra, for all homogeneous elements $a, b, c\in A$, similar to right Leibniz rule \eqref{eq:right-Leibniz-rule} in Lemma \ref{lem:right-Leibniz-rule}, then we have the {\bf graded right Leibniz rule}
\begin{equation}\label{eq:graded-right-Leibniz-rule}
  \{(a\cdot b)\ast c\}=a\cdot\{b\ast c\}+(-1)^{(|a|+p)|b|}b\cdot\{a\ast c\}.
\end{equation}
\end{lem}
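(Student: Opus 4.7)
The plan is to mimic the derivation of the un-graded right Leibniz rule (Lemma \ref{lem:right-Leibniz-rule}) in the graded setting by combining the graded left Leibniz rule \eqref{eq:graded-left-Leibniz-rule} with graded skew-symmetry. All the required tools are in place: the skew-symmetry axiom $(1)$ of the underlying DGL pseudoalgebra, the graded left Leibniz rule \eqref{eq:graded-left-Leibniz-rule}, and, paralleling the commutativity hypothesis used implicitly in Lemma \ref{lem:right-Leibniz-rule}, the graded commutativity of $(A,\cdot,d)$.

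First I would flip the bracket on the left-hand side via graded skew-symmetry,
\[\{(a\cdot b)\ast c\}=-(-1)^{(|a|+|b|+p)(|c|+p)}(\sigma\otimes_H\mathrm{id})\{c\ast(a\cdot b)\},\]
and then expand the inner bracket by the graded left Leibniz rule,
\[\{c\ast(a\cdot b)\}=\{c\ast a\}\cdot b+(-1)^{(|b|+p)|a|}\{c\ast b\}\cdot a.\]
Applying graded skew-symmetry once more, now to each of $\{c\ast a\}$ and $\{c\ast b\}$, restores $\{a\ast c\}$ and $\{b\ast c\}$ respectively on the inside, and leaves only the sign bookkeeping to do.

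The main obstacle is propagating the permutation $\sigma$ through the right multiplication operation $\cdot b$ (defined by \eqref{eqs:define-products1}) and identifying the result with the left multiplication operation $b\cdot(-)$ (defined by \eqref{eqs:define-products2}). Writing $\{a\ast c\}=\sum_i(f_i\otimes g_i)\otimes_H e_i$ with $|e_i|=|a|+|c|+p$, the key computation uses the Sweedler conventions of \eqref{eqs:define-products1}--\eqref{eqs:define-products2} together with the graded commutativity relation $e_i\cdot(f_{i(-2)}b)=(-1)^{(|a|+|c|+p)|b|}(f_{i(-2)}b)\cdot e_i$ to yield
\[(\sigma\otimes_H\mathrm{id})\big(\{c\ast a\}\cdot b\big)=-(-1)^{(|c|+p)(|a|+p)+(|a|+|c|+p)|b|}\,b\cdot\{a\ast c\},\]
and an analogous identity with the roles of $a$ and $b$ interchanged handles the second term.

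Finally, combining the four sign factors (the initial skew-symmetry flip, the graded left Leibniz split, the second round of skew-symmetry, and the graded commutativity swap), each exponent reduces modulo $2$. The coefficient of $a\cdot\{b\ast c\}$ collapses to $+1$ while that of $b\cdot\{a\ast c\}$ collapses to $(-1)^{(|a|+p)|b|}$, producing exactly the claimed identity \eqref{eq:graded-right-Leibniz-rule}. Structurally the argument is the direct graded analogue of Lemma \ref{lem:right-Leibniz-rule}; only the sign reconciliation is genuinely new.
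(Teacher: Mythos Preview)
Your approach is exactly what the paper intends: the paper gives no detailed proof of this lemma, only the remark ``similar to right Leibniz rule \eqref{eq:right-Leibniz-rule} in Lemma~\ref{lem:right-Leibniz-rule}'', and you have carried out precisely that analogy. Your observation that graded commutativity of $(A,\cdot)$ is needed (paralleling the commutativity implicitly used in Lemma~\ref{lem:right-Leibniz-rule}) is correct and worth flagging, since the lemma as stated does not list it as a hypothesis.

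One caution on the final paragraph. If you actually carry through the sign bookkeeping with the paper's graded left Leibniz rule \eqref{eq:graded-left-Leibniz-rule}, the coefficients collapse to $(-1)^{p|a|}$ in front of $a\cdot\{b\ast c\}$ and $(-1)^{|a||b|}$ in front of $b\cdot\{a\ast c\}$, not to $1$ and $(-1)^{(|a|+p)|b|}$ as you claim. These agree with the stated formula \eqref{eq:graded-right-Leibniz-rule} precisely when $p$ is even (in particular $p=0$, which is the only case where the paper later invokes this lemma, in Proposition~\ref{pro:tensor-psalg}). The discrepancy for odd $p$ is not a defect of your argument but a latent inconsistency in the paper's sign convention for \eqref{eq:graded-left-Leibniz-rule}: that rule is itself compatible with graded commutativity of $\cdot$ only when $p$ is even, as one checks by applying it to $\{a\ast(b\cdot c)\}=(-1)^{|b||c|}\{a\ast(c\cdot b)\}$. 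So your method is sound; just temper the assertion that the signs reproduce \eqref{eq:graded-right-Leibniz-rule} verbatim for arbitrary $p$.
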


\begin{defi}
Let $(A,\cdot,\{\ast\},d)$ be a DGP pseudoalgebra, and $(B,\cdot)$ is a subalgebra of $(A,\cdot)$.
\begin{enumerate}
  \item If $d(B)\subseteq B$ and $\{B\ast B\}\subseteq (H\otimes H)\otimes_HB$, then $(B,\cdot,\{\ast\},d)$ is called a DGP 
  {\bf pseudosubalgebra} of $(A,\cdot,\{\ast\},d)$, denoted by $B<A$.
  \item If $d(B)\subseteq B, A\cdot B\subseteq B, B\cdot A\subseteq B$ and $\{A\ast B\}, \{B\ast A\}$ are all contained in $(H\otimes H)\otimes_HB$, then $(B,\cdot,\{\ast\},d)$ is called a DGP {\bf pseudoideal} of $(A,\cdot,\{\ast\},d)$, denoted by $B\lhd A$.
\end{enumerate}

\end{defi}

\begin{defi}\label{defi:DGP-homo}
Let $(A,\cdot_A,\{\ast\}_A,d_A)$ and $(B,\cdot_B,\{\ast\}_B,d_B)$ be two DGP pseudoalgebras. An $H$-linear map $f:~A\rightarrow B$ is called a DGP pseudoalgebra {\bf homomorphism} if $f(a\cdot_Ab)=f(a)\cdot_Bf(b)$, $\big((\mathrm{id}\otimes\mathrm{id})\otimes_H f\big)\{a\ast b\}_A=\{f(a)\ast f(b)\}_B$ and $f\circ d_A=d_B\circ f$ for all homogeneous elements $a, b\in A$. Moreover, $f$ is called a DGP pseudoalgebra {\bf monomorphism} if $\mathrm{Ker}f=0$, a DGP pseudoalgebra {\bf epimorphism} if $\mathrm{Im}f=B$, and a DGP pseudoalgebra {\bf isomorphism} if it is a bijection (in this case, we write $A\cong B$ to denote the isomorphism between $A$ and $B$).
\end{defi}

%

\begin{pro}\label{pro:oppo-psalg}
Let $(A,\cdot,\{\ast\},d)$ be a DGP pseudoalgebra, then $(A^{op},\cdot_{op},\{\ast\}_{op},d_{op})$ is a DGP pseudoalgebra, where for all homogeneous elements $a,b \in A$,
\begin{eqnarray}
  \label{eq:asso-op}a\cdot_{op}b &:=& a\cdot b, \\
  \label{eq:diff-op}d_{op} &:=& d, \\
  \nonumber \{a\ast b\}_{op} &:=& -\{a\ast b\}. 
\end{eqnarray}

\end{pro}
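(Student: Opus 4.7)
The plan is to verify each defining axiom of a DGP pseudoalgebra for $(A^{op},\cdot_{op},\{\ast\}_{op},d_{op})$ by reducing it to the corresponding axiom already known to hold for $(A,\cdot,\{\ast\},d)$. Since $\cdot_{op}=\cdot$ and $d_{op}=d$, the differential graded algebra structure on $A^{op}$ is literally the same data as on $A$, so the axioms $d_{op}^2=0$, the graded Leibniz rule for $d_{op}$ with respect to $\cdot_{op}$, graded commutativity (if $A$ is graded commutative), and the $H$-differential property \eqref{eq:H-differential} transfer trivially. Thus the only substantive work is to check the DGL pseudoalgebra axioms for the modified pseudobracket $\{\ast\}_{op}=-\{\ast\}$ and the graded left Leibniz rule between $\cdot_{op}$ and $\{\ast\}_{op}$.

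Next I would verify the three DGL axioms one at a time. For skew-symmetry, I would simply multiply the skew-symmetry identity $\{a\ast b\}=-(-1)^{(|a|+p)(|b|+p)}(\sigma\otimes_H\mathrm{id})\{b\ast a\}$ by $-1$; this immediately gives the corresponding identity for $\{\ast\}_{op}$. For compatibility with the differential, I would apply $((\mathrm{id}\otimes\mathrm{id})\otimes_H d)$ to $\{a\ast b\}_{op}=-\{a\ast b\}$, pull the minus sign outside by linearity of $d$, and use the identity for $\{\ast\}$. For the Jacobi identity, the key observation is that because $\{\ast\}_{op}$ is obtained from $\{\ast\}$ by multiplication by the scalar $-1$, the extension rule \eqref{eq:Delta-rule} for composing pseudobrackets commutes with this scalar: iterating gives $\{a\ast\{b\ast c\}_{op}\}_{op}=(-1)^2\{a\ast\{b\ast c\}\}=\{a\ast\{b\ast c\}\}$, and likewise for the other two nested terms. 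Hence the Jacobi identity for $\{\ast\}_{op}$ is literally the Jacobi identity for $\{\ast\}$.

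Finally, for the graded left Leibniz rule \eqref{eq:graded-left-Leibniz-rule}, since $\cdot_{op}=\cdot$, the rule for $\{\ast\}_{op}$ reads
\begin{equation*}
-\{a\ast(b\cdot c)\}=-\{a\ast b\}\cdot c+(-1)^{(|c|+p)|b|}(-\{a\ast c\})\cdot b,
\end{equation*}
which is exactly $-1$ times the graded left Leibniz rule for $(A,\cdot,\{\ast\})$; the signs in the definitions \eqref{eqs:define-products1}--\eqref{eqs:define-products2} of the pseudoproducts with ordinary elements are unaffected by the negation because they are $\mathbf{k}$-linear in their bracket argument.

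The main subtlety, and the step I would check most carefully, is the Jacobi identity: one must make sure that the sign flip $\{\ast\}_{op}=-\{\ast\}$ interacts correctly with the convention \eqref{eq:Delta-rule} for extending the bracket to nested expressions and with the coefficients $(-1)^{(|a|+p)(|b|+p)}$ that appear in the graded Jacobi identity itself. Since rule \eqref{eq:Delta-rule} is $H$-bilinear (in particular $\mathbf{k}$-linear) in $\{a\ast b\}$, the two negations introduced by the outer and inner applications of $\{\ast\}_{op}$ cancel and the Koszul signs are preserved, so no new sign corrections appear. Once these verifications are assembled, the proposition follows.
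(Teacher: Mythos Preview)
Your proposal is correct and follows essentially the same approach as the paper's own proof: both verify each DGP axiom for $(A^{op},\cdot_{op},\{\ast\}_{op},d_{op})$ by reducing it to the corresponding axiom for $(A,\cdot,\{\ast\},d)$, with the only nontrivial point being that the two sign flips in the nested Jacobi terms cancel. The paper carries out exactly the computations you outline (skew-symmetry, Jacobi, $d$-compatibility, graded left Leibniz), so there is nothing to add.
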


\begin{proof}
By \eqref{eq:asso-op} and \eqref{eq:diff-op}, $(A^{op},\cdot_{op},d_{op})$ is a differential graded algebra and satisfies $H$-differential. 

Observe that:
\begin{equation*}
  \{a\ast b\}_{op} = -\{a\ast b\}=(-1)^{(|a|+p)(|b|+p)}(\sigma\otimes_H\mathrm{id})\{b\ast a\}=-(-1)^{(|a|+p)(|b|+p)}(\sigma\otimes_H\mathrm{id})\{b\ast a\}_{op},
\end{equation*}
\begin{eqnarray*}
  \{a\ast\{b\ast c\}_{op}\}_{op} &=& \{a\ast\{b\ast c\}\}
  =\{\{a\ast b\}\ast c\}+(-1)^{(|a|+p)(|b|+p)}\big((\sigma\otimes\mathrm{id})\otimes_H\mathrm{id}\big)\{b\ast\{a\ast c\}\} \\
  &=& \{\{a\ast b\}_{op}\ast c\}_{op}+(-1)^{(|a|+p)(|b|+p)}\big((\sigma\otimes\mathrm{id}\big)\otimes_H\mathrm{id})\{b\ast\{a\ast c\}_{op}\}_{op},
\end{eqnarray*}
and
\begin{eqnarray*}
  \big((\mathrm{id}\otimes\mathrm{id})\otimes_H d_{op}\big)\{a\ast b\}_{op} &=& -\big((\mathrm{id}\otimes\mathrm{id})\otimes_H d\big)\{a\ast b\}=-\{d(a)\ast b\}-(-1)^{|a|+p}\{a\ast d(b)\} \\
  &=& \{d_{op}(a)\ast b\}_{op}+(-1)^{|a|+p}\{a\ast d_{op}(b)\}_{op},
\end{eqnarray*}
hence $(A^{op},\{\cdot,\cdot\}_{op},d_{op})$ is a DGL pseudoalgebra.
Furthermore, we have
\begin{eqnarray*}
  \{a\ast (b\cdot_{op}c)\}_{op} &=& -\{a\ast (b\cdot c)\} 
  =-\{a\ast b\}\cdot c-(-1)^{(|c|+p)|b|}\{a\ast c\}\cdot b \\
  &=& \{a\ast b\}_{op}\cdot_{op}c+(-1)^{(|c|+p)|b|}\{a\ast c\}_{op}\cdot_{op}b.
\end{eqnarray*}
Thus, the conclusion holds.
\end{proof}

\begin{pro}\label{pro:tensor-psalg}
Let $(A,\cdot_A,\{\ast\}_A,d_A)$ and $(B,\cdot_B,\{\ast\}_B,d_B)$ be two graded commutative DGP pseudoalgebras with pseudobrackets $\{\ast\}_A$ and $\{\ast\}_B$ of degree $0$. For all homogeneous elements $a, a' \in A$ and $b, b' \in B$, suppose that $\{a\ast a'\}_A=\sum_i(u_i\otimes v_i)\otimes_H\alpha_i$ and $\{b\ast b'\}_B=\sum_j(u_j\otimes v_j)\otimes_H\beta_j$, set
\begin{eqnarray*}
  &&h(a\otimes b):=h_{(1)}a\otimes h_{(2)}b,\\
  &&(a\otimes b) \star (a'\otimes b'):=(-1)^{|a'||b|}(a\cdot_A a')\otimes(b\cdot_B b'), \\
  &&d(a\otimes b):=d_A(a)\otimes b+(-1)^{|a|}a\otimes d_B(b),\\
  &&\{(a\otimes b)\ast (a'\otimes b')\}:=(-1)^{|a'||b|}\big(\{a\ast a'\}_A\otimes (b\cdot_B b')+(a\cdot_A a')\otimes\{b\ast b'\}_B\big)\\
  &=&(-1)^{|a'||b|}\big((u_i\otimes v_i)\otimes_H(\alpha_i\otimes(b\cdot_B b'))+(u_j\otimes v_j)\otimes_H((a\cdot_A a')\otimes\beta_j)\big),
\end{eqnarray*}
satisfying the following compatible conditions:
\begin{eqnarray}
  \label{eq:tensor-com1}&&(1\otimes h\otimes1)(1\otimes\Delta)(1\otimes g_{(1)})\otimes_H
  \big((a\cdot_Aa')\otimes(b\cdot_B(g_{(-2)}b'))\big)\\
   \nonumber&=&(1\otimes g\otimes1)(\Delta\otimes1)(h_{(1)}\otimes1)\otimes_H\big(((h_{(-2)}a)\cdot_Aa')\otimes(b\cdot_Bb')\big), \\
   \label{eq:tensor-com2}&&(1\otimes h\otimes f)(1\otimes\Delta)(1\otimes g_{(1)})\otimes_H
   \big((a\cdot_Aa')\otimes (b\cdot_B(g_{(-2)}b'))\big)\\
   \nonumber&=&\big((\mathrm{id}\otimes\sigma)\otimes_H\mathrm{id}_{A\otimes B}\big)
   (1\otimes g\otimes h)(\Delta\otimes1)(f_{(1)}\otimes1)\otimes_H\big(((f_{(-2)}a)\cdot_Aa')\otimes(b\cdot_Bb')\big), \\
   \label{eq:tensor-com3}&&
   (f\otimes h\otimes1)(\Delta\otimes1)(g_{(1)}\otimes1)\otimes_H\big(((g_{(-2)}a)\cdot_Aa')\otimes(b\cdot_Bb')\big)\\
   \nonumber&=&\big((\sigma\otimes\mathrm{id})\otimes_H\mathrm{id}_{A\otimes B}\big)(h\otimes g\otimes1)(1\otimes\Delta)(1\otimes f_{(1)})\otimes_H\big((a\cdot_Aa')\otimes(b\cdot_B(f_{(-2)}b'))\big),\\
  \label{eq:tensor-com4}&&a\cdot_A(ha')\otimes b\cdot_Bb'=a\cdot_Aa'\otimes b\cdot_B(hb')=a\cdot_A(h_{(1)}a')\otimes b\cdot_B(h_{(2)}b'), 
\end{eqnarray}
for all $f, g, h\in H$, where the identity map of $A\otimes B$ is denoted by $\mathrm{id}_{A\otimes B}$, then $(A\otimes B,\star,\{\ast\},d)$ is a graded commutative DGP pseudoalgebra.
\end{pro}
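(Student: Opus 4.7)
The plan is to check one by one each axiom required for $(A\otimes B, \star, \{\ast\}, d)$ to be a graded commutative DGP pseudoalgebra, and to explain at each step exactly where the compatibility conditions \eqref{eq:tensor-com1}--\eqref{eq:tensor-com4} enter. I would begin with the purely associative/differential side: that $(A\otimes B, \star, d)$ is a graded commutative differential graded algebra is standard (the Koszul sign $(-1)^{|a'||b|}$ in $\star$ forces graded commutativity from that of $\cdot_A$ and $\cdot_B$, and $d^2=0$ together with the graded Leibniz rule with respect to $\star$ follow from the same properties of $d_A, d_B$). The $H$-differential property $h\bigl((a\otimes b)\star(a'\otimes b')\bigr)=(h_{(1)}(a\otimes b))\star(h_{(2)}(a'\otimes b'))$ reduces, after applying $\Delta$ to $h$ twice, to the $H$-differential rule for each factor plus the cocommutativity of $H$ to reorder $h_{(1)}, h_{(2)}, h_{(3)}, h_{(4)}$ past the Koszul sign factor.

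Next I would address the DGL pseudoalgebra axioms for $\{\ast\}$. The $H$-bilinearity is inherited from that of $\{\ast\}_A$ and $\{\ast\}_B$ together with the $H$-differential identity for $\cdot_A, \cdot_B$. Skew-symmetry follows because both $\{\ast\}_A,\{\ast\}_B$ are skew-symmetric of degree $0$ and $\cdot_A,\cdot_B$ are graded commutative: the sign $(-1)^{|a'||b|}$ in the definition of $\{(a\otimes b)\ast(a'\otimes b')\}$ is exactly what is needed to balance the Koszul sign coming from swapping the tensor factors. The differential compatibility $\bigl((\mathrm{id}\otimes\mathrm{id})\otimes_H d\bigr)\{(a\otimes b)\ast(a'\otimes b')\}=\{d(a\otimes b)\ast(a'\otimes b')\}+(-1)^{|a\otimes b|}\{(a\otimes b)\ast d(a'\otimes b')\}$ reduces, after expanding $d(a\otimes b)=d_A(a)\otimes b+(-1)^{|a|}a\otimes d_B(b)$, to the individual $d$-compatibility of $\{\ast\}_A$ and $\{\ast\}_B$ with the graded Leibniz rules for $d_A, d_B$ on $\cdot_A, \cdot_B$.

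The main obstacle will be the Jacobi identity. Expanding $\{(a\otimes b)\ast\{(a'\otimes b')\ast(a''\otimes b'')\}\}$ produces four families of terms, according to which of the two summands in $\{(a'\otimes b')\ast(a''\otimes b'')\}$ is taken and according to which summand of $\{\cdot\ast\cdot\}$ is used on the outside; the same happens for the other two terms in the Jacobi identity. The $A$-only and $B$-only groups of terms collapse by the Jacobi identities in $A$ and $B$ together with the graded Leibniz rules \eqref{eq:graded-left-Leibniz-rule}--\eqref{eq:graded-right-Leibniz-rule} applied in the opposite factor. The mixed groups, in which a bracket in $A$ and a bracket in $B$ both appear, must cancel in pairs; this is exactly where \eqref{eq:tensor-com1}--\eqref{eq:tensor-com3} are used to move the $H$-factors appearing from the two pseudobrackets past the internal products $\cdot_A$ and $\cdot_B$ so that the three Jacobi summands live in the same copy of $H^{\otimes 3}\otimes_H(A\otimes B)$ and can be compared. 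I anticipate that making the bookkeeping rigorous — in particular tracking the Koszul signs $(-1)^{|\cdot||\cdot|}$ while simultaneously rewriting the $H^{\otimes 3}$-tensors via $\Delta$ and $\sigma$ — will be the main technical burden.

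Finally, for the graded left Leibniz rule I would compute $\{(a\otimes b)\ast((a'\otimes b')\star(a''\otimes b''))\}$, expanding $\star$ with its Koszul sign and applying the definition of $\{\ast\}$. Using the graded left Leibniz rules for $\{\ast\}_A$ and $\{\ast\}_B$ in their own factors, the resulting expression separates into two summands matching $\{(a\otimes b)\ast(a'\otimes b')\}\star(a''\otimes b'')$ and $\pm\{(a\otimes b)\ast(a''\otimes b'')\}\star(a'\otimes b')$; here the compatibility condition \eqref{eq:tensor-com4} is exactly what lets us absorb the $H$-action coming from the Leibniz rule on one side of the tensor by shifting it to the other side, thereby aligning the result with the definition of $\star$ applied after the pseudobracket. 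Skew-symmetry of $\{\ast\}$, already established, then gives the graded right Leibniz rule for free, completing the proof.
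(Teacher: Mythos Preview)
Your plan is correct and matches the paper's proof essentially step for step: the paper also verifies the graded commutative DG algebra axioms for $(A\otimes B,\star,d)$ directly, checks skew-symmetry and the differential compatibility of $\{\ast\}$ from the corresponding properties in $A$ and $B$, and then handles the Jacobi identity by expanding all three terms and grouping them into a pure-$A$ and a pure-$B$ piece (each vanishing by the Jacobi identity in $A$ resp.\ $B$, together with associativity and graded commutativity of the product in the other factor) plus six mixed pieces $C_1,C_2,C_3,D_1,D_2,D_3$ that are killed one pair at a time using \eqref{eq:tensor-com1}--\eqref{eq:tensor-com3}; finally the graded left Leibniz rule is obtained exactly as you describe, with \eqref{eq:tensor-com4} used to realign the $H$-actions. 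One small sharpening: for the pure-$A$ and pure-$B$ blocks, the graded Leibniz rules in the opposite factor are used only to \emph{generate} the expanded terms; the vanishing of those blocks then needs just the Jacobi identity together with associativity and graded commutativity of $\cdot_B$ (resp.\ $\cdot_A$).
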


\begin{proof}
For all homogeneous elements $a\otimes b,a'\otimes b',a''\otimes b''\in A\otimes B$, we have
\begin{eqnarray*}
  (a'\otimes b') \star (a\otimes b) &=& (-1)^{|a||b'|}(a'\cdot_A a)\otimes(b'\cdot_B b)  \\
  &=& (-1)^{|a||b'|+|a||a'|+|b||b'|}(a\cdot_A a')\otimes(b\cdot_B b') \\
  &=& (-1)^{(|a|+|b|)(|a'|+|b'|)}(a\otimes b) \star (a'\otimes b'), 
\end{eqnarray*}
and
\begin{eqnarray*}
  &&d\big((a\otimes b) \star (a'\otimes b')\big) = (-1)^{|a'||b|}d\big((a\cdot_A a')\otimes(b\cdot_B b')\big)  \\
  &=& (-1)^{|a'||b|}\big(d_A(a\cdot_A a')\otimes(b\cdot_B b')+(-1)^{|a|+|a'|}(a\cdot_A a')\otimes d_B(b\cdot_B b')\big) \\
  &=& (-1)^{|a'||b|}(d_A(a)\cdot_A a')\otimes(b\cdot_B b')+(-1)^{|a'||b|+|a|}(a\cdot_A d_A(a'))\otimes(b\cdot_B b')\\
  &&+(-1)^{|a'||b|+|a|+|a'|}(a\cdot_A a')\otimes (d_B(b)\cdot_B b')+(-1)^{|a'||b|+|a|+|a'|+|b|}(a\cdot_A a')\otimes (b\cdot_B d_B(b'))\\
  &=&\big(d(a)\otimes b\big) \star (a'\otimes b')+(-1)^{|a|}\big(a\otimes d_B(b)\big) \star (a'\otimes b')\\
  &&+(-1)^{|a|+|b|}(a\otimes b) \star \big(d_A(a')\otimes b'\big)+(-1)^{|a|+|b|+|a'|}(a\otimes b) \star \big(a'\otimes d_B(b')\big)\\
  &=&d(a\otimes b) \star (a'\otimes b')+(-1)^{|a|+|b|}(a\otimes b) \star d(a'\otimes b'),
\end{eqnarray*}
thus, $(A\otimes B,\star,d)$ is a graded commutative differential graded algebra.

Furthermore, we have
\begin{eqnarray*}
  &&-(-1)^{(|a|+|b|)(|a'|+|b'|)}(\sigma\otimes_H \mathrm{id})\{(a'\otimes b') \ast (a\otimes b)\} \\
  &=& (-1)^{|a'||b|}\big((u_i\otimes v_i)\otimes_H(\alpha_i\otimes(b\cdot_B b'))+(u_j\otimes v_j)\otimes_H((a\cdot_A a')\otimes\beta_j)\big)\\
  &=& \{(a\otimes b) \ast (a'\otimes b')\},
\end{eqnarray*}
for all homogeneous elements $a,c,e\in A$ and $b,d,f\in B$, suppose that
\begin{eqnarray*}
  && \{a\ast c\}_A=(h_i\otimes l_i)\otimes_H a_i,
  ~\{c\ast e\}_A=(h_j\otimes l_j)\otimes_H a_j,
  ~\{a\ast e\}_A=(h_m\otimes l_m)\otimes_H a_m,\\
  && \{b\ast d\}_B=(f_i\otimes g_i)\otimes_H b_i,
  ~\{d\ast f\}_B=(f_j\otimes g_j)\otimes_H b_j,
  ~\{b\ast f\}_B=(f_m\otimes g_m)\otimes_H b_m,\\
  && \{a_i\ast e\}_A=(h_{ik}\otimes l_{ik})\otimes_H a_{ik},
  ~\{a\ast a_j\}_A=(h_{jk}\otimes l_{jk})\otimes_H a_{jk},
  ~\{c\ast a_m\}_A=(h_{mk}\otimes l_{mk})\otimes_H a_{mk},\\
  && \{b_i\ast f\}_B=(f_{ik}\otimes g_{ik})\otimes_H b_{ik},
  ~\{b\ast b_j\}_B=(f_{jk}\otimes g_{jk})\otimes_H b_{jk},
  ~\{d\ast b_m\}_B=(f_{mk}\otimes g_{mk})\otimes_H b_{mk},
\end{eqnarray*}
by \eqref{eq:left-Leibniz}, \eqref{eq:right-Leibniz}, \eqref{eq:graded-left-Leibniz-rule} and \eqref{eq:graded-right-Leibniz-rule}, we have
\begin{eqnarray*}
  && \{a\ast (c\cdot_A e)\}_A=\{a\ast c\}_A\cdot_A e+(-1)^{|e||c|}\{a\ast e\}_A\cdot_A c\\
  &=&(h_i\otimes l_{i(1)})\otimes_H a_i\cdot_A(l_{i(-2)}e)+(-1)^{|e||c|}(h_m\otimes l_{m(1)})\otimes_H a_m\cdot_A(l_{m(-2)}c),\\
  && \{(a\cdot_A c)\ast e\}_A=a\cdot_A\{c\ast e\}_A+(-1)^{|a||c|}c\cdot_A\{a\ast e\}_A\\
  &=&(h_{j(1)}\otimes l_j)\otimes_H(h_{j(-2)}a)\cdot_A a_j+(-1)^{|a||c|}(h_{m(1)}\otimes l_m)\otimes_H(h_{m(-2)}c)\cdot_A a_m,\\
  && \{c\ast (a\cdot_A e)\}_A=\{c\ast a\}_A\cdot_A e+(-1)^{|a||e|}\{c\ast e\}_A\cdot_A a\\
  &=&-(-1)^{|c||a|}(l_i\otimes h_{i(1)})\otimes_H a_i\cdot_A(h_{i(-2)}e)+(-1)^{|a||e|}(h_j\otimes l_{j(1)})\otimes_H a_j\cdot_A(l_{j(-2)}a).\\
\end{eqnarray*}
Similarly, we have
\begin{eqnarray*}
  && \{b\ast (d\cdot_B f)\}_B=\{b\ast d\}_B\cdot_B f+(-1)^{|d||f|}\{b\ast f\}_B\cdot_B d\\
  &=&(f_i\otimes g_{i(1)})\otimes_H b_i\cdot_B(g_{i(-2)}f)+(-1)^{|d||f|}(f_m\otimes g_{m(1)})\otimes_H b_m\cdot_B(g_{m(-2)}d),\\
  && \{(b\cdot_B d)\ast f\}_B=b\cdot_B\{d\ast f\}_B+(-1)^{|b||d|}d\cdot_B\{b\ast f\}_B\\
  &=&(f_{j(1)}\otimes g_j)\otimes_H(f_{j(-2)}b)\cdot_B b_j+(-1)^{|b||d|}(f_{m(1)}\otimes g_m)\otimes_H(f_{m(-2)}d)\cdot_B b_m,\\
  && \{d\ast (b\cdot_B f)\}_B=\{d\ast b\}_B\cdot_B f+(-1)^{|b||f|}\{d\ast f\}_B\cdot_B b\\
  &=&-(-1)^{|d||b|}(g_i\otimes f_{i(1)})\otimes_H b_i\cdot_B(f_{i(-2)}f)+(-1)^{|b||f|}(f_j\otimes g_{j(1)})\otimes_H b_j\cdot_B(g_{j(-2)}b),
\end{eqnarray*}
then we obtain
\begin{eqnarray*}
  &&\{(a\otimes b)\ast\{(c\otimes d)\ast(e\otimes f)\}\}=(-1)^{|d||e|}\{(a\otimes b)\ast\big((h_j\otimes l_j)\otimes_H(a_j\otimes (d\cdot _Bf))\big)\}\\
  &&+(-1)^{|d||e|}\{(a\otimes b)\ast\big((f_j\otimes g_j)\otimes_H((c\cdot _Ae)\otimes b_j)\big)\}\\
  &=&(-1)^{|d||e|+|b||c|+|b||e|)}
  \bigg((h_{jk}\otimes h_{j}l_{jk(1)}\otimes l_{j}l_{jk(2)})\otimes_H\big(a_{jk}\otimes(b\cdot_B(d\cdot_Bf))\big)\\
  &&+(f_{i}\otimes h_{j}g_{i(1)(1)}\otimes l_{j}g_{i(1)(2)})\otimes_H\big((a\cdot_A a_j)\otimes(b_i\cdot_B(g_{i(-2)}f))\big)\\
  &&+(-1)^{|f||d|}
  (f_{m}\otimes h_{j}g_{m(1)(1)}\otimes l_{j}g_{m(1)(2)})\otimes_H\big((a\cdot_A a_j)\otimes(b_m\cdot_B(g_{m(-2)}d))\big)\\
  &&+(h_{i}\otimes f_{j}l_{i(1)(1)}\otimes g_{j}l_{i(1)(2)})\otimes_H\big((a_i\cdot_A (l_{i(-2)}e))\otimes(b\cdot_Bb_j)\big)\\
  &&+(-1)^{|e||c|}
  (h_{m}\otimes f_{j}l_{m(1)(1)}\otimes g_{j}l_{m(1)(2)})\otimes_H\big((a_m\cdot_A (l_{m(-2)}c))\otimes(b\cdot_Bb_j)\big)\\
  &&+(f_{jk}\otimes f_{j}g_{jk(1)}\otimes g_{j}g_{jk(2)})\otimes_H\big((a\cdot_A(c\cdot_Ae))\otimes b_{jk}\big)\bigg),
\end{eqnarray*}

\begin{eqnarray*}
  &&\{\{(a\otimes b)\ast(c\otimes d)\}\ast(e\otimes f)\}=(-1)^{|b||c|}\{(h_i\otimes l_i)\otimes_H\big(a_i\otimes (b\cdot _Bd)\big)\ast(e\otimes f)\}\\
  &&+(-1)^{|b||c|}\{(f_i\otimes g_i)\otimes_H\big((a\cdot _Ac)\otimes b_i\big)\ast (e\otimes f)\}\\
  &=&(-1)^{|d||e|+|b||c|+|b||e|}\bigg(
  (h_{i}h_{ik(1)}\otimes l_{i}h_{ik(2)}\otimes l_{ik})\otimes_H\big(a_{ik}\otimes((b\cdot_Bd)\cdot_Bf)\big)\\
  &&+(h_{i}f_{j(1)(1)}\otimes l_{i}f_{j(1)(2)}\otimes g_{j})\otimes_H\big((a_i\cdot_A e)\otimes((f_{j(-2)}b)\cdot_Bb_j)\big)\\
  &&+(-1)^{|b||d|}
  (h_{i}f_{m(1)(1)}\otimes l_{i}f_{m(1)(2)}\otimes g_{m})\otimes_H\big((a_i\cdot_A e)\otimes((f_{m(-2)}d)\cdot_Bb_m)\big)\\
  &&+(f_{i}h_{j(1)(1)}\otimes g_{i}h_{j(1)(2)}\otimes l_{j})\otimes_H\big(((h_{j(-2)}a)\cdot_Aa_j)\otimes(b_i\cdot_Bf)\big)\\
  &&+(-1)^{|a||c|}
  (f_{i}h_{m(1)(1)}\otimes g_{i}h_{m(1)(2)}\otimes l_{m})\otimes_H\big(((h_{m(-2)}c)\cdot_Aa_m)\otimes(b_i\cdot_Bf)\big)\\
  &&+(f_{i}f_{ik(1)}\otimes g_{i}f_{ik(2)}\otimes g_{ik})\otimes_H\big(((a\cdot _Ac)\cdot_A e)\otimes b_{ik}\big)\bigg),
\end{eqnarray*}
and
\begin{eqnarray*}
  &&(-1)^{(|a|+|b|)(|c|+|d|)}\big((\sigma\otimes\mathrm{id})\otimes_H\mathrm{id}_{A\otimes B}\big)\{(c\otimes d)\ast\{(a\otimes b)\ast(e\otimes f)\}\}\\
  &=&(-1)^{|d||e|+|b||c|+|b||e|}\bigg((-1)^{|a||c|+|b||d|}
  (h_{m}l_{mk(1)}\otimes h_{mk}\otimes l_{m}l_{mk(2)})\otimes_H\big(a_{mk}\otimes(d\cdot_B(b\cdot_Bf))\big)\\
  &&-(-1)^{|a||c|}
  (h_{m}f_{i(1)(1)}\otimes g_{i}\otimes l_{m}f_{i(1)(2)})\otimes_H\big((c\cdot_A a_m)\otimes(b_i\cdot_B(f_{i(-2)}f))\big)\\
  &&+(-1)^{|a||c|+|b||d|+|b||f|}
  (h_{m}g_{j(1)(1)}\otimes f_{j}\otimes l_{m}g_{j(1)(2)})\otimes_H\big((c\cdot_A a_m)\otimes(b_j\cdot_B(g_{j(-2)}b))\big)\\
  &&-(-1)^{|b||d|}
  (f_{m}h_{i(1)(1)}\otimes l_{i}\otimes g_{m}h_{i(1)(2)})\otimes_H\big((a_i\cdot_A (h_{i(-2)}e))\otimes(d\cdot_Bb_m)\big)\\
  &&+(-1)^{|a||c|+|b||d|+|a||e|}
  (f_{m}l_{j(1)(1)}\otimes h_{j}\otimes g_{m}l_{j(1)(2)})\otimes_H\big((a_j\cdot_A (l_{j(-2)}a))\otimes(d\cdot_Bb_m)\big)\\
  &&+(-1)^{|a||c|+|b||d|}
  (f_{m}g_{mk(1)}\otimes f_{mk}\otimes g_{m}g_{mk(2)})\otimes_H\big((c\cdot_A(a\cdot_Ae))\otimes b_{mk}\big)\bigg).
\end{eqnarray*}
Hence, we obtain
\begin{eqnarray*}
  && \{(a\otimes b)\ast\{(c\otimes d)\ast(e\otimes f)\}\}-\{\{(a\otimes b)\ast(c\otimes d)\}\ast(e\otimes f)\}\\
  &&-(-1)^{(|a|+|b|)(|c|+|d|)}\big((\sigma\otimes\mathrm{id})\otimes_H\mathrm{id}_{A\otimes B}\big)\{(c\otimes d)\ast\{(a\otimes b)\ast(e\otimes f)\}\} \\
  &=& (-1)^{|d||e|+|b||c|+|b||e|}\big(A_1+B_1+\sum\limits_{i=1}^{3}(C_i+D_i)\big),
\end{eqnarray*}
where
\begin{eqnarray*}
  A_1&=&(h_{jk}\otimes h_{j}l_{jk(1)}\otimes l_{j}l_{jk(2)})\otimes_H\big(a_{jk}\otimes(b\cdot_B(d\cdot_Bf))\big) \\
  &&- (h_{i}h_{ik(1)}\otimes l_{i}h_{ik(2)}\otimes l_{ik})\otimes_H\big(a_{ik}\otimes((b\cdot_Bd)\cdot_Bf)\big)\\
  &&-(-1)^{|a||c|+|b||d|}
  (h_{m}l_{mk(1)}\otimes h_{mk}\otimes l_{m}l_{mk(2)})\otimes_H\big(a_{mk}\otimes(d\cdot_B(b\cdot_Bf))\big),\\
  B_1&=&(f_{jk}\otimes f_{j}g_{jk(1)}\otimes g_{j}g_{jk(2)})\otimes_H\big((a\cdot_A(c\cdot_Ae))\otimes b_{jk}\big)\\
  &&-(f_{i}f_{ik(1)}\otimes g_{i}f_{ik(2)}\otimes g_{ik})\otimes_H\big(((a\cdot _Ac)\cdot_A e)\otimes b_{ik}\big)\\
  &&-(-1)^{|a||c|+|b||d|}
  (f_{m}g_{mk(1)}\otimes f_{mk}\otimes g_{m}g_{mk(2)})\otimes_H\big((c\cdot_A(a\cdot_Ae))\otimes b_{mk}\big),\\
  C_1&=&(f_{i}\otimes h_{j}g_{i(1)(1)}\otimes l_{j}g_{i(1)(2)})\otimes_H\big((a\cdot_A a_j)\otimes(b_i\cdot_B(g_{i(-2)}f))\big)\\
  &&-(f_{i}h_{j(1)(1)}\otimes g_{i}h_{j(1)(2)}\otimes l_{j})\otimes_H\big(((h_{j(-2)}a)\cdot_Aa_j)\otimes(b_i\cdot_Bf)\big),\\
  C_2&=&(-1)^{|f||d|}\bigg(
  (f_{m}\otimes h_{j}g_{m(1)(1)}\otimes l_{j}g_{m(1)(2)})\otimes_H\big((a\cdot_A a_j)\otimes(b_m\cdot_B(g_{m(-2)}d))\big)\\
  &&-(f_{m}l_{j(1)(1)}\otimes h_{j}\otimes g_{m}l_{j(1)(2)})\otimes_H\big(((l_{j(-2)}a)\cdot_A a_j)\otimes(b_m\cdot_B d)\big)\bigg),\\
  C_3&=&(-1)^{|a||c|}\bigg(
  (h_{m}f_{i(1)(1)}\otimes g_{i}\otimes l_{m}f_{i(1)(2)})\otimes_H\big((c\cdot_A a_m)\otimes(b_i\cdot_B(f_{i(-2)}f))\big)\\
  &&-(f_{i}h_{m(1)(1)}\otimes g_{i}h_{m(1)(2)}\otimes l_{m})\otimes_H\big(((h_{m(-2)}c)\cdot_Aa_m)\otimes(b_i\cdot_Bf)\big)\bigg),\\
  D_1&=&(h_{i}\otimes f_{j}l_{i(1)(1)}\otimes g_{j}l_{i(1)(2)})\otimes_H\big((a_i\cdot_A (l_{i(-2)}e))\otimes(b\cdot_Bb_j)\big)\\
  &&-(h_{i}f_{j(1)(1)}\otimes l_{i}f_{j(1)(2)}\otimes g_{j})\otimes_H\big((a_i\cdot_A e)\otimes((f_{j(-2)}b)\cdot_Bb_j)\big),\\
  D_2&=&(-1)^{|e||c|}\bigg(
  (h_{m}\otimes f_{j}l_{m(1)(1)}\otimes g_{j}l_{m(1)(2)})\otimes_H\big((a_m\cdot_A (l_{m(-2)}c))\otimes(b\cdot_Bb_j)\big)\\
  &&-(h_{m}g_{j(1)(1)}\otimes f_{j}\otimes l_{m}g_{j(1)(2)})\otimes_H\big((a_m\cdot_A c)\otimes((g_{j(-2)}b)\cdot_B b_j)\big)\bigg),\\
  D_3&=&(-1)^{|b||d|}\bigg(
  (f_{m}h_{i(1)(1)}\otimes l_{i}\otimes g_{m}h_{i(1)(2)})\otimes_H\big((a_i\cdot_A (h_{i(-2)}e))\otimes(d\cdot_Bb_m)\big)\\
  &&-(h_{i}f_{m(1)(1)}\otimes l_{i}f_{m(1)(2)}\otimes g_{m})\otimes_H\big((a_i\cdot_A e)\otimes((f_{m(-2)}d)\cdot_Bb_m)\big)\bigg).
\end{eqnarray*}
Since $(A,\{\ast\}_A,d_A)$ and $(B,\{\ast\}_B,d_B)$ are two DGL pseudoalgebras, thus $A_1=B_1=0$.

By \eqref{eq:tensor-com1}, we have
\begin{eqnarray*}
  C_1&=&(f_i\otimes h_j\otimes l_j)(1\otimes\Delta)(1\otimes g_{i(1)})\otimes_H\big((a\cdot_A a_j)\otimes (b_i\cdot_B(g_{i(-2)}f))\big)\\
  &&-(f_i\otimes g_i\otimes l_j)(\Delta\otimes1)(h_{j(1)}\otimes1)\otimes_H\big(((h_{j(-2)}a)\cdot_Aa_j)\otimes(b_i\cdot_Bf)\big)=0,
\end{eqnarray*}
by \eqref{eq:tensor-com2}, 
\begin{eqnarray*}
  C_2&=&(-1)^{|f||d|}\bigg(
  (f_{m}\otimes h_{j}\otimes l_{j})(1\otimes\Delta)(1\otimes g_{m(1)})\otimes_H\big((a\cdot_A a_j)\otimes(b_m\cdot_B(g_{m(-2)}d))\big)\\
  &&-(f_{m}l_{j(1)(1)}\otimes h_{j}\otimes g_{m}l_{j(1)(2)})\otimes_H\big(((l_{j(-2)}a)\cdot_A a_j)\otimes(b_m\cdot_B d)\big)\bigg)\\
  &=&(-1)^{|f||d|}\bigg((\mathrm{id}\otimes\sigma)\otimes_H\mathrm{id}_{A\otimes B}\big)(f_{m}\otimes g_{m}\otimes h_{j})(\Delta\otimes1)(l_{j(1)}\otimes1)\otimes_H\big(((l_{j(-2)}a)\cdot_A a_j)\otimes(b_m\cdot_B d)\big)\\
  &&-(f_{m}l_{j(1)(1)}\otimes h_{j}\otimes g_{m}l_{j(1)(2)})\otimes_H\big(((l_{j(-2)}a)\cdot_A a_j)\otimes(b_m\cdot_B d)\big)\bigg)
  =0,
\end{eqnarray*}
by \eqref{eq:tensor-com3}, 
\begin{eqnarray*}
  C_3&=&-(-1)^{|a||c|}\bigg((f_{i}\otimes g_{i}\otimes l_{m})(\Delta\otimes1)(h_{m(1)}\otimes 1)\otimes_H
  \big(((h_{m(-2)}c)\cdot_Aa_m)\otimes(b_i\cdot_Bf)\big)\\
  &&-(h_{m}f_{i(1)(1)}\otimes g_{i}\otimes l_{m}f_{i(1)(2)})\otimes_H\big((c\cdot_A a_m)\otimes(b_i\cdot_B(f_{i(-2)}f))\big)\bigg)\\
  &=&-(-1)^{|a||c|}\bigg(\big((\sigma\otimes\mathrm{id})\otimes_H\mathrm{id}_{A\otimes B}\big)
  (g_{i}\otimes h_{m}\otimes l_{m})(1\otimes\Delta)(1\otimes f_{i(1)})\otimes_H\big((c\cdot_Aa_m)\otimes(b_i\cdot_B(f_{i(-2)}f))\big)\\
  &&-(h_{m}f_{i(1)(1)}\otimes g_{i}\otimes l_{m}f_{i(1)(2)})\otimes_H\big((c\cdot_A a_m)\otimes(b_i\cdot_B(f_{i(-2)}f))\big)\bigg)=0.
\end{eqnarray*}
Similarly, by \eqref{eq:tensor-com1} $\sim$ \eqref{eq:tensor-com3}, we have $D_1=D_2=D_3=0$.

Furthermore, we obtain
\begin{eqnarray*}
  &&\big((\mathrm{id}\otimes\mathrm{id})\otimes_H d\big)(\{(a\otimes b)\ast(a'\otimes b')\})\\
  &=&(-1)^{|b||a'|}\big((\mathrm{id}\otimes\mathrm{id})\otimes_H d\big)\big(\{a\ast a'\}_A\otimes(b\cdot_Bb')\big)+(-1)^{|b||a'|}\big((\mathrm{id}\otimes\mathrm{id})\otimes_H d\big)\big((a\cdot_Aa')\otimes\{b\ast b'\}_B\big)\\
  &=&(-1)^{|b||a'|}\big((\mathrm{id}\otimes\mathrm{id})\otimes_H d\big)\{a\ast a'\}_A\otimes(b\cdot_Bb')+(-1)^{|b||a'|+|a|+|a'|}\{a\ast a'\}_A\otimes d(b\cdot_B b')\\
  &&+(-1)^{|b||a'|}d(a\cdot_Aa')\otimes\{b\ast b'\}_B
  +(-1)^{|b||a'|+|a|+|a'|}(a\cdot_Aa')\otimes \big((\mathrm{id}\otimes\mathrm{id})\otimes_H d\big)\{b\ast b'\}_B\\
  &=&(-1)^{|b||a'|}\{d_A(a)\ast a'\}_A\otimes(b\cdot_Bb')+(-1)^{|b||a'|+|a|}\{a\ast d_A(a')\}_A\otimes(b\cdot_Bb')\\
  &&+(-1)^{|b||a'|+|a|+|a'|}\{a\ast a'\}_A\otimes\big(d_B(b)\cdot_B b'\big)+(-1)^{|b||a'|+|a|+|a'|+|b|}\{a\ast a'\}_A\otimes\big(b\cdot_B d_B(b')\big)\\
  &&+(-1)^{|b||a'|}\big(d_A(a)\cdot_Aa'\big)\otimes\{b\ast b'\}_B+(-1)^{|b||a'|+|a|}\big(a\cdot_Ad_A(a')\big)\otimes\{b\ast b'\}_B\\
  &&+(-1)^{|b||a'|+|a|+|a'|}(a\cdot_Aa')\otimes \{d_B(b)\ast b'\}_B+(-1)^{|b||a'|+|a|+|a'|+|b|}(a\cdot_Aa')\otimes \{b\ast d_B(b')\}_B\\
  &=&(-1)^{|b||a'|}\big(\{d_A(a)\ast a'\}_A\otimes(b\cdot_Bb')+(d_A(a)\cdot_Aa')\otimes\{b\ast b'\}_B\big)\\
  &&+(-1)^{|a|+|a'|+|a'||b|}\big(\{a\ast a'\}_A\otimes(d_B(b)\cdot_Bb')+(a\cdot_Aa')\otimes\{d_B(b)\ast b'\}_B\big)\\
  &&+(-1)^{|a|+|a'||b|}\big(\{a\ast d_A(a')\}_A\otimes(b\cdot_Bb')+(a\cdot_Ad_A(a'))\otimes\{b\ast b'\}_B\big)\\
  &&+(-1)^{|a|+|b|+|a'|+|a'||b|}\big(\{a\ast a'\}_A\otimes(b\cdot_Bd_B(b'))+(a\cdot_Aa')\otimes\{b\ast d_B(b')\}_B\big)\\
  &=&\{(d_A(a)\otimes b)\ast(a'\otimes b')\}+(-1)^{|a|}\{(a\otimes d_B(b))\ast(a'\otimes b')\}\\
  &&+(-1)^{|a|+|b|}\{(a\otimes b)\ast (d_A(a')\otimes b')\}+(-1)^{|a|+|b|+|a'|}\{(a\otimes b)\ast (a'\otimes d_B(b'))\}\\
  &=&\{d(a\otimes b)\ast(a'\otimes b')\}+(-1)^{|a|+|b|}\{(a\otimes b)\ast d(a'\otimes b')\},
\end{eqnarray*}
thus, $(A\otimes B,\{\ast\},d)$ is a DGL pseudoalgebra.

$H$-differential is true since
\begin{eqnarray*}
   h\big((a\otimes b)\star(a'\otimes b')\big)&=&(-1)^{|a'||b|}h\big((a\cdot_Aa')\otimes(b\cdot_Bb')\big) =(-1)^{|a'||b|}h_{(1)}(a\cdot_Aa')\otimes h_{(2)}(b\cdot_Bb') \\
   &=& (-1)^{|a'||b|}(h_{(1)}a)\cdot_A(h_{(2)}a')\otimes (h_{(3)}b)\cdot_B(h_{(4)}b') \\
   &=& \big((h_{(1)}a)\otimes(h_{(2)}b)\big)\star\big((h_{(3)}a')\otimes(h_{(4)}b')\big)\\
   &=& \big(h_{(1)}(a\otimes b)\big)\star\big(h_{(2)}(a'\otimes b')\big),
\end{eqnarray*}
graded left Leibniz rule \eqref{eq:graded-left-Leibniz-rule} follows from
\begin{eqnarray*}
   &&\{(a\otimes b)\ast\big((c\otimes d)\star(e\otimes f)\big)\}=(-1)^{|d||e|}\{(a\otimes b)\ast\big((c\cdot_A e)\otimes(d\cdot_B f)\big)\}\\
   &=&(-1)^{|d||e|+|b||c|+|b||e|}\bigg((h_i\otimes l_{i(1)})\otimes_H a_i\cdot_A(l_{i(-2)}e)\otimes b\cdot_B(d\cdot_Bf)\\
   &&+(-1)^{|e||c|}(h_m\otimes l_{m(1)})\otimes_H a_m\cdot_A(l_{m(-2)}c)\otimes b\cdot_B(d\cdot_Bf)\\
   &&+(f_i\otimes g_{i(1)})\otimes_Ha\cdot_A(c\cdot_Ae) \otimes b_i\cdot_B(g_{i(-2)}f)\\
   &&+(-1)^{|d||f|}(f_m\otimes g_{m(1)})\otimes_Ha\cdot_A(c\cdot_Ae)\otimes b_m\cdot_B(g_{m(-2)}d)\bigg),
\end{eqnarray*}   
and
\begin{eqnarray*}
   &&\{(a\otimes b)\ast(c\otimes d)\}\star(e\otimes f)+(-1)^{(|c|+|d|)(|e|+|f|)}\{(a\otimes b)\ast(e\otimes f)\}\star(c\otimes d)\\
   &=&(-1)^{|b||c|}\big((h_i\otimes l_i)\otimes_H a_i\otimes(b\cdot_Bd)+(f_i\otimes g_i)\otimes_H(a\cdot_Ac)\otimes b_i\big)\star(e\otimes f)\\
   &&+(-1)^{(|c|+|d|)(|e|+|f|)+|b||e|}\big((h_m\otimes l_m)\otimes_Ha_m\otimes(b\cdot_Bf)+(f_m\otimes g_m)\otimes_H(a\cdot_Ae)\otimes b_m\big)\star(c\otimes d)\\
   &=&(-1)^{|b||c|}\bigg((h_i\otimes l_{i(1)})\otimes_H\big(a_i\otimes(b\cdot_Bd)\star l_{i(-2)}(e\otimes f)\big)\\
   &&+(f_i\otimes g_{i(1)})\otimes_H\big((a\cdot_Ac)\otimes b_i\star g_{i(-2)}(e\otimes f)\big)\bigg)\\
   &&+(-1)^{(|c|+|d|)(|e|+|f|)+|b||e|}\bigg((h_m\otimes l_{m(1)})\otimes_H\big(a_m\otimes(b\cdot_Bf)\star l_{m(-2)}(c\otimes d)\big)\\
   &&+(f_m\otimes g_{m(1)})\otimes_H\big((a\cdot_Ae)\otimes b_m\star g_{m(-2)}(c\otimes d)\big)\bigg)\\
   &=&(-1)^{|b||c|}\bigg((h_i\otimes l_{i(1)})\otimes_H\big(a_i\otimes(b\cdot_Bd)\star (l_{i(-2)})_{(1)}e\otimes (l_{i(-2)})_{(2)}f\big)\\
   &&+(f_i\otimes g_{i(1)})\otimes_H\big((a\cdot_Ac)\otimes b_i\star (g_{i(-2)})_{(1)}e\otimes (g_{i(-2)})_{(2)}f\big)\bigg)\\
   &&+(-1)^{(|c|+|d|)(|e|+|f|)+|b||e|}\bigg((h_m\otimes l_{m(1)})\otimes_H\big(a_m\otimes(b\cdot_Bf)\star (l_{m(-2)})_{(1)}c\otimes (l_{m(-2)})_{(2)}d\big)\\
   &&+(f_m\otimes g_{m(1)})\otimes_H\big((a\cdot_Ae)\otimes b_m\star (g_{m(-2)})_{(1)}c\otimes (g_{m(-2)})_{(2)}d\big)\bigg)\\
   &=&(-1)^{|b||c|+(|b|+|d|)|e|}\bigg((h_i\otimes l_{i(1)})\otimes_H\big(a_i\cdot_A(l_{i(-2)(1)}e)\otimes (b\cdot_Bd)\cdot_Bl_{i(-2)(2)}f\big)\\
   &&+(f_i\otimes g_{i(1)})\otimes_H\big((a\cdot_Ac)\cdot_Ag_{i(-2)(1)}e\otimes b_i\cdot_B(g_{i(-2)(2)}f)\big)\\
   &&+(-1)^{|c||e|+|d||f|}(h_m\otimes l_{m(1)})\otimes_H\big(a_m\cdot_A(l_{m(-2)(1)}c)\otimes(b\cdot_Bf)\cdot_B l_{m(-2)(2)}d\big)\\
   &&+(-1)^{|c||e|+|d||f|}(f_m\otimes g_{m(1)})\otimes_H\big((a\cdot_Ae)\cdot_Ag_{m(-2)(1)}c\otimes b_m\cdot_B(g_{m(-2)(2)}d)\big)\bigg),
\end{eqnarray*}
by \eqref{eq:tensor-com4}, we have   
\begin{eqnarray*}
  && \{(a\otimes b)\ast\big((c\otimes d)\star(e\otimes f)\big)\} \\
  &=& \{(a\otimes b)\ast(c\otimes d)\}\star(e\otimes f)+(-1)^{(|c|+|d|)(|e|+|f|)}\{(a\otimes b)\ast(e\otimes f)\}\star(c\otimes d).
\end{eqnarray*}
Therefore, the proof is completed.
\end{proof}

%
%

\begin{pro}\label{pro:Poi-quotient-psalg}
Let $(A,\cdot,\{\ast\},d)$ be a DGP pseudoalgebra, and $B\lhd A$, for all homogeneous elements $a, b, c\in A$, if we define
\begin{eqnarray*}
  h(a+B) &:=& ha+B,\\
  (a+B) \cdot_{A/B} (b+B) &:=& a \cdot b+B,\\
  \big(\{a\ast b\}+B\big)\cdot_{A/B}(c+B) &:=& \{a\ast b\}\cdot c+B,\\
  \{(a+B) \ast (b+B)\}_{A/B} &:=& \{a \ast b\}+B, \\
  d_{A/B}(a+B) &:=& d(a)+B,\\
  \big((\mathrm{id}\otimes\mathrm{id})\otimes_H d_{A/B}\big)\big(\{a\ast b\}+B\big)&:=&\big((\mathrm{id}\otimes\mathrm{id})\otimes_H d\big)\{a \ast b\}+B,
\end{eqnarray*}
and we define 
\begin{equation}\label{eq:quotient}
  \big((f_1\otimes f_2\otimes\cdots\otimes f_n)\otimes_H a\big)+B=(f_1\otimes f_2\otimes\cdots\otimes f_n)\otimes_H(a+B),
\end{equation}
for $f_1, f_2,...,f_n\in H$,
then $(A/B,\cdot_{A/B},\{\ast\}_{A/B},d_{A/B})$ is a DGP pseudoalgebra.
\end{pro}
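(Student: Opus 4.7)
The plan is to handle this in two stages: first verify that every operation listed is well-defined on cosets, then observe that all the DGP pseudoalgebra axioms descend automatically via the canonical projection $\pi : A \to A/B$. Since the definitions are set up so that $\pi$ is simultaneously $H$-linear, multiplicative, bracket-preserving, and chain (i.e.\ commutes with $d$), once well-definedness is established each axiom in $A$ projects term-by-term to the corresponding axiom in $A/B$, using the convention \eqref{eq:quotient} to interpret pseudoproducts of cosets.

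For well-definedness, I would run through each operation and exhibit the required containment in $B$ from the pseudoideal hypotheses. Concretely: the $H$-action is well-defined because $B$ is an $H$-submodule, so $hB \subseteq B$. The associative product descends because if $a'-a, b'-b \in B$, then $a'\cdot b' - a\cdot b = (a'-a)\cdot b' + a\cdot(b'-b) \in B$ by $A\cdot B \subseteq B$ and $B\cdot A \subseteq B$. The differential descends because $d(B)\subseteq B$. For the pseudobracket, the key input is $\{A\ast B\}, \{B\ast A\} \subseteq (H\otimes H)\otimes_H B$: given $a'-a, b'-b \in B$, bilinearity gives
\[
\{a'\ast b'\} - \{a\ast b\} = \{(a'-a)\ast b'\} + \{a\ast (b'-b)\} \in (H\otimes H)\otimes_H B,
\]
so after applying the identification \eqref{eq:quotient} the class $\{a\ast b\} + B \in (H\otimes H)\otimes_H (A/B)$ is independent of the representatives. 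The same argument, together with \eqref{eq:left-Leibniz}, handles the product $\{a\ast b\}\cdot_{A/B} c$ and the compatibility with $d_{A/B}$ on pseudoelements.

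Once well-definedness is in hand, the axiom check is nearly automatic: skew-symmetry, the Jacobi identity, the $H$-differential condition \eqref{eq:H-differential}, the graded left/right Leibniz rules \eqref{eq:graded-left-Leibniz-rule}, \eqref{eq:graded-right-Leibniz-rule}, $d_{A/B}^2=0$, and the derivation property of $d_{A/B}$ with respect to both $\cdot_{A/B}$ and $\{\ast\}_{A/B}$ are all obtained by applying $\big(\mathrm{id}_{H^{\otimes n}}\otimes_H\pi\big)$ to the corresponding identity in $A$ and using that $\pi$ respects every operation. The main obstacle I anticipate is purely bookkeeping around the convention \eqref{eq:quotient}: one must check that the induced map $(H^{\otimes n}\otimes_H A)/(H^{\otimes n}\otimes_H B) \to H^{\otimes n}\otimes_H (A/B)$ is a well-defined isomorphism of left $H^{\otimes n}$-modules, so that equations phrased inside $(H\otimes H)\otimes_H A$ can legitimately be reduced modulo $B$ on the right-hand tensor factor without interfering with the $H$-balancing. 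Once that tensor-product lemma is recorded, no further difficulty remains.
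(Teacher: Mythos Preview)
Your proposal is correct and, if anything, more careful than the paper's own argument. The paper does not discuss well-definedness at all: it simply writes down each axiom on cosets and reduces it to the corresponding identity in $A$ (e.g.\ it checks $d_{A/B}$ is a derivation for $\cdot_{A/B}$, that $(A/B,\{\ast\}_{A/B})$ inherits the graded Lie $H$-pseudoalgebra structure ``readily'' via \eqref{eq:quotient}, then verifies the DGL compatibility, the graded left Leibniz rule, and the $H$-differential by direct computation). Your approach front-loads the well-definedness checks using the pseudoideal hypotheses and then invokes the fact that the canonical projection $\pi$ preserves all structures, so every axiom descends in one stroke; the paper instead unwinds each axiom separately but takes the coset representatives for granted. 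Both routes are standard and short; yours is the more conceptually organized of the two.

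One minor point: where you say the induced map $(H^{\otimes n}\otimes_H A)/(H^{\otimes n}\otimes_H B)\to H^{\otimes n}\otimes_H(A/B)$ is an isomorphism, you actually only need (and only get for free, by right exactness of $H^{\otimes n}\otimes_H -$) the surjection with kernel containing the image of $H^{\otimes n}\otimes_H B$. That is exactly what is required to make \eqref{eq:quotient} meaningful and to pass identities in $(H\otimes H)\otimes_H A$ down to $(H\otimes H)\otimes_H(A/B)$, so the argument goes through unchanged; just avoid claiming more than is needed.
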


\begin{proof}
For all homogeneous elements $a, b, c\in A$, we have 
\begin{eqnarray*}
  d_{A/B}\big((a+B) \cdot_{A/B} (b+B)\big) &=& d_{A/B}(a\cdot b+B)=d(a\cdot b)+B\\
  &=&d(a)\cdot b+(-1)^{|a|}a\cdot d(b)+B\\
  &=& d_{A/B}(a+B)\cdot_{A/B}(b+B)+(-1)^{|a|}(a+B)\cdot_{A/B}d_{A/B}(b+B).
\end{eqnarray*}
Then $(A/B,\cdot_{A/B},d_{A/B})$ is a differential graded algebra.

Since $(A,\{\ast\})$ is a graded Lie $H$-pseudoalgebra and by \eqref{eq:quotient}, $(A/B,\{\ast\}_{A/B})$ is readily shown to be a graded Lie $H$-pseudoalgebra. Furthermore, we have
\begin{eqnarray*}
  &&\big((\mathrm{id}\otimes\mathrm{id})\otimes_H d_{A/B}\big)\{(a+B) \ast (b+B)\}_{A/B} = \big((\mathrm{id}\otimes\mathrm{id})\otimes_H d_{A/B}\big)\big(\{a\ast b\}+B\big)\\
  &=&\big((\mathrm{id}\otimes\mathrm{id})\otimes_H d\big)\{a \ast b\}+B 
  = \{d(a)\ast b\}+B+(-1)^{|a|+p}\{a\ast d(b)\}+B \\
  &=& \{d_{A/B}(a+B)\ast (b+B)\}_{A/B}+(-1)^{|a|+p}\{(a+B)\ast d_{A/B}(b+B)\}_{A/B}.
\end{eqnarray*}
Then $(A/B,\{\ast\}_{A/B},d_{A/B})$ is a DGL pseudoalgebra.

On the other hand, since $(A,\cdot,\{\ast\})$ is a graded Poisson $H$-pseudoalgebra, we obtain
\begin{eqnarray*}
  &&\{(a+B)\ast \big((b+B)\cdot_{A/B}(c+B)\big)\}_{A/B} = \{(a+B)\ast (b\cdot c+B)\}_{A/B}\\
  &=&\{a\ast (b\cdot c)\}+B=\{a\ast b\}\cdot c+B+(-1)^{(|c|+p)|b|}\{a\ast c\}\cdot b+B \\
  &=&  \{(a+B)\ast (b+B)\}_{A/B}\cdot_{A/B}(c+B)+(-1)^{(|c|+p)|b|}\{(a+B)\ast (c+B)\}_{A/B}\cdot_{A/B}(b+B),
\end{eqnarray*}
and
\begin{eqnarray*}
  &&h\big((a+B) \cdot_{A/B} (b+B)\big) = h(a\cdot b+B)=h(a\cdot b)+B=(h_{(1)}a)\cdot(h_{(2)}b)+B\\
  &=& (h_{(1)}a+B)\cdot_{A/B}(h_{(2)}b+B)=\big(h_{(1)}(a+B)\big)\cdot_{A/B}\big(h_{(2)}(b+B)\big).
\end{eqnarray*}
Hence, the conclusion holds.
\end{proof}

Recall that the cohomology theory of associative $H$-pseudoalgebras is introduced in \cite[Section 3]{Wu-2013-8}. Let $A$ be a left $H$-module in the pseudotensor category $\mathcal{M}^{\ast}(H)$, for $n\geq 0$, let
\begin{equation*}
  M^{n}(A) = Hom_{H^{\otimes (n+1)}}(A^{\boxtimes (n+1)},H^{\otimes (n+1)}\otimes_H A),
\end{equation*}
$M^{-1}(A)=A$ and $M^{n}(A)=0$ for $n\leq-2$.

Suppose $(A,\ast)$ is an associative $H$-pseudoalgebra and $d_n:~M^{n}(A)\rightarrow M^{n+1}(A)$, then $n$-th cohomology of $(A,\ast)$ is defined by 
\begin{equation*}
  \mathcal{H}^{n}(A)=\mathrm{ker}d_{n}/\mathrm{im}d_{n+1},
\end{equation*}
and we denote $\mathcal{H}(A)=\oplus_{n\in\mathbb{Z}}\mathcal{H}^{n}(A)$.

\begin{pro}\label{pro:coho-psalg}
Let $(A,\cdot_A,\{\ast\}_A,d_A)$ be a DGP pseudoalgebra, we define a DGP pseudoalgebra structure $\big(\mathcal{H}(A),\star_{\mathcal{H}(A)},\{\ast\}_{\mathcal{H}(A)},d_{\mathcal{H}(A)}\big)$ on the cohomology $H$-pseudoalgebra $\mathcal{H}(A)$ of $(A,\cdot_A,\{\ast\}_A,d_A)$ by
\begin{eqnarray*}
  h(a+\mathrm{im}d_A) &:=& ha+\mathrm{im}d_A, \\
  (a+\mathrm{im}d_A) \star_{\mathcal{H}(A)} (b+\mathrm{im}d_A) &:=& a\cdot_A b+\mathrm{im}d_A, \\
  \big(\{a\ast b\}_{A}+\mathrm{im}d_{A}\big)\star_{\mathcal{H}(A)}(c+\mathrm{im}d_A) &:=& \{a\ast b\}_A\cdot_A c+\mathrm{im}d_A,\\
   \{(a+\mathrm{im}d_A)\ast (b+\mathrm{im}d_A)\}_{\mathcal{H}(A)}&:=&\{a\ast b\}_A+\mathrm{im}d_A,\\
  d_{\mathcal{H}(A)}(a+\mathrm{im}d_A) &:=& d_A(a)+\mathrm{im}d_A,\\
  \big((\mathrm{id}\otimes\mathrm{id})\otimes_Hd_{\mathcal{H}(A)}\big)\big(\{a\ast b\}_{\mathcal{H}(A)}+\mathrm{im}d_{A}\big)&:=&\big((\mathrm{id}\otimes\mathrm{id})\otimes_H d_A\big)\{a\ast b\}_A+\mathrm{im}d_A,
\end{eqnarray*}
for all homogeneous elements $a+\mathrm{im}d_A, b+\mathrm{im}d_A, c+\mathrm{im}d_A\in \mathcal{H}(A)$. Moreover, if $(B,\cdot_B,\{\ast\}_B,d_B)$ is another DGP pseudoalgebra such that $A\cong B$ as DGP pseudoalgebras, then $\mathcal{H}(A)\cong \mathcal{H}(B)$ as DGP pseudoalgebras.
\end{pro}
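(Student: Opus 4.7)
The plan is to reduce the construction of the DGP pseudoalgebra structure on $\mathcal{H}(A)$ to the quotient construction already established in Proposition \ref{pro:Poi-quotient-psalg}. Concretely, I will exhibit $\ker d_A$ as a DGP pseudosubalgebra of $A$ and $\mathrm{im}\, d_A$ as a DGP pseudoideal of $\ker d_A$, then apply the quotient proposition.

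First I would verify that $\ker d_A$ is a DGP pseudosubalgebra of $A$. If $a,b\in\ker d_A$ are homogeneous, then the graded Leibniz rule $d_A(a\cdot_A b)=d_A(a)\cdot_A b+(-1)^{|a|}a\cdot_A d_A(b)$ gives $d_A(a\cdot_A b)=0$, so $\ker d_A$ is closed under $\cdot_A$. Similarly, the compatibility $\bigl((\mathrm{id}\otimes\mathrm{id})\otimes_H d_A\bigr)\{a\ast b\}_A=\{d_A(a)\ast b\}_A+(-1)^{|a|+p}\{a\ast d_A(b)\}_A$ shows that $\{a\ast b\}_A\in (H\otimes H)\otimes_H\ker d_A$ whenever $a,b\in\ker d_A$. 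Closure under the left $H$-action follows because $d_A$ is $H$-linear. Clearly $d_A(\ker d_A)=0\subseteq\ker d_A$.

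Next I would show that $\mathrm{im}\, d_A\lhd\ker d_A$. For any homogeneous $a=d_A(a')\in\mathrm{im}\, d_A$ and $b\in\ker d_A$, the graded Leibniz rule gives
\[
 a\cdot_A b=d_A(a')\cdot_A b=d_A(a'\cdot_A b)-(-1)^{|a'|}a'\cdot_A d_A(b)=d_A(a'\cdot_A b)\in\mathrm{im}\, d_A,
\]
and symmetrically on the other side. For the pseudobracket, the differential compatibility gives
\[
 \{d_A(a')\ast b\}_A=\bigl((\mathrm{id}\otimes\mathrm{id})\otimes_H d_A\bigr)\{a'\ast b\}_A-(-1)^{|a'|+p}\{a'\ast d_A(b)\}_A=\bigl((\mathrm{id}\otimes\mathrm{id})\otimes_H d_A\bigr)\{a'\ast b\}_A,
\]
which lies in $(H\otimes H)\otimes_H\mathrm{im}\, d_A$; the case $\{b\ast a\}_A$ is handled analogously by skew-symmetry. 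Finally $d_A(\mathrm{im}\, d_A)=0\subseteq\mathrm{im}\, d_A$ since $d_A^2=0$, and $\mathrm{im}\, d_A$ is $H$-stable. Applying Proposition \ref{pro:Poi-quotient-psalg} to the pair $(\ker d_A,\mathrm{im}\, d_A)$ produces the DGP pseudoalgebra $\mathcal{H}(A)=\ker d_A/\mathrm{im}\, d_A$ with precisely the structure maps listed in the statement, the induced differential $d_{\mathcal{H}(A)}$ being identically zero as every representative already lies in $\ker d_A$.

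For the isomorphism statement, let $f\colon A\to B$ be a DGP pseudoalgebra isomorphism. Since $f\circ d_A=d_B\circ f$ and $f$ is bijective, $f(\ker d_A)=\ker d_B$ and $f(\mathrm{im}\, d_A)=\mathrm{im}\, d_B$, so $f$ descends to a well-defined $H$-linear bijection $\bar f\colon\mathcal{H}(A)\to\mathcal{H}(B)$, $\bar f(a+\mathrm{im}\, d_A):=f(a)+\mathrm{im}\, d_B$. Compatibility with $\star_{\mathcal{H}}$, $\{\ast\}_{\mathcal{H}}$, and $d_{\mathcal{H}}$ is then immediate from Definition \ref{defi:DGP-homo} applied to $f$, so $\bar f$ is a DGP pseudoalgebra isomorphism, giving $\mathcal{H}(A)\cong\mathcal{H}(B)$.

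The main obstacle is Step 2, the verification that $\mathrm{im}\, d_A$ is an ideal of $\ker d_A$ under the pseudobracket: one must unwind the identification in \eqref{eq:quotient} to see that $\bigl((\mathrm{id}\otimes\mathrm{id})\otimes_H d_A\bigr)\{a'\ast b\}_A$ genuinely represents the zero class in $(H\otimes H)\otimes_H\mathcal{H}(A)$, and track the signs produced by the degree-$p$ bracket. Once this is done, every DGP axiom on $\mathcal{H}(A)$ follows from the corresponding axiom on $A$ by passing to cosets, which is exactly the content of Proposition \ref{pro:Poi-quotient-psalg}.
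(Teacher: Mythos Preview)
Your proposal is correct and follows essentially the same route as the paper: both reduce the first assertion to Proposition~\ref{pro:Poi-quotient-psalg} and handle the isomorphism by pushing $f$ down to cosets. The paper simply invokes Proposition~\ref{pro:Poi-quotient-psalg} without spelling out that $\ker d_A$ is a DGP pseudosubalgebra and $\mathrm{im}\,d_A$ a DGP pseudoideal of it, whereas you verify these explicitly; for the isomorphism part the paper writes out the three homomorphism checks on $\widetilde{f}$ in full, which you summarize as ``immediate from Definition~\ref{defi:DGP-homo}''.
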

\begin{proof}
The DGP pseudoalgebra structure on the cohomology $H$-pseudoalgebra $\mathcal{H}(A)$ follows immediately from Proposition \ref{pro:Poi-quotient-psalg}.

Let $f:~A\rightarrow B$ be a DGP pseudoalgebra isomorphism. Define an $H$-linear map $\widetilde{f}:~\mathcal{H}(A)\rightarrow \mathcal{H}(B)$ by
\begin{equation*}
  \widetilde{f}(a+\mathrm{im}d_A)=f(a)+\mathrm{im}d_B, \quad\forall a+\mathrm{im}d_A \in \mathcal{H}(A).
\end{equation*}
For all homogeneous elements $a+\mathrm{im}d_A, a'+\mathrm{im}d_A\in \mathcal{H}(A)$, if $a+\mathrm{im}d_A=a'+\mathrm{im}d_A$, then $a-a'\in \mathrm{im}d_A$. By the definition of a DGP pseudoalgebra isomorphism, we have $f(a-a')\in \mathrm{im}d_B$ and $f(a)+\mathrm{im}d_B=f(a')+\mathrm{im}d_B$. Then $\widetilde{f}$ is well defined. 
By the DGP pseudoalgebra structure $\big(\mathcal{H}(A),\star_{\mathcal{H}(A)},\{\ast\}_{\mathcal{H}(A)},d_{\mathcal{H}(A)}\big)$ defined above and \eqref{eq:quotient}, we have
\begin{eqnarray*}
  \widetilde{f}\big((a+\mathrm{im}d_A)\star_{\mathcal{H}(A)}(a'+\mathrm{im}d_A)\big)
   &=&\widetilde{f}(a\cdot_A a'+\mathrm{im}d_A)=f(a\cdot_A a')+\mathrm{im}d_B\\
  &=&f(a)\cdot_Bf(a')+\mathrm{im}d_B=(f(a)+\mathrm{im}d_B)\star_{\mathcal{H}(B)}(f(a')+\mathrm{im}d_B) \\
  &=&\widetilde{f}(a+\mathrm{im}d_A)\star_{\mathcal{H}(B)}\widetilde{f}(a'+\mathrm{im}d_A),
\end{eqnarray*}
\begin{eqnarray*}
  \big((\mathrm{id}\otimes\mathrm{id})\otimes_H \widetilde{f}\big)\{(a+\mathrm{im}d_A)\ast(a'+\mathrm{im}d_A)\}_{\mathcal{H}(A)} &=& \big((\mathrm{id}\otimes\mathrm{id})\otimes_H \widetilde{f}\big)\big(\{a\ast a'\}_A+\mathrm{im}d_A \big)\\
  &=&\big((\mathrm{id}\otimes\mathrm{id})\otimes_H f\big)\{a\ast a'\}_A+\mathrm{im}d_B\\
  &=&\{(f(a))\ast (f(a'))\}_B+\mathrm{im}d_B\\
  &=&\{(f(a)+\mathrm{im}d_B)\ast(f(a')+\mathrm{im}d_B)\}_{\mathcal{H}(B)}\\
  &=&\{\widetilde{f}(a+\mathrm{im}d_A)\ast\widetilde{f}(a'+\mathrm{im}d_A)\}_{\mathcal{H}(B)},
\end{eqnarray*}
and
\begin{eqnarray*}
  && (\widetilde{f}\circ d_{\mathcal{H}(A)})(a+\mathrm{im}d_A)=\widetilde{f}(d_A(a)+\mathrm{im}d_A)=f(d_A(a))+\mathrm{im}d_B \\
  &&= d_B(f(a))+\mathrm{im}d_B=d_{\mathcal{H}(B)}(f(a)+\mathrm{im}d_B)=(d_{\mathcal{H}(B)}\circ\widetilde{f})(a+\mathrm{im}d_A).
\end{eqnarray*}
Then $\widetilde{f}$ is a DGP pseudoalgebra homomorphism. Since $f$ is a DGP pseudoalgebra isomorphism, we deduce that $\widetilde{f}$ is bijective. Therefore, $\mathcal{H}(A)\cong \mathcal{H}(B)$ as DGP pseudoalgebras.
Hence the conclusion holds.
\end{proof}

\subsection{Universal enveloping $H$-pseudoalgebras of DGP pseudoalgebras}\label{subsec:Uepa-DPpa} 

Let $(A,\cdot,\{\cdot,\cdot\},d)$ be a differential graded Poisson algebra with an identity element 1. Denote $M_A=\{M_a: a\in A\}$ and $H_A=\{H_a: a\in A\}$ as two copies of the graded vector space $A$ endowed with two linear isomorphisms $M : A \rightarrow M_A ~(a \mapsto M_a)$ and $H : A \rightarrow H_A ~(a \mapsto H_a)$. From \cite[Section 3]{Lv-2016-4}, the universal enveloping algebra $A^{ue}$ of $(A,\cdot,\{\cdot,\cdot\},d)$ can be explicitly generated by $M_A$ and $H_A$ subject to the following relations:
\begin{eqnarray*}
  M_{a\cdot b} &=& M_{a}M_{b}; \\
  H_{\{a,b\}} &=& H_{a}H_{b}-(-1)^{(|a|+p)(|b|+p)}H_{b}H_{a}; \\
  M_{\{a,b\}}&=& H_{a}M_{b}-(-1)^{(|a|+p)|b|}M_{b}H_{a}; \\
  H_{a\cdot b} &=& M_{a}H_{b}+(-)^{|a||b|}M_{b}H_{a}; \\
  M_1&=& 1,
\end{eqnarray*}
for all homogeneous elements $a, b\in A$.

The similar construction can be applied to DGP pseudoalgebras, which is defined by the similar relations as those for differential graded Poisson algebras.

\begin{defi}\label{def:Uni-enve-pseudoalg}
Let $(A,\cdot,\{\ast\},d)$ be a DGP pseudoalgebra, and
\begin{equation*}
  M_A=\{M_a: a \in A\},\quad H_A=\{H_a: a \in A\}
\end{equation*}
are two copies of the graded left $H$-module $A$ endowed with two $H$-linear isomorphisms $M:A\rightarrow M_A$ and $H:A\rightarrow H_A$ sending $a$ to $M_a$ and $H_a$ respectively. The {\bf universal enveloping $H$-pseudoalgebra} $(A^{ue},\ast)$ of $(A,\cdot,\{\ast\},d)$ is defined to be the quotient $H$-pseudoalgebra of the free $H$-pseudoalgebra generated by $M_A$ and $H_A$, subject to the following relations:
\begin{eqnarray}
  \label{eq:rela-M-pro}(1\otimes1)\otimes_H M_{a\cdot b} &=& M_{a}\ast M_{b}, \\
  \label{eq:rela-H-bra}\big((\mathrm{id}\otimes\mathrm{id})\otimes_HH\big)_{\{a\ast b\}} &=& H_{a}\ast H_{b}-(-1)^{(|a|+p)(|b|+p)}(\sigma\otimes_H\mathrm{id})H_{b}\ast H_{a}, \\
  \label{eq:rela-M-bra}\big((\mathrm{id}\otimes\mathrm{id})\otimes_HM\big)_{\{a\ast b\}}&=& H_{a}\ast M_{b}-(-1)^{(|a|+p)|b|}(\sigma\otimes_H\mathrm{id})M_{b}\ast H_{a}, \\
  \label{eq:rela-H-pro}(1\otimes1)\otimes_HH_{a\cdot b} &=& M_{a}\ast H_{b}+(-1)^{|a||b|}M_{b}\ast H_{a}, \\
  \label{eq:rela-M-unit}(1\otimes1)\otimes_H M_1&=& 1\ast 1,
\end{eqnarray}
for all homogeneous elements $a, b \in A$, where $\big((\mathrm{id}\otimes\mathrm{id})\otimes_HH\big)_{\{a\ast b\}}=\sum\limits_{i}(f_i\otimes g_i)\otimes_H H_{e_i}$ for $\{a\ast b\}=\sum\limits_{i}(f_i\otimes g_i)\otimes_H e_i$.
\end{defi}

By the above construction of $(A^{ue},\ast)$, we can obtain that $(A^{ue},\ast)$ is an associative $H$-pseudoalgebra with identity $M_1$. Next, we construct a new DG pseudoalgebra structure on $(A^{ue},\ast)$ from the DGP pseudoalgebra.
\begin{pro}\label{pro:uni-enve-pseudoalg}
Let $(A,\cdot,\{\ast\},d)$ be a DGP pseudoalgebra and $(A^{ue},\ast)$ be the universal enveloping $H$-pseudoalgebra of $(A,\cdot,\{\ast\},d)$. If $\mathfrak{D}:~A^{ue}\rightarrow A^{ue}$ is an $H$-linear map such that for all homogeneous elements $a\in A$,
\begin{eqnarray*}
  && |M_a|=|a|,\quad|H_a|=|a|+p,\quad\mathfrak{D}(M_a)=M_{d(a)},\quad\mathfrak{D}(H_a)=H_{d(a)}, \\
  && \big((\mathrm{id}\otimes\mathrm{id})\otimes_H\mathfrak{D}\big)\big((\mathrm{id}\otimes\mathrm{id})\otimes_HM\big)_{\{a\ast b\}} = \big((\mathrm{id}\otimes\mathrm{id})\otimes_HM\big)_{\big((\mathrm{id}\otimes\mathrm{id})\otimes_Hd\big)\{a\ast b\}},\\
  && \big((\mathrm{id}\otimes\mathrm{id})\otimes_H\mathfrak{D}\big)\big((\mathrm{id}\otimes\mathrm{id})\otimes_HH\big)_{\{a\ast b\}} = \big((\mathrm{id}\otimes\mathrm{id})\otimes_HH\big)_{\big((\mathrm{id}\otimes\mathrm{id})\otimes_Hd\big)\{a\ast b\}},
\end{eqnarray*}
\noindent then $\mathfrak{D}$ is a differential graded operator.
\end{pro}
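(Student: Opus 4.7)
The plan is to show that $\mathfrak{D}$ endows $(A^{ue},\ast)$ with the structure of a differential graded $H$-pseudoalgebra, which reduces to three assertions: (a) $\mathfrak{D}$ is well-defined on the quotient $A^{ue}$; (b) $\mathfrak{D}$ satisfies the graded Leibniz rule with respect to $\ast$; (c) $\mathfrak{D}^2=0$. Since $A^{ue}$ is defined as a quotient of the free $H$-pseudoalgebra $F$ generated by $M_A\sqcup H_A$ modulo the relations \eqref{eq:rela-M-pro}--\eqref{eq:rela-M-unit}, the natural strategy is first to define $\mathfrak{D}$ on $F$ by declaring it on the generators (as in the hypothesis) and extending it to all pseudoproducts by the graded Leibniz rule, then descend to $A^{ue}$.

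First I would define $\mathfrak{D}:F\to F$ on generators by $\mathfrak{D}(M_a)=M_{d(a)}$, $\mathfrak{D}(H_a)=H_{d(a)}$, declare $|M_a|=|a|$, $|H_a|=|a|+p$, and extend recursively to longer words by the graded Leibniz rule $\bigl((\mathrm{id}\otimes\mathrm{id})\otimes_H\mathfrak{D}\bigr)(x\ast y)=\mathfrak{D}(x)\ast y+(-1)^{|x|}x\ast \mathfrak{D}(y)$, interpreting tensor factors from $H\otimes H$ via the rules \eqref{eqs:define-products1}--\eqref{eq:Delta-rule}. By construction $\mathfrak{D}$ is an $H$-linear degree-$1$ derivation on $F$.

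Second, I would show that $\mathfrak{D}$ preserves each of the five defining ideals so that it descends to $A^{ue}$. Applying $\mathfrak{D}$ to \eqref{eq:rela-M-pro} and using $d(a\cdot b)=d(a)\cdot b+(-1)^{|a|}a\cdot d(b)$ gives $(1\otimes 1)\otimes_H M_{d(a)\cdot b}+(-1)^{|a|}(1\otimes 1)\otimes_H M_{a\cdot d(b)}$, which matches $\mathfrak{D}(M_a)\ast M_b+(-1)^{|a|}M_a\ast \mathfrak{D}(M_b)$ via \eqref{eq:rela-M-pro} itself. For \eqref{eq:rela-H-bra} and \eqref{eq:rela-M-bra}, apply the DGL pseudoalgebra axiom $\bigl((\mathrm{id}\otimes\mathrm{id})\otimes_H d\bigr)\{a\ast b\}=\{d(a)\ast b\}+(-1)^{|a|+p}\{a\ast d(b)\}$, and use the hypothesized identities expressing $\bigl((\mathrm{id}\otimes\mathrm{id})\otimes_H\mathfrak{D}\bigr)\bigl((\mathrm{id}\otimes\mathrm{id})\otimes_H H\bigr)_{\{a\ast b\}}$ and the analogous $M$-version; the right-hand sides then match the Leibniz expansions of $H_a\ast H_b$ and $H_a\ast M_b$, where one has to track the sign $(-1)^{(|a|+p)(|b|+p)}$ carefully. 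Relation \eqref{eq:rela-H-pro} is handled by the same mechanism, and \eqref{eq:rela-M-unit} is immediate because $d(1)=0$ forces $\mathfrak{D}(M_1)=0$ and Leibniz gives $\mathfrak{D}(1\ast 1)=0$.

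Third, I would prove $\mathfrak{D}^2=0$. On single generators this is direct from $d^2=0$. For a general product $x\ast y$, one applies Leibniz twice and observes the cross terms
$(-1)^{|x|+1}\mathfrak{D}(x)\ast \mathfrak{D}(y)+(-1)^{|x|}\mathfrak{D}(x)\ast \mathfrak{D}(y)$
cancel, yielding $\mathfrak{D}^2(x\ast y)=\mathfrak{D}^2(x)\ast y+x\ast \mathfrak{D}^2(y)$; induction on word length concludes. The main obstacle will be step two for the pseudobracket-based relations \eqref{eq:rela-H-bra}, \eqref{eq:rela-M-bra}, \eqref{eq:rela-H-pro}: the expressions $\bigl((\mathrm{id}\otimes\mathrm{id})\otimes_H M\bigr)_{\{a\ast b\}}$ and $\bigl((\mathrm{id}\otimes\mathrm{id})\otimes_H H\bigr)_{\{a\ast b\}}$ carry nontrivial $H^{\otimes 2}$-factors, so confirming that Leibniz expansion of the right-hand sides of those relations matches $\mathfrak{D}$ applied to the left-hand sides requires matching the coproduct manipulations from \eqref{eqs:define-products1}--\eqref{eq:Delta-rule} against the sign shifts introduced by the bracket having degree $p$ and the odd degree of $\mathfrak{D}$. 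Once the generator-level verification is in place, the rest is formal induction.
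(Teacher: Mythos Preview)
Your proposal is correct and follows essentially the same approach as the paper: both reduce the problem to showing that $\mathfrak{D}$, defined on generators and extended via the graded Leibniz rule, preserves each of the relations \eqref{eq:rela-M-pro}--\eqref{eq:rela-M-unit}, using exactly the ingredients you identify (the differential on $(A,\cdot)$ for \eqref{eq:rela-M-pro} and \eqref{eq:rela-H-pro}, the DGL axiom for \eqref{eq:rela-H-bra} and \eqref{eq:rela-M-bra}, and $d(1)=0$ for \eqref{eq:rela-M-unit}). The only difference is expository: you are more explicit about framing the construction on the free $H$-pseudoalgebra and about verifying $\mathfrak{D}^2=0$ by induction on word length, whereas the paper simply states ``it suffices to prove that $\mathfrak{D}$ preserves all the relations'' and carries out those checks directly.
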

\begin{proof}
It suffices to prove that $\mathfrak{D}$ preserves all the relations \eqref{eq:rela-M-pro}$\sim$\eqref{eq:rela-M-unit}. It is obvious that $\big((\mathrm{id}\otimes\mathrm{id})\otimes_H\mathfrak{D}\big)\big((1\otimes1)\otimes_H M_1\big)
=(1\otimes1)\otimes_H\mathfrak{D}(M_1)=\big((\mathrm{id}\otimes\mathrm{id})\otimes_H\mathfrak{D}\big)(1\ast 1)$.
For all homogeneous elements $a,b\in A$, by \eqref{eq:rela-M-pro} we have
\begin{eqnarray*}
  &&\big((\mathrm{id}\otimes\mathrm{id})\otimes_H\mathfrak{D}\big)\big((1\otimes1)\otimes_H M_{a\cdot b}\big) = (1\otimes1)\otimes_HM_{d(a\cdot b)}\\
  &=&(1\otimes1)\otimes_H(M_{d(a)\cdot b}+(-1)^{|a|}M_{a\cdot d(b)}) 
  = M_{d(a)}\ast M_b+(-1)^{|M_a|}M_a\ast M_{d(b)} \\
  &=& \mathfrak{D}(M_a)\ast M_b+(-1)^{|M_a|}M_a\ast\mathfrak{D}(M_b)
  = \big((\mathrm{id}\otimes\mathrm{id})\otimes_H\mathfrak{D}\big)(M_a\ast M_b).
\end{eqnarray*}
By \eqref{eq:rela-H-bra} we obtain
\begin{eqnarray*}
  &&\big((\mathrm{id}\otimes\mathrm{id})\otimes_H\mathfrak{D}\big)\big((\mathrm{id}\otimes\mathrm{id})\otimes_HH\big)_{\{a\ast b\}} = \big((\mathrm{id}\otimes\mathrm{id})\otimes_HH\big)_{\big((\mathrm{id}\otimes\mathrm{id})\otimes_Hd\big)\{a\ast b\}}\\
  &=&\big((\mathrm{id}\otimes\mathrm{id})\otimes_HH\big)_{\{d(a)\ast b\}}+(-1)^{|a|+p}\big((\mathrm{id}\otimes\mathrm{id})\otimes_HH\big)_{\{a\ast d(b)\}} \\
  &\overset{\eqref{eq:rela-H-bra}}{=}& H_{d(a)}\ast H_b-(-1)^{(|d(a)|+p)(|b|+p)}(\sigma\otimes_H\mathrm{id})H_b\ast H_{d(a)}\\
  &+&(-1)^{|a|+p}\big(H_a\ast H_{d(b)}-(-1)^{(|a|+p)(|d(b)|+p)}(\sigma\otimes_H\mathrm{id})H_{d(b)}\ast H_a\big)\\
  &=& \mathfrak{D}(H_a)\ast H_b+(-1)^{|H_a|}H_a\ast\mathfrak{D}(H_b)\\
  &-&(-1)^{(|a|+p)(|b|+p)}(\sigma\otimes_H\mathrm{id})\big(\mathfrak{D}(H_b)\ast H_a+(-1)^{|H_b|}H_b\ast\mathfrak{D}(H_a)\big) \\
  &=& \big((\mathrm{id}\otimes\mathrm{id})\otimes_H\mathfrak{D}\big)(H_{a}\ast H_{b}-(-1)^{(|a|+p)(|b|+p)}(\sigma\otimes_H\mathrm{id})H_{b}\ast H_{a}),
\end{eqnarray*}
where the above notation $\overset{\eqref{eq:rela-H-bra}}{=}$ indicates that the equation \eqref{eq:rela-H-bra} is used to establish the equality. Furthermore, by \eqref{eq:rela-M-bra} we have
\begin{eqnarray*}
  &&\big((\mathrm{id}\otimes\mathrm{id})\otimes_H\mathfrak{D}\big)\big((\mathrm{id}\otimes\mathrm{id})\otimes_HM\big)_{\{a\ast b\}} = \big((\mathrm{id}\otimes\mathrm{id})\otimes_HM\big)_{\big((\mathrm{id}\otimes\mathrm{id})\otimes_Hd\big)\{a\ast b\}}\\
  &=&\big((\mathrm{id}\otimes\mathrm{id})\otimes_HM\big)_{\{d(a)\ast b\}}+(-1)^{|a|+p}\big((\mathrm{id}\otimes\mathrm{id})\otimes_HM\big)_{\{a\ast d(b)\}} \\
  &\overset{\eqref{eq:rela-M-bra}}{=}& \big(H_{d(a)}\ast M_b-(-1)^{(|d(a)|+p)|b|}(\sigma\otimes_H\mathrm{id})M_b\ast H_{d(a)}\big)\\
  &+&(-1)^{|a|+p}\big(H_a\ast M_{d(b)}-(-1)^{(|a|+p)|d(b)|}(\sigma\otimes_H\mathrm{id})M_{d(b)}\ast H_a\big) \\
  &=& \big(\mathfrak{D}(H_a)\ast M_b+(-1)^{|H_a|}H_a\ast \mathfrak{D}(M_b)\big)\\
  &-&(-1)^{(|a|+p)|b|}(\sigma\otimes_H\mathrm{id})\big(\mathfrak{D}(M_b)\ast H_a+(-1)^{|M_b|}M_b\ast \mathfrak{D}(H_a)\big) \\
  &=& \big((\mathrm{id}\otimes\mathrm{id})\otimes_H\mathfrak{D}\big)(H_{a}\ast M_{b}-(-1)^{(|a|+p)|b|}(\sigma\otimes_H\mathrm{id})M_{b}\ast H_{a}).
\end{eqnarray*}

By \eqref{eq:rela-H-pro} we obtain
\begin{eqnarray*}
  &&\big((\mathrm{id}\otimes\mathrm{id})\otimes_H\mathfrak{D}\big)\big((1\otimes1)\otimes_HH_{a\cdot b}\big) = (1\otimes1)\otimes_HH_{d(a\cdot b)}
  =(1\otimes1)\otimes_H\big(H_{d(a)\cdot b}+(-1)^{|a|}H_{a\cdot d(b)}\big) \\
  &\overset{\eqref{eq:rela-H-pro}}{=}& \big(M_{d(a)}\ast H_b+(-1)^{|d(a)||b|}M_b\ast H_{d(a)}\big)
  +(-1)^{|a|}\big(M_a\ast H_{d(b)}+(-1)^{|d(b)||a|}M_{d(b)}\ast H_a\big) \\
  &=& \big(\mathfrak{D}(M_a)\ast H_b+(-1)^{|M_a|}M_a\ast\mathfrak{D}(H_b)\big)
  +(-1)^{|a||b|}\big(\mathfrak{D}(M_b)\ast H_a+(-1)^{|M_b|}M_b\ast\mathfrak{D}(H_a)\big)\\
  &=& \big((\mathrm{id}\otimes\mathrm{id})\otimes_H\mathfrak{D}\big)(M_{a}\ast H_{b}+(-1)^{|a||b|}M_{b}\ast H_{a}).
\end{eqnarray*}
Hence, the conclusion holds.

\end{proof}

\begin{defi}\label{defi:P-triple}
Let $(A,\cdot_A,\{\ast\}_A,d_A)$ be a DGP pseudoalgebra and $(B,\ast_B,d_B)$ be a DG pseudoalgebra,
for all homogeneous elements $a,b \in A$, if
\begin{itemize}
  \item [($\mathcal{P}$1)]~$f:~A\rightarrow B$ is an $H$-linear map, satisfying $(1\otimes1)\otimes_Hf(a\cdot_Ab)=f(a)\ast_Bf(b)$;
  \item [($\mathcal{P}$2)]~$g:(A,\{\ast\}_A,d_A)\rightarrow (B,\{\ast\}_B,d_B)$ is a DGL pseudoalgebra homomorphism, i.e., $\big((\mathrm{id}\otimes\mathrm{id})\otimes_Hg\big)\{a\ast b\}_A=\{g(a)\ast g(b)\}_B$, where $\{\ast\}_B$ is defined as the commutator: $\{x\ast y\}_B=x\ast_B y-(-1)^{(|x|+p)(|y|+p)}(\sigma\otimes_H \mathrm{id})(y\ast_B x)$, for all homogeneous elements $x, y \in B$;
  \item [($\mathcal{P}$3)]~$\big((\mathrm{id}\otimes\mathrm{id})\otimes_Hf\big)\{a\ast b\}_A=g(a)\ast_Bf(b)-(-1)^{(|a|+p)|b|}(\sigma\otimes_H \mathrm{id})\big(f(b)\ast_Bg(a)\big)$;
  \item [($\mathcal{P}$4)]~$(1\otimes1)\otimes_Hg(a\cdot_Ab)=f(a)\ast_Bg(b)+(-1)^{|a||b|}f(b)\ast_Bg(a)$,
\end{itemize}
then the triple $\big((B,\ast_B,d_B),f,g\big)$ is called a {\bf $\mathcal{P}$-triple} of $(A,\cdot_A,\{\ast\}_A,d_A)$.

\end{defi}

\begin{rmk}\label{rmk:ex-triple}
Note that in Definition \ref{def:Uni-enve-pseudoalg}, $M_A$ and $H_A$ induce two $H$-linear maps $M$ and $H$ from $(A,\cdot,\{\ast\},d)$ to $(A^{ue},\ast)$, and the above properties $(\mathcal{P}1)\sim (\mathcal{P}4)$ are respectively equivalent to \eqref{eq:rela-M-pro} $\sim$ \eqref{eq:rela-H-pro}. Hence, the triple $\big((A^{ue},\ast,\mathfrak{D}),M,H\big)$ is a $\mathcal{P}$-triple of $(A,\cdot_A,\{\ast\}_A,d_A)$.
\end{rmk}

\begin{thm}\label{thm:bi-commute}
Let $\big((B,\ast_B,d_B),f,g\big)$ be a $\mathcal{P}$-triple of a DGP pseudoalgebra $(A,\cdot_A,\{\ast\}_A,d_A)$. Then there exists a unique DG pseudoalgebra homomorphism $\phi:~(A^{ue},\ast,\mathfrak{D})\rightarrow (B,\ast_B,d_B)$ satisfying $f=\phi\circ M$ and $g=\phi\circ H$, i.e., the following diagram bi-commutes:
\[
 \xymatrix{
 (A,\cdot_A,\{\ast\}_A,d_A) \ar[d]_{H} \ar[d]^{M} \ar[r]_{\quad g} \ar[r]^{\quad f} & (B,\ast_B,d_B)     \\
 (A^{ue},\ast,\mathfrak{D}) \ar@{.>}[ur]|-{\exists!\phi}                               }
\]

\end{thm}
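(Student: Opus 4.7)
The plan is to construct $\phi$ via the universal property of $A^{ue}$. First I would specify $\phi$ on the generating set by $\phi(M_a) := f(a)$ and $\phi(H_a) := g(a)$ for all homogeneous $a \in A$. Since $|M_a| = |a|$ and $|H_a| = |a|+p$, and both $f, g$ are $H$-linear of the correct degrees, this gives a well-defined $H$-linear map of graded left $H$-modules on the generators. I would then extend $\phi$ uniquely to an associative $H$-pseudoalgebra morphism from the free associative $H$-pseudoalgebra $\mathcal{F}$ on $M_A \cup H_A$ into $(B, \ast_B)$ using the universal property of the free object.

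The core step is showing that $\phi$ descends to $A^{ue}$, i.e.\ that it annihilates the two-sided pseudoideal of $\mathcal{F}$ generated by the five defining relations \eqref{eq:rela-M-pro}--\eqref{eq:rela-M-unit}. Each relation matches one $\mathcal{P}$-triple axiom: relation \eqref{eq:rela-M-pro} is precisely $(\mathcal{P}1)$; relation \eqref{eq:rela-H-bra}, after applying $\phi$, becomes $\big((\mathrm{id}\otimes\mathrm{id})\otimes_H g\big)\{a\ast b\}_A = g(a) \ast_B g(b) - (-1)^{(|a|+p)(|b|+p)}(\sigma\otimes_H\mathrm{id})\big(g(b) \ast_B g(a)\big)$, which is exactly $(\mathcal{P}2)$ once $\{\ast\}_B$ is expanded as the graded commutator of $\ast_B$; relation \eqref{eq:rela-M-bra} becomes $(\mathcal{P}3)$; relation \eqref{eq:rela-H-pro} becomes $(\mathcal{P}4)$; and \eqref{eq:rela-M-unit} follows from $f(1) = 1_B$. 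Hence $\phi$ descends to an $H$-pseudoalgebra homomorphism $\phi: A^{ue} \to B$.

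Next I would verify the differential compatibility $d_B \circ \phi = \phi \circ \mathfrak{D}$, which reduces to checking it on the generators $M_a$ and $H_a$ since both sides are graded derivations with respect to $\ast$ and $\ast_B$. On $H_a$ one has $\phi(\mathfrak{D}(H_a)) = \phi(H_{d_A(a)}) = g(d_A(a)) = d_B(g(a)) = d_B(\phi(H_a))$ directly from $(\mathcal{P}2)$. For $M_a$ one needs $f \circ d_A = d_B \circ f$, which is not listed explicitly in Definition \ref{defi:P-triple}; the cleanest route is to apply $d_B$ to both sides of $(\mathcal{P}4)$, use the graded Leibniz rule of $d_B$ with respect to $\ast_B$, combine with $g \circ d_A = d_B \circ g$ and the graded Leibniz rule of $d_A$ with respect to $\cdot_A$, and then match the resulting terms against $(\mathcal{P}4)$ applied to $d_A(a) \cdot_A b$ and $a \cdot_A d_A(b)$ to isolate $f(d_A(a)) - d_B(f(a)) = 0$. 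Uniqueness is then automatic: since $M_A \cup H_A$ generates $A^{ue}$ as an $H$-pseudoalgebra and any competitor $\phi'$ must satisfy $\phi'(M_a) = f(a)$, $\phi'(H_a) = g(a)$, the maps $\phi$ and $\phi'$ agree on a generating set and hence everywhere.

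The principal obstacle is the bookkeeping in the descent step for relations \eqref{eq:rela-H-bra} and \eqref{eq:rela-M-bra}: one must expand $\{a\ast b\}_A = \sum_i (h_i \otimes l_i) \otimes_H c_i$, track how $\phi$ interacts with the $H^{\otimes 2} \otimes_H -$ structure under the rule \eqref{eq:Delta-rule}, and confirm that the resulting identities in $(H\otimes H)\otimes_H B$ are exactly the graded-commutator relations supplied by $(\mathcal{P}2)$ and $(\mathcal{P}3)$. A secondary difficulty is the need to derive $f \circ d_A = d_B \circ f$ from the stated axioms, since without this intermediate identity the differential-compatibility step for $\phi$ on $M_a$ cannot be closed.
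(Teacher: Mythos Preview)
Your proposal is correct and follows essentially the same approach as the paper: define $\phi$ on generators by $\phi(M_a)=f(a)$, $\phi(H_a)=g(a)$, verify that the five defining relations \eqref{eq:rela-M-pro}--\eqref{eq:rela-M-unit} map to zero via the corresponding $\mathcal{P}$-triple axioms, and then check differential compatibility on generators.

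You are in fact more careful than the paper on one point. For the compatibility $\phi\circ\mathfrak{D}=d_B\circ\phi$ on $M_a$, the paper simply writes $f(d_A(a))=d_B(f(a))$ without comment, even though Definition~\ref{defi:P-triple} only requires $g$ (not $f$) to commute with the differentials. Your observation that this identity must be \emph{derived} from the other axioms, and your suggested route via $(\mathcal{P}4)$ together with $g\circ d_A=d_B\circ g$ and the graded Leibniz rules, fills a gap the paper leaves implicit. A similar remark applies to $f(1)=1_B$, which you and the paper both use for relation~\eqref{eq:rela-M-unit} but which is not listed among $(\mathcal{P}1)$--$(\mathcal{P}4)$.
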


\begin{proof}
By replacing the differential graded Poisson algebra in \cite[Proposition 4.3]{Lv-2016-4} with a DGP pseudoalgebra and following the similar proof strategy, we next prove that the DG pseudoalgebra $(A^{ue},\ast,\mathfrak{D})$ constructed in Proposition \ref{pro:uni-enve-pseudoalg} is universal with respect to the DGP pseudoalgebra.
Assume that the homomorphism $\phi$ exists. For all homogeneous elements $a \in A$, using the bi-commutativity relations $f = \phi \circ M$ and $g = \phi \circ H$, define an $H$-linear map $\phi:~(A^{ue},\ast,\mathfrak{D})\rightarrow (B,\ast_B,d_B)$ by 
\begin{equation*}
\phi(M_a) = f(a), \quad \phi(H_a) = g(a),
\end{equation*}
where $|g(a)|=|a|+p$.

Firstly, we prove that $\phi$ preserves the relations \eqref{eq:rela-M-pro} $\sim$ \eqref{eq:rela-M-unit} in Definition \ref{def:Uni-enve-pseudoalg}. By ($\mathcal{P}1$), for all homogeneous elements $a,b\in A$, we have
\begin{eqnarray*}
  &&\big((\mathrm{id}\otimes\mathrm{id})\otimes_H\phi\big)\big((1\otimes1)\otimes_HM_{a\cdot b}\big) = (1\otimes1)\otimes_H\phi(M_{a\cdot b})=(1\otimes1)\otimes_Hf(a\cdot_Ab) \\
  &=& f(a)\ast_Bf(b)=\phi(M_a)\ast_B \phi(M_b)=\big((\mathrm{id}\otimes\mathrm{id})\otimes_H\phi\big)(M_a\ast M_b),
\end{eqnarray*}
and $\big((\mathrm{id}\otimes\mathrm{id})\otimes_H\phi\big)\big((1\otimes1)\otimes_HM_{1}\big)=(1\otimes1)\otimes_Hf(1)
=(1\otimes1)\otimes_H1=\big((\mathrm{id}\otimes\mathrm{id})\otimes_H\phi\big)(1\ast1)$, thus $\phi$ preserves \eqref{eq:rela-M-pro} and \eqref{eq:rela-M-unit}. For all homogeneous elements $a,b\in A$, ($\mathcal{P}2$) implies that
\begin{eqnarray*}
  &&\big((\mathrm{id}\otimes\mathrm{id})\otimes_H\phi\big)\big((\mathrm{id}\otimes\mathrm{id})\otimes_HH\big)_{\{a\ast b\}_A} = \big((\mathrm{id}\otimes\mathrm{id})\otimes_H g\big)\{a\ast b\}_A=\{g(a)\ast g(b)\}_B \\
  &=& g(a)\ast_B g(b)-(-1)^{(|a|+p)(|b|+p)}(\sigma\otimes_H \mathrm{id})g(b)\ast_B g(a)\\
  &=& \phi(H_{a})\ast_B \phi(H_{b})-(-1)^{(|a|+p)(|b|+p)}(\sigma\otimes_H\mathrm{id})\phi(H_{b})\ast_B \phi(H_{a})\\
  &=& \big((\mathrm{id}\otimes\mathrm{id})\otimes_H\phi\big)(H_{a}\ast H_{b})-(-1)^{(|a|+p)(|b|+p)}\big((\mathrm{id}\otimes\mathrm{id})\otimes_H\phi\big)\big((\sigma\otimes_H\mathrm{id})H_{b}\ast H_{a}\big),
\end{eqnarray*}
hence $\phi$ preserves \eqref{eq:rela-H-bra}. 

Since
\begin{eqnarray*}
  &&\big((\mathrm{id}\otimes\mathrm{id})\otimes_H\phi\big)\big((\mathrm{id}\otimes\mathrm{id})\otimes_HM\big)_{\{a\ast b\}_A} = \big((\mathrm{id}\otimes\mathrm{id})\otimes_H f\big)\{a\ast b\}_A\\
  &=& g(a)\ast_B f(b)-(-1)^{(|a|+p)|b|}(\sigma\otimes_H \mathrm{id})f(b)\ast_B g(a)\\
  &=& \phi(H_{a})\ast_B \phi(M_{b})-(-1)^{(|a|+p)|b|}(\sigma\otimes_H\mathrm{id})\phi(M_{b})\ast_B \phi(H_{a})\\
  &=& \big((\mathrm{id}\otimes\mathrm{id})\otimes_H\phi\big)(H_{a}\ast M_{b})-(-1)^{(|a|+p)|b|}\big((\mathrm{id}\otimes\mathrm{id})\otimes_H\phi\big)\big((\sigma\otimes_H\mathrm{id})M_{b}\ast H_{a}\big),
\end{eqnarray*}
and
\begin{eqnarray*}
  &&\big((\mathrm{id}\otimes\mathrm{id})\otimes_H\phi\big)\big((1\otimes1)\otimes_HH_{a\cdot b}\big) = (1\otimes1)\otimes_H\phi(H_{a\cdot b})=(1\otimes1)\otimes_Hg(a\cdot_Ab) \\
  &=& f(a)\ast_Bg(b)+(-1)^{|a||b|}f(b)\ast_Bg(a)=\phi(M_a)\ast_B \phi(H_b)+(-1)^{|a||b|}\phi(M_b)\ast_B \phi(H_a)\\
  &=&\big((\mathrm{id}\otimes\mathrm{id})\otimes_H\phi\big)(M_a\ast H_b+(-1)^{|a||b|}M_b\ast H_a),
\end{eqnarray*}
thus, $\phi$ keeps \eqref{eq:rela-M-bra} and \eqref{eq:rela-H-pro} because of ($\mathcal{P}3$) and ($\mathcal{P}4$).

Secondly, we show that $\phi$ commutes with $\mathfrak{D}$ in Proposition \ref{pro:uni-enve-pseudoalg}. For all homogeneous elements $a\in A$, we have
\begin{align*}
(\phi\circ \mathfrak{D})(M_a) &= \phi(M_{d_A(a)}) = f(d_A(a)) = d_B(f(a)) = (d_B\circ\phi)(M_a), \\
(\phi\circ \mathfrak{D})(H_a) &= \phi(H_{d_A(a)}) = g(d_A(a)) = d_B(g(a)) = (d_B\circ\phi)(H_a),
\end{align*}
and it is clear that $\phi$ is unique. Hence the conclusion holds.
\end{proof}

\end{document}